\documentclass[12pt]{article}
\usepackage{amssymb,amsthm,hyperref,amsmath,bm,amsfonts}
\usepackage{arydshln}
\usepackage{verbatim}
\usepackage{graphicx}
\usepackage{float}

\usepackage{subfigure}
\usepackage{multirow}
\usepackage{subfigure}
\usepackage{color}
\usepackage{appendix}

\newtheorem{theorem}{Theorem}[section]
\newtheorem{lemma}[theorem]{Lemma}
\newtheorem{prop}{Proposition}[section]
%ÊÇ·ñÊÇÈ«ÎÄŒÆÊý£¿
\newtheorem{assumption}{Assumption}[section]
\newtheorem{remark}{Remark}[section]
\numberwithin{equation}{section}

\setlength{\textheight}{21.6cm} \setlength{\textwidth}{16cm}
\hoffset=-1.35cm \voffset=-2.0cm

\title{Boundary Conditions for Hyperbolic Relaxation
Systems with Characteristic Boundaries of Type II}
\author{
Yizhou Zhou\thanks{E-mail: zhouyz16@mails.tsinghua.edu.cn}\\
\small{\textit{Department of Mathematical Sciences}},\\
\small{\textit{Tsinghua University, Beijing 100084, China.}}\\
Wen-An Yong\thanks{E-mail: wayong@tsinghua.edu.cn}\\
{\small{\textit{Department of Mathematical Sciences}}},\\
{\small{\textit{Tsinghua University, Beijing 100084, China.}}} \\
}

\date{\today}
\begin{document}
\maketitle{}

\begin{abstract}
This paper is a continuation of our preceding work on hyperbolic relaxation systems with characteristic boundaries of type I. 
Here we focus on the characteristic boundaries of type II, where the boundary is characteristic for the equilibrium system and is non-characteristic for the relaxation system. 
For this kind of characteristic initial-boundary-value problems (IBVPs), we introduce a three-scale asymptotic expansion to analyze the boundary-layer behaviors of the general multi-dimensional linear relaxation systems. 
Moreover, we derive the reduced boundary condition under the Generalized Kreiss Condition by resorting to some subtle matrix transformations and the perturbation theory of linear operators.
The reduced boundary condition is proved to satisfy the Uniformed Kreiss Condition for characteristic IBVPs. 
Its validity is shown through an error estimate involving the Fourier-Laplace transformation and an energy method based on the structural stability condition. 
\end{abstract}

\hspace{-0.5cm}\textbf{Keywords:}
\small{hyperbolic relaxation problems, boundary conditions, characteristic boundaries,
 Kreiss condition, energy estimate, Laplace transform}\\

\hspace{-0.5cm}\textbf{AMS subject classification:} \small{35L50, 76N20}

%%%% Start %%%%%%
\newpage
\section{Introduction}\label{section1}

This paper is a continuation of our preceding work \cite{ZY} on hyperbolic relaxation systems with characteristic boundaries of type I. Now we consider other characteristic boundary conditions for multi-dimensional linear hyperbolic relaxation systems
\begin{equation}\label{1.1}
  U_t+\sum_{j=1}^{d}A_j U_{x_j}=\frac{QU}{\epsilon}.
\end{equation}
Here $U\in \mathbb{R}^n$ is unknown, $A_j$ ($j=1,2,...,d$) and $Q$ are given $n\times n$ constant matrices, and $\epsilon$ is a small positive parameter called relaxation time. 
Such equations can be understood as a linearization of certain partial differential equations which model a large number of irreversible phenomena. 
They arise in the kinetic theory \cite{Ga,Le,CFL,Mi}, thermal non-equilibrium flows \cite{Ze}, chemically reactive flows \cite{GM}, compressible viscoelastic flows \cite{Y3,CS}, non-equilibrium thermodynamics \cite{JCL,Mu,ZHYY}, traffic flows \cite{BK1,BK2}, and so on. 
We are interested in the limit as $\epsilon$ goes to zero---the so-called zero relaxation limit.

% They arise in the kinetic theory, chemically reactive fluid dynamics, traffic flow, radiation hydrodynamics, geophysics, nonlinear optics, non-equilibrium thermodynamics, and so on. See \cite{BK1,BK2,GM,JCL,Mu,ZHYY} and the references cited therein.

As in \cite{ZY}, we assume that the system \eqref{1.1} satisfies the structural stability conditions proposed in \cite{Y1}. It was shown in \cite{Y3} that the stability condition is quite reasonable and is respected by almost all relaxation models. Under this condition, it was proved in \cite{Y1} that, as $\epsilon$ goes to zero, the solution $U^\epsilon$ of the initial-value problem converges to that of the corresponding equilibrium system. Here the equilibrium system is a hyperbolic system of first-order partial differential equations and governs the limit (See Assumption \ref{Asp2.1} in Section \ref{section2}). 
For initial-boundary-value problems (IBVPs) of \eqref{1.1} which is defined only for the half spatial domain $x_1>0$, proper boundary conditions should be prescribed on the boundary $x_1=0$. Classically, it is important to distinguish that the boundary is characteristic or not \cite{MO, Kr, Y4}. 
Here, the boundary $x_1=0$ is said to be non-characteristic for the hyperbolic system \eqref{1.1} if the coefficient matrix $A_1$ is invertible. Otherwise, it is said to be characteristic.

In \cite{Y2}, the second author systematically studied the formulation of boundary conditions for the relaxation problems under the assumption that the boundary is non-characteristic for both the relaxation system and its equilibrium system. Particularly, he proposed the Generalized Kreiss Condition (GKC) which is essentially necessary to have a well-behaved relaxation limit, derived the so-called reduced boundary condition (satisfied by the limit), and showed the well-posedness of the reduced boundary condition together with the corresponding equilibrium system. 
% Starting with \cite{ZY}, we try to remove or weaken the non-characteristic hypothesis and study the IBVPs for relaxation systems with characteristic boundaries. 
In \cite{ZY}, we considered the characteristic boundaries of type I, where the boundary is non-characteristic for the equilibrium system and is characteristic for the relaxation system. For this kind of characteristic IBVPs, we modified the GKC, derived the reduced boundary condition, and verified the validity of the reduced boundary condition.

% \begin{enumerate}
%   \item[(I)] The boundary is non-characteristic for the equilibrium system and is characteristic for the relaxation system;

%   \item[(II)] The boundary is characteristic for the equilibrium system and is non-characteristic for the relaxation system 
% \end{enumerate} 
In the present work, we focus on the characteristic boundaries of type II, where the boundary is characteristic for the equilibrium system and is non-characteristic for the relaxation system. 
For this kind of characteristic IBVPs, the original GKC in \cite{Y2} is still valid. However, the arguments in \cite{Y2} have to be re-examined since the boundary is characteristic for the equilibrium system. In particular, we are motivated by \cite{XX} for a specific system and introduce a three-scale ($x_1,x_1/\sqrt{\epsilon},x_1/\epsilon$) asymptotic expansion to analyze the boundary-layer behaviors of the general multi-dimensional linear relaxation systems. Moreover, we derive the reduced boundary condition under the GKC by resorting to some subtle matrix transformations and the perturbation theory of linear operators \cite{Ka}.
To show the well-posedness, the reduced boundary condition is proved to satisfy the Uniformed Kreiss Condition proposed in \cite{MO} for characteristic IBVPs.
%which is the equilibrium system here.
At last, the validity of the reduced boundary condition is shown through an error estimate involving the Fourier-Laplace transformation \cite{GKO} and an energy method based on the structural stability condition.
% it will be seen that the main issues are different from those in the previous case. 
% % On IBVPs of the relaxation system \eqref{1.1}, the first systematical work is \cite{Y2}. 
% Unlike the characteristic case (I) where the definition of GKC breaks down, we find that the standard GKC in \cite{Y2} can still be defined in the characteristic case (II) since $A_1$ is invertible. A natural question arises: Is the GKC still sufficient to derive the well-posed reduced boundary condition for the equilibrium system? Our answer to this question is yes! Clearly, the proof in \cite{Y2} is no longer valid since the boundaries are characteristic for the equilibrium system. 

% The main task of the present work is to develop a method to find the boundary condition satisfied by the limit and prove its well-posedness together with the equilibrium system. In order to show the well-posedness, we try to prove that the reduced boundary condition satisfies the Uniformed Kreiss condition (with characteristic boundaries) defined in \cite{MO} where the classical theory of IBVPs for the first-order hyperbolic systems with characteristic boundaries was developed. 

Here are three interesting corollaries of our results. 
(1) The three scales in the above expansion degenerate to two scales, namely there is no boundary-layer with length $\epsilon$, if the number of positive eigenvalues of $A_1$ equals to the number of the non-negative eigenvalues of the corresponding coefficient matrix for the equilibrium system. A specific example can be found in \cite{ZWF-Yong}.
(2) Surprisingly, our reduced boundary condition does not involve the characteristic modes corresponding to the zero eigenvalue. This property was shown in \cite{MO} to be crucial for the well-posedness of the characteristic IBVPs. 
(3) The multiplicity of the zero eigenvalue of the corresponding coefficient matrix for the equilibrium system is not larger than the number of non-equilibrium variables ($=$ the rank of the matrix $Q$). This can also be deduced from the well-known sub-characteristic condition for hyperbolic relaxation systems \cite{CLL,Y7}. 

The literature for IBVPs of the general relaxation systems seems quite rare. Except for \cite{Y2,ZY}, other works seem all for specific models. 
In \cite{XX}, 
% the authors also met the characteristic boundaries of type II in dealing with the IBVPs of a specific linear relaxation model (Jin-Xin model). 
by using a three-scale asymptotic expansion and the Laplace transform, the authors analyzed 
the boundary-layer behaviors and the asymptotic convergence for 
a specific linear model (Jin-Xin model) with constant coefficients. 
The similar argument was applied in \cite{Xu2} to a linearized multi-dimensional model proposed in \cite{KT}. 
% In \cite{Xu2}, with the same spirit as that of \cite{XX}, the author studied the boundary conditions for a linearized multidimensional relaxation model proposed by Katsoulakis and Tzavaras. 
In \cite{CH}, the authors studied a specific IBVP for the one-dimensional Kerr-Debye model in nonlinear optics. They justified the zero relaxation limit by exploiting the energy method and the entropy structure of the model. 
Recently, a nonlinear discrete-velocity model for traffic flows was investigated in \cite{BK1} and the reduced boundary condition was obtained by solving a boundary Riemann problem. 
In \cite{ZWF-Yong}, the authors studied a simple moment closure system with characteristic boundaries. 
Unlike these works, the present paper is devoted to studying the IBVPs for general multi-dimensional linear relaxation systems. 
The interested reader is referred to \cite{WX,XX2,Xu1,ZY2} for further works in this direction.

This paper is organized as follows. Some notations and basic assumptions are listed in Section \ref{section2}. In Section \ref{section3}, we present a three-scale formal asymptotic expansion for the IBVPs of multi-dimensional linear relaxation system with characteristic boundaries of type II. The reduced boundary condition is derived in Section \ref{section4}. In Section \ref{section5}, we prove the validity of the reduced
boundary condition. Some details are given in the Appendix.

\section{Preliminaries}\label{section2}

Consider initial-boundary-value problems (IBVPs) of multi-dimensional linear hyperbolic systems
\begin{eqnarray}\label{2.1}
\left\{{\begin{array}{*{20}l}
  \vspace{2mm}U_t+\sum\limits_{j=1}^d A_jU_{x_j}= QU/\epsilon,\qquad x_1>0,\ \hat{x}=(x_2,...,x_d) \in \mathbb{R}^{d-1},\ t>0,\\[3mm]
  BU(t,0,\hat{x})=b(t,\hat{x})
\end{array}}\right.
\end{eqnarray}
with $B$ a constant matrix. Throughout this paper, the coefficient matrix $A_1$ is assumed to be invertible. That is, the boundary $x_1=0$ is non-characteristic for the system \eqref{2.1}. 
Denote
$$
n^+ = ~\text{the number of positive eigenvalues of} ~A_1.
$$ 
By the classical theory of IBVPs for hyperbolic systems \cite{BS,GKO}, the boundary matrix $B$ should be a full-rank $n^+\times n$-matrix. 
As to the system \eqref{2.1}, we postulate that it satisfies the structural stability condition proposed in \cite{Y1} and the boundary condition satisfies the Generalized Kreiss Condition (GKC) proposed in \cite{Y2}. For the convenience of readers, we recall these conditions as follows. 
%introduce the GKC and the structural stability condition successively. 

% \begin{remark}
% Notice that Lemma 3.1 in \cite{Y2} is also valid if the boundary is non-characteristic for the relaxation system. It implies that the GKC is a necessary condition for the existence of zero relaxation limit in studying IBVPs.
% \end{remark}

Firstly, we recall the structural stability condition only for our linear system \eqref{2.1}, while it was proposed in \cite{Y1} for nonlinear problems: 
% It is remarkable that the standard structural stability condition in \cite{Y1} is proposed for nonlinear problems but here we only exhibit the condition for \eqref{2.1}: 

(i) There is an invertible $n\times n$ matrix $P$ and an invertible $r\times r$ matrix $S$ 
such that 
$$
PQ=\left({\begin{array}{*{20}c}
  \vspace{1.5mm}0 & 0 \\
  0 & S
  \end{array}}\right)P.
$$

(ii) There exists a symmetric positive definite matrix $A_0$, called the symmetrizer, such that each $A_0A_j$ ($j=1,2,...,d$) is symmetric. 

(iii) The hyperbolic part and the source term are coupled in the following sense
$$
A_0Q+Q^*A_0\leq -P^*\left({\begin{array}{*{20}c}
  \vspace{1.5mm}0 & 0 \\
  0 & I_r
  \end{array}}\right)P.
$$
Here and below the superscript $*$ means transpose of matrices or vectors.

On this condition, we comment as follows. Under Condition (i), we can introduce $V=PU$ to obtain a system for $V$. Thus we may as well assume that $P=I_n$ and $Q=\text{diag}(0_{n-r}, S)$ in \eqref{2.1}. 
Moreover, Condition (iii) implies that $S$ is an $r\times r$ stable matrix and
%By Theorem 2.2 in \cite{Y1}, the structural stability condition implies that 
$A_0$ is a block-diagonal matrix of the form $A_0=\text{diag}(A_{01}, A_{02})$ with the partition of $Q$ (see Theorem 2.2 in \cite{Y1}).  
Furthermore, we can assume that the matrices $A_j\ (j=1,2,...,d)$ are symmetric and the symmetrizer $A_0$ equals to $I_n$. 
In fact, with $\tilde{U} = A_0^{1/2}U$, the system \eqref{2.1} can be written as
%if $A_j$ and $Q$ are not symmetric, we can introduce a change of variables 
%and multiply the equation in \eqref{2.1} with $A_0^{1/2}$ from the left to obtain  
\begin{equation}\label{2.2}
	\tilde{U}_t + \sum_{j=1}^d\tilde{A}_j\tilde{U}_{x_j} = \frac{1}{\epsilon}\tilde{Q}\tilde{U}, 
\end{equation}
where $\tilde{A}_j=A_0^{-1/2}(A_0A_j)A_0^{-1/2}$ is symmetric and $\tilde{Q}= A_0^{-1/2}(A_0Q)A_0^{-1/2}=\text{diag}(0, \tilde{S})$ with $\tilde{S}=A_{02}^{-1/2}(A_{02}S)A_{02}^{-1/2}$ negative definite. 
This $\tilde{Q}$ is also symmetric under the further assumption that 
% negative definite matrix. 
% Additionally, we also assume that the symmetrizer $A_0$ and the source term $Q$ satisfy
\begin{equation}\label{2.3}
  A_0 Q =Q^* A_0, 
\end{equation} 
which is quite reasonable and is respected by almost all relaxation models as shown in \cite{Y3}.
In summary, we assume throughout this paper that the matrices $A_j$ in \eqref{2.1} are symmetric, $A_0$ is the unit matrix $I_n$, and $Q=\text{diag}(0,S)$ with $S$ a symmetric negative definite matrix. Correspondingly, we write  
$$
A_1 = \left({\begin{array}{*{20}c}
  \vspace{1.5mm}A_{11} & A_{12} \\
  A_{12}^* & A_{22}
  \end{array}}\right), \quad A_j = \left({\begin{array}{*{20}c}
  \vspace{1.5mm}A_{j11} & A_{j12} \\
  A_{j12}^* & A_{j22}
  \end{array}}\right),\quad
U=\left({\begin{array}{*{20}c}
  \vspace{1.5mm}u \\
                v
  \end{array}}\right), \quad B=(B_u,B_v).
$$

About the GKC, we define
%Firstly, we recall the definition of GKC in \cite{Y2}. Since $A_1$ is invertible, we can denote the matrix 
$$
M=M(\xi,\omega,\eta) = A_1^{-1}\left(\eta Q - \xi I_n - i \sum_{j=2}^d \omega_j A_j\right).
$$
Here $\eta$ is a nonnegative real number, $\xi $ is a complex number satisfying $Re\xi > 0$, and $\omega=(\omega_2,...,\omega_d)\in \mathbb{R}^{d-1}$. Let $R_M^S(\xi,\omega,\eta)$ denote the right-stable matrix for $M=M(\xi,\omega,\eta)$ \cite{Y2}. By Lemma 2.3 in \cite{Y2}, the matrix $M(\xi,\omega,\eta)$ has $n^+$ stable eigenvalues and thereby $R_M^S(\xi,\omega,\eta)$ is an $n\times n^+$-matrix.
With this notation, the GKC can be stated as\\

\textbf{Generalized Kreiss Condition:} There exists a constant $c_K>0$ such that 
\begin{align*}
|\det\{BR_M^S(\xi,\omega,\eta)\}|\geq c_K \sqrt{\det\{R_M^{S*}(\xi,\omega,\eta)R_M^S(\xi,\omega,\eta)\}}
\end{align*}
for all $\eta\geq 0$, $\omega\in \mathbb{R}^{d-1}$ and $\xi$ with $Re\xi>0$.

At this point, we remark that the new problem \eqref{2.2} with boundary condition 
\begin{equation*} 
	b(\hat{x},t)=\left[BA_0^{-1/2}\right]\left[A_0^{1/2}U(0,\hat{x},t)\right]\equiv\tilde{B}\tilde{U}(0,\hat{x},t)
\end{equation*}
satisfies the GKC if so does the original problem \eqref{2.1}. Indeed, for \eqref{2.2} we have
% Indeed, by introducing $\tilde{B}=BA_0^{-1/2}$, the boundary condition in \eqref{2.1} can be expressed as
\begin{align*}
\tilde{M}(\xi,\omega,\eta) :=& \tilde{A}_1^{-1}\left(\eta \tilde{Q}-\xi I_n - i\sum_{j=2}^d  \omega_j \tilde{A}_j\right) \\ 
=& A_0^{1/2}A_1^{-1}A_0^{-1/2}\left(\eta A_0^{1/2}QA_0^{-1/2}-\xi I_n-i\sum_{j=2}^d  \omega_j A_0^{1/2}A_j A_0^{-1/2}\right)\\
=& A_0^{1/2}A_1^{-1}\left(\eta Q-\xi I_n- i\sum_{j=2}^d  \omega_j A_j\right)A_0^{-1/2}.
\end{align*}
Then it is clear that $\tilde{R}_M^S(\xi,\omega,\eta)=A_0^{1/2}R_M^S(\xi,\omega,\eta)$ is a right-stable matrix for $\tilde{M}(\xi,\omega,\eta)$. Since the problem \eqref{2.1} satisfies the GKC, we deduce from Lemma 3.3 in \cite{Y2} that
\begin{align*}
|\det\{\tilde{B}\tilde{R}_M^S(\xi,\omega,\eta)\}|=&|\det\{BR_M^S(\xi,\omega,\eta)\}| \geq c_K \sqrt{\det\{R_M^{S*}(\xi,\omega,\eta)R_M^S(\xi,\omega,\eta)\}} \\[2mm]
=& c_K \sqrt{\det\{\tilde{R}_M^{S*}(\xi,\omega,\eta)A_0^{-1}\tilde{R}_M^S(\xi,\omega,\eta)\}}\\[2mm]
\geq & c_K c_A \sqrt{\det\{\tilde{R}_M^{S*}(\xi,\omega,\eta)\tilde{R}_M^S(\xi,\omega,\eta)\}}
\end{align*}
with $c_A$ a positive constant. This is exactly the GKC for the new problem.

At last, we make the following assumption 
\begin{assumption}\label{Asp2.1}
The boundary $x_1=0$ is characteristic for the corresponding equilibrium system 
$$
u_t+A_{11}u_{x_1}+\sum_{j=2}^dA_{j11}u_{x_j}=0.
$$
Namely, the matrix $A_{11}$ has zero eigenvalues while the matrix $A_1$ is invertible.
\end{assumption}
\noindent Note that $A_{11}$ and $A_{j11}$ are all symmetric under the structural stability condition. With this assumption, we denote 
\begin{align*}
n_1^+ =& ~\text{the number of positive eigenvalues of} ~A_{11},\\[2mm]
n_1^o =& ~\text{the number of zero eigenvalues of} ~A_{11}.
\end{align*}

In summary, our assumptions for the IBVP \eqref{2.1} are the structural
stability condition, the additional assumption \eqref{2.3}, the GKC, and Assumption \ref{Asp2.1}.

\section{Asymptotic Expansion}\label{section3}
To see the behavior of system \eqref{2.1} in the zero relaxation limit, we formally expand its solution in this section. 
Following the three-scale asymptotic expansion in \cite{XX}, we introduce 
$$
y=\frac{x_1}{\epsilon}, \quad z=\frac{x_1}{\sqrt{\epsilon}}
$$ 
and consider an approximate solution of the form
\begin{align}
U_\epsilon(t,x_1,\hat{x})=&
\left({\begin{array}{*{20}c}
  \vspace{1.5mm}\bar{u} \\
  \bar{v}
\end{array}}\right)(t,x_1,\hat{x})
+\left({\begin{array}{*{20}c}
  \vspace{1.5mm}\mu \\
  \nu
\end{array}}\right)(t,y,z,\hat{x};\epsilon)\nonumber\\[2mm]
\equiv&\left({\begin{array}{*{20}c}
  \vspace{1.5mm}\bar{u} \\
  \bar{v}
\end{array}}\right)(t,x_1,\hat{x})+
\left({\begin{array}{*{20}c}
  \vspace{1.5mm}\mu_0 \\
  \nu_0
\end{array}}\right)(t,y,\hat{x})+
\left({\begin{array}{*{20}c}
  \vspace{1.5mm}\mu_1 \\
  \nu_1
\end{array}}\right)(t,z,\hat{x})+\sqrt{\epsilon}
\left({\begin{array}{*{20}c}
  \vspace{1.5mm}\mu_2 \\
  \nu_2
\end{array}}\right)(t,z,\hat{x}).\label{3.1}
\end{align}
The rest of this section consists of three subsections devoted to determining the four expansion terms (coefficients).

\subsection{Equations for the expansion coefficients}\label{subsection3.1}
By referring to \cite{XX,Y2}, it is easy to see that the outer solution $(\bar{u},\bar{v})$ satisfies
\begin{gather}
\bar{u}_{t}+A_{11}\bar{u}_{x_1}+\sum_{j=2}^dA_{j11}\bar{u}_{x_j}=0,\label{3.2}\\[2mm]
\bar{v}=0.\label{3.3}
\end{gather}
For further discussion, we denote by $\mathcal{L}$ the differential operator 
$$
\mathcal{L}(U):=\partial_{t}U +\sum\limits_{j=1}^d A_j\partial_{x_j}U-QU/\epsilon.
$$

To determine the boundary-layer correction $(\mu,\nu)$ which satisfies 
\begin{equation}\label{tem3.4}
	(\mu_0,\nu_0)(t,\infty,\hat{x})=(\mu_1,\nu_1)(t,\infty,\hat{x})=(\mu_2,\nu_2)(t,\infty,\hat{x})=0,
\end{equation}
we notice that the approximation solution $U_\epsilon$ should make  
$$
\mathcal{L}(U_\epsilon)=\mathcal{L}\left({\begin{array}{*{20}c}
  \vspace{1.5mm}\bar{u}\\
  \bar{v}
\end{array}}\right)+\mathcal{L}\left({\begin{array}{*{20}c}
  \vspace{1.5mm}\mu\\
  \nu
\end{array}}\right)
$$
as small as possible. Note that the first term is
% with $(\bar{u},\bar{v})$ determined by these equations, we have  
\begin{align} 
\mathcal{L}\left({\begin{array}{*{20}c}
  \vspace{1.5mm}\bar{u}\\
  \bar{v}
\end{array}}\right)=\left({\begin{array}{*{20}c}
  \vspace{1.5mm}0 \\
  A_{12}^*\partial_{x_1}\bar{u}+\sum\limits_{j=2}^d A_{j12}^*\partial_{x_j}\bar{u}
  \end{array}}\right)=\left({\begin{array}{*{20}c}
  \vspace{1.5mm}0 \\
  O(1)
  \end{array}}\right) \label{tem3.5}
\end{align}
due to \eqref{3.2} and \eqref{3.3}.
Thus we compute 
\begin{align*}
\mathcal{L}\left({\begin{array}{*{20}c}
  \vspace{1.5mm}\mu\\
  \nu
\end{array}}\right)
=&~\partial_t\left({\begin{array}{*{20}c}
  \vspace{1.5mm}\mu \\
  \nu
  \end{array}}\right)+
\frac{1}{\epsilon}\left({\begin{array}{*{20}c}
  \vspace{1.5mm}A_{11} & A_{12} \\
  A_{12}^* & A_{22}
  \end{array}}\right)
\partial_y\left({\begin{array}{*{20}c}
  \vspace{1.5mm}\mu \\
  \nu
  \end{array}}\right) 
+\frac{1}{\sqrt{\epsilon}}\left({\begin{array}{*{20}c}
  \vspace{1.5mm}A_{11} & A_{12} \\
  A_{12}^* & A_{22}
  \end{array}}\right)
\partial_z\left({\begin{array}{*{20}c}
  \vspace{1.5mm}\mu \\
  \nu
  \end{array}}\right) \\[2mm]
&+ \sum_{j=2}^d \left({\begin{array}{*{20}c}
  \vspace{1.5mm}A_{j11} & A_{j12} \\
  A_{j12}^* & A_{j22}
  \end{array}}\right)\partial_{x_j}\left({\begin{array}{*{20}c}
  \vspace{1.5mm}\mu \\
  \nu
  \end{array}}\right)
-\frac{1}{\epsilon}
\left({\begin{array}{*{20}c}
  \vspace{1.5mm}0 & 0 \\
  0 & S
  \end{array}}\right)
\left({\begin{array}{*{20}c}
  \vspace{1.5mm}\mu \\
  \nu
  \end{array}}\right). 
\end{align*}
By letting the coefficients of $\epsilon^{-1}$, $\epsilon^{-1/2}$ and $\epsilon^0$ in the above expression be zero, we obtain
\begin{eqnarray}\label{tem3.6}
\left\{{\begin{array}{*{20}l}
\epsilon^{-1}:~~\left({\begin{array}{*{20}c}
  \vspace{1.5mm}A_{11} & A_{12} \\
  A_{12}^* & A_{22}
  \end{array}}\right)\partial_y\left({\begin{array}{*{20}c}
  \vspace{1.5mm}\mu_0 \\
  \nu_0
  \end{array}}\right) =\left({\begin{array}{*{20}c}
  \vspace{1.5mm}0 & 0 \\
  0 & S
  \end{array}}\right)
  \left({\begin{array}{*{20}c}
  \vspace{1.5mm}\mu_0+\mu_1 \\
  \nu_0+\nu_1
  \end{array}}\right),\\[8mm]
\epsilon^{-1/2}:~\left({\begin{array}{*{20}c}
  \vspace{1.5mm}A_{11} & A_{12} \\
  A_{12}^* & A_{22}
  \end{array}}\right)\partial_z\left({\begin{array}{*{20}c}
  \vspace{1.5mm}\mu_1 \\
  \nu_1
  \end{array}}\right) =\left({\begin{array}{*{20}c}
  \vspace{1.5mm}0 & 0 \\
  0 & S
  \end{array}}\right)
  \left({\begin{array}{*{20}c}
  \vspace{1.5mm}\mu_2 \\
  \nu_2
  \end{array}}\right),\\[8mm]
  \epsilon^0:\qquad \partial_t\mu_{0}+\partial_t\mu_{1}+A_{11}\partial_z\mu_{2}+A_{12}\partial_z\nu_{2}\\
\qquad\qquad\qquad+\sum\limits_{j=2}^d\big[A_{j11}\partial_{x_j}(\mu_0+\mu_1)+A_{j12}\partial_{x_j}(\nu_0+\nu_1)\big]=0.
\end{array}}\right.
\end{eqnarray}
Here only the first $(n-r)$ rows of the coefficient of $\epsilon^0$ are considered, which is similar to \eqref{tem3.5}.
Motivated by \cite{XX}, we set  
%the first two relations in \eqref{tem3.6}, we set
\begin{align}
\nu_1=&~ 0. \label{tem3.7} 
\end{align}
Then the first two relations in \eqref{tem3.6} become
\begin{align}
\left({\begin{array}{*{20}c}
  \vspace{1.5mm}A_{11} & A_{12} \\
  A_{12}^* & A_{22}
  \end{array}}\right)\partial_y\left({\begin{array}{*{20}c}
  \vspace{1.5mm}\mu_0 \\
  \nu_0
  \end{array}}\right)=&\left({\begin{array}{*{20}c}
  \vspace{1.5mm}0 & 0 \\
  0 & S
  \end{array}}\right)
\left({\begin{array}{*{20}c}
  \vspace{1.5mm}\mu_0 \\
  \nu_0 
  \end{array}}\right),\label{tem3.8}\\[2mm]
A_{11}\partial_z\mu_1=&~ 0, \label{tem3.9}\\[2mm]
A_{12}^*\partial_z\mu_1=&~ S\nu_2.\label{tem3.10}
\end{align}

Having these equations, we show how to determine the expansion coefficients in \eqref{3.1}. Firstly, $(\mu_0,\nu_0)$ is simply determined by the ordinary differential equations \eqref{tem3.8}, which will be further discussed in Proposition \ref{prop3.3} below. 
In order to obtain $\mu_1$, we recall Assumption \ref{Asp2.1} that the symmetric matrix $A_{11}$ has $n_1^o$ zero eigenvalues and $(n-r-n_1^o)$ nonzero eigenvalues. Then there exist an $(n-r)\times (n-r-n_1^o)$-matrix $P_1$ and an $(n-r)\times n_1^o$-matrix $P_0$ such that $(P_1,P_0)$ is an orthonormal matrix satisfying
\begin{align}\label{tem3.11}
\left({\begin{array}{*{20}c}
  \vspace{1.5mm}P_1^* \\
  P_0^*
  \end{array}}\right)A_{11}(P_1, P_0)=\left({\begin{array}{*{20}c}
  \vspace{1.5mm}\Lambda_1 & 0 \\
  0 & 0
  \end{array}}\right),\qquad 
(P_1, P_0)\left({\begin{array}{*{20}c}
  \vspace{1.5mm}P_1^* \\
  P_0^*
  \end{array}}\right)=I_{n-r}.
\end{align} 
Here $\Lambda_1$ is an invertible diagonal $(n-r-n_1^o)\times (n-r-n_1^o)$-matrix. Multiplying \eqref{tem3.9} with $P_1^*$ from the left yields 
\begin{align*}
0=P_1^*A_{11}\partial_z\mu_1=P_1^*A_{11}(P_1P_1^*+P_0P_0^*)\partial_z\mu_1=\Lambda_1\partial_z(P_1^*\mu_1).
\end{align*}
Since $\mu_1(t,\infty,\hat{x})=0$, we have
\begin{equation}\label{tem3.12}
	P_1^*\mu_1\equiv0.
\end{equation}
Note that $\mu_1=P_1P_1^*\mu_1+P_0P_0^*\mu_1=P_0P_0^*\mu_1$.

In order to obtain $P_0^*\mu_1$, we use \eqref{tem3.7} and rewrite the third equation in \eqref{tem3.6} as
\begin{equation}
 \partial_t\mu_{1}+A_{11}\partial_z\mu_{2}+A_{12}\partial_z\nu_{2} 
 +\sum\limits_{j=2}^d A_{j11}\partial_{x_j} \mu_1 = G\label{tem3.13}
\end{equation}
with 
$$
G=G(\mu_0,\nu_0)=-\left[\partial_t\mu_0+\sum\limits_{j=2}^d(A_{j11}\partial_{x_j} \mu_0+A_{j12}\partial_{x_j} \nu_0)\right].
$$
% \begin{remark}
% Here we use the facts $\partial_{x_k}(\mu_0,\nu_0)=0$ and $\nu_1=0$. 
% Comparing \eqref{311} and the third equation in \eqref{tem3.6}, we find that the term $\partial_t\mu_0$ is deleted. In Section \ref{section5}, we will show that the asymptotic solution obtained by this way is valid through an error estimate.
% \end{remark}
Multiplying \eqref{tem3.13} with $P_0^*$ from the left and using \eqref{tem3.10}, we have 
\begin{equation*}
\partial_t(P_0^*\mu_1)+P_0^*A_{12}S^{-1}A_{12}^*\partial_{zz}\mu_1+\sum\limits_{j=2}^d P_0^*A_{j11}\partial_{x_j} \mu_1= P_0^*G. 
\end{equation*} 
Because $\mu_1=P_0P_0^*\mu_1+P_1P_1^*\mu_1=P_0P_0^*\mu_1$ due to \eqref{tem3.12}, the last equation can be written as
\begin{align}\label{tem3.14}
\partial_t(P_0^*\mu_1)+\left[P_0^*A_{12}S^{-1}A_{12}^*P_0\right]\partial_{zz}(P_0^*\mu_1)+\sum\limits_{j=2}^d (P_0^*A_{j11}P_0) \partial_{x_j} (P_0^*\mu_1)=P_0^*G.
\end{align}
This differential equation will be further discussed in Proposition \ref{prop3.2} below. Once it is solved, we obtain $\mu_1$ and $\nu_2$ by \eqref{tem3.10} for $S$ is invertible. 

It remains to determine $\mu_2$. Multiplying \eqref{tem3.13} with $P_1^*$ from the left and using \eqref{tem3.12} yield
\begin{align*}
P_1^*G=&~ P_1^*A_{11}\partial_z \mu_2+ P_1^*A_{12}S^{-1}A_{12}^*\partial_{zz}\mu_1+\sum\limits_{j=2}^d P_1^*A_{j11}\partial_{x_j} \mu_1 \\[1mm]
 =&~\Lambda_1P_1^*\partial_z \mu_2 + P_1^*A_{12}S^{-1}A_{12}^*\partial_{zz}\mu_1+\sum\limits_{j=2}^d P_1^*A_{j11}\partial_{x_j} \mu_1.
\end{align*}
With \eqref{tem3.4}, we integrate the last equation to obtain
\begin{equation}\label{tem3.15}
	P_1^* \mu_2 = -\Lambda_1^{-1}\left[ P_1^*A_{12}S^{-1}A_{12}^*\partial_{z}\mu_1 - \sum\limits_{j=2}^d P_1^*A_{j11} \int_{z}^{\infty} \partial_{x_j} \mu_1+\int_{z}^{\infty}P_1^*G\right].
\end{equation}
This gives $P_1^*\mu_2$ once $(\mu_0,\nu_0)$ and $\mu_1$ are determined. Note that it is not necessary for our purpose to determine $\mu_2$ itself. 
%If $(\mu_0,\nu_0)$ and $\mu_1$ are given, we can get $\mu_2$ by solving \eqref{tem3.15}.  

In summary, the expansion coefficients in \eqref{3.1} are determined in the following way: 
\begin{eqnarray*}
\left\{{\begin{array}{*{20}l}
  \text{\eqref{3.3}}~\Rightarrow~\bar{v}=0 ,\\[3mm]
  \text{\eqref{tem3.7}}~\Rightarrow~\nu_1=0,\\[3mm]
  \text{\eqref{tem3.12}}\Rightarrow P_1^*\mu_1=0,
\end{array}}\right.\qquad\left\{{\begin{array}{*{20}l}
  \text{\eqref{3.2}}~\Rightarrow~\bar{u},\\[3mm]
  \text{\eqref{tem3.8}}~\Rightarrow~(\mu_0,\nu_0),\\[3mm]
  \text{\eqref{tem3.14}}\Rightarrow~P_0^*\mu_1,
\end{array}}\right.\qquad
\left\{{\begin{array}{*{20}l}
  \text{\eqref{tem3.15}}\Rightarrow~P_1^*\mu_2 ,\\[3mm]
  \text{\eqref{tem3.10}}\Rightarrow~\nu_2.
\end{array}}\right.
\end{eqnarray*}
The second set of equations \eqref{3.2}, \eqref{tem3.8}, and \eqref{tem3.14} will be further discussed in the following two subsections.

\subsection{Solvability}\label{subsection3.2}
Now we analyze the solvability of the equations \eqref{3.2}, \eqref{tem3.8} and \eqref{tem3.14}. 
%The results are exhibited in Proposition \ref{prop3.1}-\ref{prop3.3}, respectively.
For \eqref{3.2}, we refer to Section \ref{section2} and state 
% we refer to \cite{Y1} and 
% due to the classical theory of IBVPs for first-order hyperbolic systems, we have
\begin{prop}\label{prop3.1}
The system \eqref{3.2} is symmetrizable hyperbolic, the coefficient $A_{11}$ has $n_1^+$ positive eigenvalues, and zero is an eigenvalue of $A_{11}$ with multiplicity $n_1^o$. 
% To solve the equation \eqref{3.2}, $n_1^+$ boundary conditions should be prescribed.
\end{prop}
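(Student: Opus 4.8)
The plan is to verify the three assertions by working entirely within the symmetric setting fixed in Section~\ref{section2}, where all $A_j$ are symmetric and $Q=\diag(0,S)$. First I would observe that $A_{11}$, being the upper-left $(n-r)\times(n-r)$ block of the symmetric matrix $A_1$, is itself symmetric; likewise each $A_{j11}$ is symmetric. Hence \eqref{3.2} is a constant-coefficient first-order system whose coefficient matrices $A_{11},A_{211},\dots,A_{d11}$ are all real symmetric, so it is symmetrizable hyperbolic with symmetrizer $I_{n-r}$ (indeed symmetric hyperbolic). This takes care of the first claim and requires no real work beyond pointing to the block structure.

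The statement about the multiplicity $n_1^o$ of the zero eigenvalue of $A_{11}$ is immediate: it is simply the defining quantity from Assumption~\ref{Asp2.1}, where $n_1^o$ was introduced as ``the number of zero eigenvalues of $A_{11}$,'' and the fact that $A_{11}$ actually has a zero eigenvalue (i.e.\ $n_1^o\ge 1$) is exactly the content of Assumption~\ref{Asp2.1}. So nothing is to be proved there either; one only restates it for the record.

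The one assertion with genuine content is that $A_{11}$ has exactly $n_1^+$ positive eigenvalues, where $n_1^+$ was defined in Assumption~\ref{Asp2.1} as the number of positive eigenvalues of $A_{11}$ — so again this is true by definition, and what the proposition is really flagging is a consistency fact worth isolating: the count $n_1^+$ is compatible with the non-characteristic hypothesis on $A_1$. If one wants to say something non-tautological here, the point to make is that since $A_1$ is invertible it has $n^+$ positive and $n-n^+$ negative eigenvalues (no zero eigenvalue), and one can ask how $n_1^+,n_1^o$ relate to $n^+$. I would record the elementary inequality coming from the interlacing/Cauchy theorem for the principal submatrix $A_{11}$ of the symmetric matrix $A_1$: the number of zero eigenvalues of $A_{11}$ satisfies $n_1^o\le r$ (this is corollary (3) stated in the introduction, following from the fact that $A_1$ is invertible so that the null directions of $A_{11}$ must be ``rotated out'' by the off-diagonal block $A_{12}$, whose rank is at most $r$), and that $n_1^+\le n^+$. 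These follow from Cauchy interlacing applied to $A_1$ and its leading block $A_{11}$.

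In short, I expect there to be no real obstacle: the proposition is essentially a bookkeeping statement assembling the symmetry of the blocks (from the structural stability normalization) with the definitions in Assumption~\ref{Asp2.1}. The only place where one writes an actual argument is the symmetry of $A_{11}$ and $A_{j11}$ as principal blocks of symmetric matrices, plus, if desired, the interlacing remark that makes the eigenvalue counts mutually consistent. I would present it in two or three lines and move on to Propositions~\ref{prop3.2} and~\ref{prop3.3}, which carry the substantive analysis.
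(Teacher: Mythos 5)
Your proposal is correct and matches the paper, which in fact gives no separate proof of Proposition~\ref{prop3.1} but merely states it ``referring to Section~\ref{section2}'': the symmetry of the principal blocks $A_{11}$, $A_{j11}$ gives symmetric (hence symmetrizable) hyperbolicity, while the eigenvalue counts $n_1^+$, $n_1^o$ are exactly the quantities defined in Assumption~\ref{Asp2.1}. Your optional interlacing remarks ($n_1^+\le n^+$, $n_1^o\le r$) are valid but not needed here; the second is established in the paper later, in Proposition~\ref{prop3.2}, via the congruence transformation rather than Cauchy interlacing.
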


Next we turn to the equations \eqref{tem3.14} for the $n_1^o$ unknowns $P_0^*\mu_1$. Referring to the coefficient of $\partial_{zz}(P_0^*\mu_1)$, we set
\begin{equation*}
	K = A_{12}^*P_0
\end{equation*}
and the coefficient matrix can be written as $K^*S^{-1}K$. 
% Then the coefficient matrix in \eqref{tem3.14} can be simply rewritten as $K^*S^{-1}K$.
For these matrices, we have

\begin{prop}\label{prop3.2}
The $r\times n_1^o$-matrix $K$ is of full-column-rank and the relation $r\geq n_1^o$ holds.
Moreover, the coefficient matrix $K^*S^{-1}K$ in \eqref{tem3.14} is a symmetric negative definite matrix. 
\end{prop}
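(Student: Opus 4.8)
The plan is to establish the two claims of Proposition \ref{prop3.2} in the order they are stated: first the full-column-rank of $K=A_{12}^*P_0$ (which immediately gives $r\geq n_1^o$), and then the symmetric negative definiteness of $K^*S^{-1}K$. The second claim follows almost for free once the first is in hand: $S$ is symmetric negative definite by the structural stability condition, hence so is $S^{-1}$, and therefore $x^*K^*S^{-1}Kx = (Kx)^*S^{-1}(Kx) \leq 0$ for all $x\in\mathbb{R}^{n_1^o}$, with equality only if $Kx=0$, i.e. only if $x=0$ when $K$ has full column rank. Symmetry of $K^*S^{-1}K$ is obvious. So the entire content of the proposition reduces to the rank statement.

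For the rank of $K$, the key idea is to exploit the invertibility of the full matrix $A_1$ together with the definition of $P_0$ as (an orthonormal basis of) the kernel of $A_{11}$. Suppose $Kx = A_{12}^*P_0 x = 0$ for some $x\in\mathbb{R}^{n_1^o}$. Set $w = P_0 x \in \mathbb{R}^{n-r}$, so that $A_{11}w = 0$ (since $P_0$ spans $\ker A_{11}$ by \eqref{tem3.11}) and $A_{12}^* w = 0$ (this is exactly $Kx=0$). Now consider the vector $\binom{w}{0}\in\mathbb{R}^n$ and apply $A_1$:
\begin{equation*}
A_1\left({\begin{array}{*{20}c} w \\ 0 \end{array}}\right) = \left({\begin{array}{*{20}c} A_{11} & A_{12} \\ A_{12}^* & A_{22} \end{array}}\right)\left({\begin{array}{*{20}c} w \\ 0 \end{array}}\right) = \left({\begin{array}{*{20}c} A_{11}w \\ A_{12}^*w \end{array}}\right) = \left({\begin{array}{*{20}c} 0 \\ 0 \end{array}}\right).
\end{equation*}
Since $A_1$ is invertible, $\binom{w}{0}=0$, hence $w=0$, hence $P_0 x = 0$; because $P_0$ has orthonormal columns this forces $x=0$. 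Therefore $K$ has trivial kernel, i.e. full column rank $n_1^o$, and since $K$ is $r\times n_1^o$ this requires $r\geq n_1^o$.

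I do not anticipate a genuine obstacle here; the proposition is a short linear-algebra fact. The only point requiring a little care is making explicit that $P_0$ spans precisely the zero-eigenspace of $A_{11}$ (not merely a subspace of it) — this is guaranteed by the orthonormality and block structure in \eqref{tem3.11}, where the lower-right block is the full $n_1^o\times n_1^o$ zero block — so that $A_{11}w=0$ can indeed be extracted from $w\in\operatorname{range}(P_0)$. After that, the symmetric problem essentially writes itself, and it is worth remarking (as a byproduct, consistent with Corollary (3) mentioned in the Introduction) that $r\geq n_1^o$ is the statement that the multiplicity of the zero eigenvalue of the equilibrium coefficient matrix $A_{11}$ does not exceed the number of non-equilibrium variables.
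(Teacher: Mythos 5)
Your proof is correct, and it takes a more direct route than the paper's. The paper proves the rank statement by first performing the congruent transformation $T^*A_1T$ with the matrix $T$ of \eqref{tem3.16}, obtaining the block form \eqref{tem3.17} with the $2\times 2$ block $\left({\begin{array}{*{20}c} 0 & K^* \\ K & A \end{array}}\right)$, $A=A_{22}-A_{12}^*P_1\Lambda_1^{-1}P_1^*A_{12}$; invertibility of $A_1$ then forces this block to be invertible, which gives full column rank of $K$ and $r\geq n_1^o$, and the negative definiteness of $K^*S^{-1}K$ is concluded exactly as you do. You bypass the transformation entirely: from $Kx=0$ you build the kernel vector $\left({\begin{array}{*{20}c} P_0x \\ 0 \end{array}}\right)$ of $A_1$ directly, using $A_{11}P_0=0$ and $A_{12}^*P_0x=0$, and conclude from invertibility of $A_1$ and orthonormality of $P_0$. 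The underlying mechanism (invertibility of $A_1$ kills a would-be kernel vector of $K$) is the same, so your argument is an honest simplification for this proposition; what the paper's longer route buys is that the matrices $T$ and $A$ and the block form \eqref{tem3.17} are needed again in Proposition \ref{prop3.3} and Lemma \ref{lemma4.1}, so the congruence computation is not wasted there. One tiny remark: for your step $A_{11}w=0$ with $w=P_0x$ you only need $\operatorname{range}(P_0)\subseteq\ker A_{11}$, i.e. $A_{11}P_0=0$, which follows from \eqref{tem3.11} and orthonormality; that $P_0$ spans the whole zero-eigenspace is not actually required, so your cautionary comment is harmless but unnecessary.
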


\begin{proof}
Recall the orthonormal $(n-r)\times(n-r)$-matrix $(P_1,P_0)$ defined in \eqref{tem3.11}. We introduce
\begin{align}
T:=&\left({\begin{array}{*{20}c}
  \vspace{1.5mm}P_1 & P_0 & 0 \\
  0 & 0 & I_r
  \end{array}}\right)\left({\begin{array}{*{20}c}
  \vspace{1.5mm}I_{n-r-n_1^o} & 0 & -\Lambda_{1}^{-1}P_1^*A_{12} \\
  \vspace{1.5mm}0 & I_{n_1^o} & 0\\
  0 & 0 & I_r 
  \end{array}}\right) \nonumber\\[2mm]
   =& \left({\begin{array}{*{20}c}
  \vspace{1.5mm}P_1 & P_0 & -P_1\Lambda_{1}^{-1}P_1^*A_{12} \\
  0 & 0 & I_r
  \end{array}}\right) \label{tem3.16}
\end{align}
and consider the congruent transformation of the coefficient matrix 
$A_1=\left({\begin{array}{*{20}c}
  \vspace{1.5mm}A_{11} & A_{12} \\
  A_{12}^* & A_{22}
  \end{array}}\right)$:

\begin{align}
&T^*\left({\begin{array}{*{20}c}
  \vspace{1.5mm}A_{11} & A_{12} \\
  A_{12}^* & A_{22}
  \end{array}}\right)T \nonumber\\[2mm]
  =&\left({\begin{array}{*{20}c}
  \vspace{1.5mm}I & 0 & 0 \\
  \vspace{1.5mm}0 & I & 0\\
  -A_{12}^*P_1\Lambda_{1}^{-1} & 0 & I
  \end{array}}\right)\left({\begin{array}{*{20}c}
  \vspace{1.5mm}\Lambda_{1} & 0 & P_1^*A_{12} \\
  \vspace{1.5mm}0 & 0 & P_0^*A_{12}\\
  A_{12}^*P_1 & A_{12}^*P_0 & A_{22}
  \end{array}}\right)\left({\begin{array}{*{20}c}
  \vspace{1.5mm}I & 0 & -\Lambda_{1}^{-1}P_1^*A_{12} \\
  \vspace{1.5mm}0 & I & 0\\
  0 & 0 & I
  \end{array}}\right) \nonumber\\[2mm]
  =&\left({\begin{array}{*{20}c}
  \vspace{1.5mm}\Lambda_{1} & 0 & 0 \\
  \vspace{1.5mm}0 & 0 & K^*\\
  0 & K & A
  \end{array}}\right)\quad  \text{with} ~~A=A_{22}-A_{12}^*P_1\Lambda_1^{-1}P_1^*A_{12}.\label{tem3.17}
\end{align}
Thus it follows from the invertibility of $A_1$ that the matrix 
$\left({\begin{array}{*{20}c}
  \vspace{1.5mm}0 & K^*\\
   K & A
  \end{array}}\right)$ is invertible. This particularly implies that the $r\times n_1^o$-matrix $K$ must be of full-column-rank 
and $r\geq n_1^o$. Moreover, $K^*S^{-1}K$ is a symmetric negative definite matrix since so is $S$. This completes the proof.
\end{proof}

\begin{remark}
The inequality $r\geq n_1^o$ can also be deduced from the invertibility of $A_1$ and the well-known sub-characteristic condition for hyperbolic relaxation systems \cite{CLL,Y7}. We leave the proof to the interested reader. 
\end{remark}
% With $K$ introduced above, the coefficient matrix of $\partial_{zz}(P_0^*\mu_1)$ in \eqref{tem3.14}
% is $K^*S^{-1}K$. 
% Since $S$ is a symmetric negative definite matrix, the above lemma immediately leads to 
% \begin{prop}\label{prop3.2}
% The coefficient matrix $K^*S^{-1}K$ in \eqref{tem3.14} is a symmetric negative definite matrix. 
% %and thereby $n_1^o$ boundary conditions should be given for \eqref{tem3.14}.
% \end{prop}

At last we discuss global solutions to the ordinary differential equations \eqref{tem3.8} defined for $y\in [0,\infty)$ and have the following facts. 

% In the non-characteristic case (see \cite{Y2}), the author also derived this equation and solved it easily: since $A_{11}$ and $A_1$ are both invertible, it follows that $\mu_0=-A_{11}^{-1}A_{12}\nu_0$ and $\partial_y\nu_0=(A_{22}-A_{12}^*A_{11}^{-1}A_{12})^{-1}S\nu_0$. However, when $A_{11}$ is not invertible, the above equations are no longer valid. To remedy this, we give

\begin{prop}\label{prop3.3}
Under Assumption \ref{Asp2.1}, there exist an $(n-r)\times (r-n_1^o)$-matrix $N$ (defined in \eqref{tem3.23} below) and an $r \times (r-n_1^o)$-matrix $\tilde{K}$ (defined in \eqref{tem3.20}) such that the solution $(\mu_0,\nu_0)$ of \eqref{tem3.8} can be expressed by
$$
\mu_0=Nw,\qquad \quad \nu_0=\tilde{K}w,
$$
and $w$ satisfies the ordinary differential equations
\begin{equation}\label{tem3.18}
	\quad\partial_yw=(\tilde{K}^*A\tilde{K})^{-1}(\tilde{K}^*S\tilde{K})w.
\end{equation}
Moreover, the matrix $(\tilde{K}^*A\tilde{K})^{-1}(\tilde{K}^*S\tilde{K})$ is invertible with $(n^+-n_1^o-n_1^+)$ negative eigenvalues.
\end{prop}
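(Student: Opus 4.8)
\noindent The plan is to reduce the ordinary differential system \eqref{tem3.8} by the congruence $T$ of \eqref{tem3.16}, which displays at once the algebraic constraint and the genuine evolution hidden in \eqref{tem3.8}. Write $\bigl(\begin{smallmatrix}\mu_0\\ \nu_0\end{smallmatrix}\bigr)=T\bigl(\begin{smallmatrix}a\\ b\\ c\end{smallmatrix}\bigr)$ with $a\in\mathbb{R}^{n-r-n_1^o}$, $b\in\mathbb{R}^{n_1^o}$, $c\in\mathbb{R}^{r}$, and multiply \eqref{tem3.8} by $T^*$ on the left. Using \eqref{tem3.17} together with the elementary fact $T^*\mathrm{diag}(0,S)T=\mathrm{diag}(0,0,S)$ (the last block-row of $T$ is $(0,0,I_r)$), the system \eqref{tem3.8} becomes
\begin{equation}\label{tem3.19}
\Lambda_1\partial_y a=0,\qquad K^*\partial_y c=0,\qquad K\partial_y b+A\partial_y c=Sc.
\end{equation}
Since $(a,b,c)(t,\infty,\hat{x})=0$ by \eqref{tem3.4}, the first relation gives $a\equiv0$ ($\Lambda_1$ being invertible) and the second gives $\partial_y(K^*c)=0$, hence $K^*c\equiv0$, i.e. $c(t,y,\hat{x})\in\ker K^*$ for all $y$. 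By Proposition \ref{prop3.2}, $K$ is of full column rank, so $\ker K^*$ has dimension $r-n_1^o$; fixing
\begin{equation}\label{tem3.20}
\tilde{K}\in\mathbb{R}^{r\times(r-n_1^o)}\ \text{of full column rank with}\ K^*\tilde{K}=0,
\end{equation}
we may write $c=\tilde{K}w$ for some $w=w(t,y,\hat{x})\in\mathbb{R}^{r-n_1^o}$, that is, $\nu_0=\tilde{K}w$.

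Next I would extract the evolution of $w$ and the coefficient $b$. Multiplying the third relation in \eqref{tem3.19} by $\tilde{K}^*$ and using $\tilde{K}^*K=(K^*\tilde{K})^*=0$ annihilates the $b$-term and yields
\begin{equation}\label{tem3.21}
\tilde{K}^*A\tilde{K}\,\partial_y w=\tilde{K}^*S\tilde{K}\,w,
\end{equation}
which is exactly \eqref{tem3.18} once $\tilde{K}^*A\tilde{K}$ is known to be invertible; denote by $M_w:=(\tilde{K}^*A\tilde{K})^{-1}(\tilde{K}^*S\tilde{K})$ the coefficient matrix there. The third relation in \eqref{tem3.19} then reads $K\partial_y b=(S\tilde{K}-A\tilde{K}M_w)w$, and its right-hand side is killed by $\tilde{K}^*$ (because of \eqref{tem3.21}), hence lies in $(\ker K^*)^\perp=\mathrm{range}\,K$; thus $\partial_y b=K^\dagger(S\tilde{K}-A\tilde{K}M_w)w$ with $K^\dagger:=(K^*K)^{-1}K^*$. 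Since this identity and \eqref{tem3.21} give $\partial_y\bigl(b-K^\dagger(S\tilde{K}M_w^{-1}-A\tilde{K})w\bigr)=0$ while all quantities vanish as $y\to\infty$, we obtain
\begin{equation}\label{tem3.22}
b=K^\dagger\bigl(S\tilde{K}M_w^{-1}-A\tilde{K}\bigr)w.
\end{equation}
Feeding $a\equiv0$, \eqref{tem3.22} and $c=\tilde{K}w$ back through $T$ gives $\mu_0=Nw$, $\nu_0=\tilde{K}w$ with
\begin{equation}\label{tem3.23}
N:=P_0K^\dagger\bigl(S\tilde{K}M_w^{-1}-A\tilde{K}\bigr)-P_1\Lambda_1^{-1}P_1^*A_{12}\tilde{K}\ \in\ \mathbb{R}^{(n-r)\times(r-n_1^o)}.
\end{equation}

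It remains to prove the spectral statements. The matrix $\tilde{K}^*S\tilde{K}$ is symmetric negative definite, since $S$ is and $\tilde{K}$ has full column rank; hence $M_w$ is invertible as soon as $\tilde{K}^*A\tilde{K}$ is, and both facts follow from an inertia count. Because $T$ is invertible, \eqref{tem3.17} shows that $\Lambda_1\oplus\bigl(\begin{smallmatrix}0&K^*\\ K&A\end{smallmatrix}\bigr)$ has the inertia of $A_1$, namely $n^+$ positive and $n-n^+$ negative eigenvalues; as $\Lambda_1$ carries the nonzero eigenvalues of $A_{11}$, it contributes $n_1^+$ positive and $n-r-n_1^+-n_1^o$ negative eigenvalues, so $\bigl(\begin{smallmatrix}0&K^*\\ K&A\end{smallmatrix}\bigr)$ has $n^+-n_1^+$ positive and $r+n_1^++n_1^o-n^+$ negative eigenvalues. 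Conjugating this matrix by $\mathrm{diag}\bigl(I_{n_1^o},(K,\tilde{K})\bigr)$ — which is invertible because $\mathrm{range}\,K$ and $\ker K^*$ are complementary in $\mathbb{R}^r$ — and using $K^*\tilde{K}=0$, a Schur-complement reduction splits it, up to congruence, as $\bigl(\begin{smallmatrix}0&K^*K\\ K^*K&K^*AK\end{smallmatrix}\bigr)\oplus\tilde{K}^*A\tilde{K}$. The first summand is nondegenerate (as $K^*K>0$) and its quadratic form vanishes on an $n_1^o$-dimensional subspace, so it has inertia $(n_1^o,0,n_1^o)$; matching inertias then shows that $\tilde{K}^*A\tilde{K}$ is invertible with exactly $n^+-n_1^+-n_1^o$ positive eigenvalues. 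Finally, writing $\tilde{K}^*S\tilde{K}=-C^*C$ with $C$ invertible, $M_w=-(\tilde{K}^*A\tilde{K})^{-1}C^*C$ has the same eigenvalues as $-C(\tilde{K}^*A\tilde{K})^{-1}C^*$, which is congruent to $-(\tilde{K}^*A\tilde{K})^{-1}$ and hence has the same inertia as $-\tilde{K}^*A\tilde{K}$; therefore $M_w$ has exactly as many negative eigenvalues as $\tilde{K}^*A\tilde{K}$ has positive ones, namely $n^+-n_1^+-n_1^o=n^+-n_1^o-n_1^+$. The main obstacle is this last piece of inertia bookkeeping — peeling the hyperbolic $(n_1^o,n_1^o)$-block off $\bigl(\begin{smallmatrix}0&K^*\\ K&A\end{smallmatrix}\bigr)$ in order to isolate the inertia of $\tilde{K}^*A\tilde{K}$ — rather than the largely routine reduction of \eqref{tem3.8} performed first.
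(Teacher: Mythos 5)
Your argument is correct and follows essentially the same route as the paper: you use the congruence \eqref{tem3.16}--\eqref{tem3.17} to reduce \eqref{tem3.8} to the constraints $a\equiv 0$, $K^*\nu_0=0$ plus the third block equation, introduce $\tilde{K}$, project with $\tilde{K}^*$ and $K^*$ to get \eqref{tem3.18} and the formula for $P_0^*\mu_1$-analogue $b=P_0^*\mu_0$ (your $N$ coincides with \eqref{tem3.23} since $M_w^{-1}=(\tilde{K}^*S\tilde{K})^{-1}(\tilde{K}^*A\tilde{K})$), and then count inertia through the congruence to $\Lambda_1\oplus\bigl(\begin{smallmatrix}0&K^*K\\ K^*K&K^*AK\end{smallmatrix}\bigr)\oplus\tilde{K}^*A\tilde{K}$, exactly as in \eqref{tem3.25}. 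The only deviation is your treatment of the block $\bigl(\begin{smallmatrix}0&K^*K\\ K^*K&K^*AK\end{smallmatrix}\bigr)$, whose inertia $(n_1^o,0,n_1^o)$ you obtain from the maximal-isotropic-subspace argument instead of the paper's explicit eigenvalue computation in Appendix \ref{AppendA1}; both are valid, and your version of this step (as well as the spelled-out similarity/congruence transfer of inertia from $\tilde{K}^*A\tilde{K}$ to $(\tilde{K}^*A\tilde{K})^{-1}(\tilde{K}^*S\tilde{K})$) is a slightly cleaner justification of facts the paper computes or asserts directly.
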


\begin{remark}
From this proposition, we see that the bounded solution $w=w(t,y,\hat{x})\equiv 0$ if $n^+=n_1^o+n_1^+$. In this situation, we have $(\mu_0,\nu_0)=(\mu_0,\nu_0)(t,y,\hat{x})\equiv 0$. Namely, the three scales in the expansion \eqref{3.1} degenerate to two scales.
% Notice that the result of Proposition \ref{prop3.3} implies two facts: (i) the $(r-n_1^o)\times (r-n_1^o)$ matrix $\tilde{K}^*A\tilde{K}$ in \eqref{tem3.18} should be invertible; (ii) the number of initial conditions for \eqref{tem3.18} should be non-negative which means $n_1^++r\geq n^+$. In fact, these two facts are verified in the proof of Proposition \ref{prop3.3}.
\end{remark}

\begin{proof}
% Our goal is to decompose \eqref{tem3.8} into some algebraic equations and an ordinary differential equation which has the form $\partial_y\zeta=\Phi\zeta$ with $\Phi$ an invertible matrix. 
We divide the proof into three steps. 

\textbf{Step 1:} With the congruent transformation \eqref{tem3.17}, we rewrite \eqref{tem3.8} as
\begin{align*} 
T^*A_1TT^{-1}\left({\begin{array}{*{20}c}
  \vspace{1.5mm}\mu_0 \\
  \nu_0
  \end{array}}\right)_y=\left({\begin{array}{*{20}c}
  \vspace{1.5mm}\Lambda_{1} & 0 & 0 \\
  \vspace{1.5mm}0 & 0 & K^*\\
  0 & K & A
  \end{array}}\right)T^{-1}\left({\begin{array}{*{20}c}
  \vspace{1.5mm}\mu_0 \\
  \nu_0
  \end{array}}\right)_y=T^*\left({\begin{array}{*{20}c}
  \vspace{1.5mm}0_{n-r} & 0 \\
  0 & S
  \end{array}}\right)TT^{-1}\left({\begin{array}{*{20}c}
  \vspace{1.5mm}\mu_0 \\
  \nu_0
  \end{array}}\right).
\end{align*}
% \begin{align}
% \left({\begin{array}{*{20}c}
%   \vspace{1.5mm}\Lambda_{1} & 0 & 0 \\
%   \vspace{1.5mm}0 & 0 & K^*\\
%   0 & K & A
%   \end{array}}\right)T^{-1}\left({\begin{array}{*{20}c}
%   \vspace{1.5mm}\mu_0 \\
%   \nu_0
%   \end{array}}\right)_y=&\left({\begin{array}{*{20}c}
%   \vspace{1.5mm}0_{n-r-n_1^o} & 0 & 0 \\
%   \vspace{1.5mm}0 & 0_{n_1^o} & 0\\
%   0 & 0 & S
%   \end{array}}\right)T^{-1}\left({\begin{array}{*{20}c}
%   \vspace{1.5mm}\mu_0 \\
%   \nu_0
%   \end{array}}\right).
% \end{align}
By using the expression of $T$ in \eqref{tem3.16}, it is easy to compute
$$
T^*\left({\begin{array}{*{20}c}
  \vspace{1.5mm}0_{n-r} & 0 \\
  0 & S
  \end{array}}\right)T = \left({\begin{array}{*{20}c}
  \vspace{1.5mm}0_{n-r-n_1^o} & 0 & 0 \\
  \vspace{1.5mm}0 & 0_{n_1^o} & 0\\
  0 & 0 & S
  \end{array}}\right),\qquad T^{-1}=\left({\begin{array}{*{20}c}
  \vspace{1.5mm}P_1^* & \Lambda_{1}^{-1}P_1^*A_{12} \\
  \vspace{1.5mm}P_0^* & 0 \\
                  0 & I_r 
  \end{array}}\right). 
$$
Thus the original differential equations become 
\begin{align*}
\left({\begin{array}{*{20}c}
  \vspace{1.5mm}\Lambda_{1} & 0 & 0 \\
  \vspace{1.5mm}0 & 0 & K^*\\
  0 & K & A
  \end{array}}\right)\left({\begin{array}{*{20}c}
  \vspace{1.5mm}P_1^*\mu_0+\Lambda_1^{-1}P_1^*A_{12}\nu_0 \\
  \vspace{1.5mm} P_0^*\mu_0\\
                 \nu_0
  \end{array}}\right)_y=&\left({\begin{array}{*{20}c}
  \vspace{1.5mm}0 & 0 & 0 \\
  \vspace{1.5mm}0 & 0 & 0\\
  0 & 0 & S
  \end{array}}\right)\left({\begin{array}{*{20}c}
  \vspace{1.5mm} P_1^*\mu_0+\Lambda_1^{-1}P_1^*A_{12}\nu_0 \\
  \vspace{1.5mm} P_0^*\mu_0\\
                 \nu_0
  \end{array}}\right).
\end{align*}
Since $\mu_0(t,\infty,\hat{x})=0$ and $\nu_0(t,\infty,\hat{x})=0$, it follows immediately from the last equation that
% \begin{eqnarray}\label{Step1}
% \left\{{\begin{array}{*{20}l}
%   ~P_1^*\mu_0=-\Lambda_1^{-1}P_1^*A_{12}\nu_0,\\[4mm]
%   \left({\begin{array}{*{20}c}
%   \vspace{1.5mm}0 & K^* \\
%   K & A
%   \end{array}}\right)\left({\begin{array}{*{20}c}
%   \vspace{1.5mm}P_0^*\mu_0 \\
%   \nu_0
%   \end{array}}\right)_y=
%   \left({\begin{array}{*{20}c}
%   \vspace{1.5mm}0  &  \\
%    & S
%   \end{array}}\right)\left({\begin{array}{*{20}c}
%   \vspace{1.5mm}P_0^*\mu_0 \\
%   \nu_0
%   \end{array}}\right).
% \end{array}}\right.
% \end{eqnarray}
\begin{eqnarray}\label{tem3.19}
\left\{{\begin{array}{*{20}l}
  ~P_1^*\mu_0 =-\Lambda_1^{-1}P_1^*A_{12}\nu_0,\\[3mm]
  ~K^*\nu_0 = 0,\\[3mm]
  ~KP_0^*\mu_{0y}+A\nu_{0y}=S\nu_0. 
\end{array}}\right.
\end{eqnarray}

\textbf{Step 2:}
%Now we proceed to decompose the differential equation in \eqref{tem3.19}. 
% Lemma \ref{lemma21} implies that the coefficient matrix $\left({\begin{array}{*{20}c}
%   \vspace{1.5mm}0 & K^* \\
%   K & A
%   \end{array}}\right)$ is invertible. 
% Now we compute the inverse matrix explicitly. 
Recall from Proposition \ref{prop3.2} that $K$ is an $r\times n_1^o$ full-column-rank matrix. Then there exists an $r\times (r-n_1^o)$ full-rank matrix $\tilde{K}$ such that 
\begin{equation}\label{tem3.20}
	\big(K,~\tilde{K}\big) \quad \text{is invertible} \quad \text{and} \quad \tilde{K}^*K=0.
\end{equation}
Thus we deduce from $K^*\nu_0=0$ in \eqref{tem3.19} that 
\begin{align}
	\nu_0=K(K^*K)^{-1}K^*\nu_0+\tilde{K}(\tilde{K}^*\tilde{K})^{-1}\tilde{K}^*\nu_0 
		 =\tilde{K}(\tilde{K}^*\tilde{K})^{-1}\tilde{K}^*\nu_0 
		 \equiv\tilde{K}w\label{tem3.21}
\end{align}
with $w=(\tilde{K}^*\tilde{K})^{-1}\tilde{K}^*\nu_0$.
By \eqref{tem3.20} and \eqref{tem3.21}, we multiply the third equation in \eqref{tem3.19} with $\tilde{K}^*$ from the left to obtain
\begin{align}\label{tem3.22}
\tilde{K}^*A\tilde{K}w_y=\tilde{K}^*S\tilde{K}w.
\end{align}
If the matrix $\tilde{K}^*A\tilde{K}$ is invertible, we can obtain the ordinary differential equations \eqref{tem3.18}. This invertibility will be shown in Step 3.

To express $\mu_0$ in term of $w$, we multiply the third equation in \eqref{tem3.19} with $K^*$ from the left and use \eqref{tem3.21} to obtain
$$
(K^*K)(P_0^*\mu_0)_y+K^*A\tilde{K}w_y=K^*S\tilde{K}w.
$$
Using \eqref{tem3.22} and the invertibility of $\tilde{K}^*S\tilde{K}$, we have
$$
(K^*K)(P_0^*\mu_0)_y+K^*A\tilde{K}w_y=(K^*S\tilde{K})(\tilde{K}^*S\tilde{K})^{-1}(\tilde{K}^*A\tilde{K})w_y.
$$
Then it follows from $\mu_0(t,\infty,\hat{x})=0$ and $w(t,\infty,\hat{x})=(\tilde{K}^*\tilde{K})^{-1}\tilde{K}^*\nu_0(t,\infty,\hat{x})=0$ that 
\begin{equation*}
P_0^*\mu_0=(K^*K)^{-1}\left[(K^*S\tilde{K})(\tilde{K}^*S\tilde{K})^{-1}(\tilde{K}^*A\tilde{K})-(K^*A\tilde{K})\right]w.
\end{equation*}
This, together with the first equation in \eqref{tem3.19} and \eqref{tem3.21}, gives
\begin{equation*}
	\mu_0 = P_1(P_1^*\mu_0) + P_0(P_0^*\mu_0)= Nw,
\end{equation*}
where
\begin{equation}\label{tem3.23}
	N:=-P_1\Lambda_1^{-1}P_1^*A_{12}\tilde{K} + P_0
  (K^*K)^{-1}\left[(K^*S\tilde{K})(\tilde{K}^*S\tilde{K})^{-1}(\tilde{K}^*A\tilde{K})-(K^*A\tilde{K})\right].
\end{equation}

% In summary, the above expression for $\mu_0$ together with \eqref{tem3.21} and \eqref{tem3.22} gives the results in the proposition. Next, it remains to prove that $\tilde{K}^*A\tilde{K}$ is invertible and show that $(\tilde{K}^*A\tilde{K})^{-1}(\tilde{K}^*S\tilde{K})$ has $(n^+-n_1^o-n_1^+)$ negative eigenvalues.

\textbf{Step 3:} 
It remains to show that $\tilde{K}^*A\tilde{K}$ is invertible and $(\tilde{K}^*A\tilde{K})^{-1}(\tilde{K}^*S\tilde{K})$ has $(n^+-n_1^o-n_1^+)$ negative eigenvalues.
To do this, we introduce
\begin{equation}\label{tem3.24}
\bar{L}_1 := \left({\begin{array}{*{20}c}
\vspace{1.5mm}I_{n_1^o} & 0 & 0 \\
0 & K & \tilde{K}
\end{array}}\right),\quad\bar{L}_2:=\left({\begin{array}{*{20}c}
\vspace{2mm}I_{n_1^o} & 0 & -(K^*K)^{-1}(K^*A\tilde{K}) \\
\vspace{2mm}0 & I_{n_1^o} & 0 \\
0 & 0 & I_{r-n_1^o}
\end{array}}\right)
\end{equation}
and consider the following congruent transformation 
\begin{align*}
\bar{L}_2^*\bar{L}_1^*\left({\begin{array}{*{20}c}
\vspace{1.5mm}0 & K^* \\
K & A
\end{array}}\right)\bar{L}_1\bar{L}_2
=&
\bar{L}_2^*
\left({\begin{array}{*{20}c}
\vspace{1.5mm}0 & K^*K & 0 \\
\vspace{1.5mm}K^*K & K^*AK & K^*A\tilde{K} \\
0 & \tilde{K}^*AK & \tilde{K}^*A\tilde{K}
\end{array}}\right)
\bar{L}_2 
= \left({\begin{array}{*{20}c}
\vspace{1.5mm}0 & K^*K & 0 \\
\vspace{1.5mm}K^*K & K^*AK & 0 \\
0 & 0 & \tilde{K}^*A\tilde{K}
\end{array}}\right).
\end{align*}
% \begin{equation}\label{tem3.26}
% \left({\begin{array}{*{20}c}
% \vspace{2mm}I & 0 & 0 \\
% \vspace{2mm}0 & I & 0 \\
% \bar{Y}^* & 0 & I
% \end{array}}\right)\left({\begin{array}{*{20}c}
% \vspace{1.5mm}0 & K^*K & 0 \\
% \vspace{1.5mm}K^*K & K^*AK & K^*A\tilde{K} \\
% 0 & \tilde{K}^*AK & \tilde{K}^*A\tilde{K}
% \end{array}}\right)\left({\begin{array}{*{20}c}
% \vspace{2mm}I & 0 & \bar{Y} \\
% \vspace{2mm}0 & I & 0 \\
% 0 & 0 & I
% \end{array}}\right)= \left({\begin{array}{*{20}c}
% \vspace{2mm}0 & K^*K  & 0 \\
% \vspace{2mm}K^*K  & K^*AK & 0 \\
% 0 & 0 & \tilde{K}^*A\tilde{K}
% \end{array}}\right) 
% \end{equation}
With this and the congruent transformation \eqref{tem3.17}, we find that 
\begin{equation}\label{tem3.25}
L^*A_1L=
\left({\begin{array}{*{20}c}
  \vspace{1.5mm}\Lambda_{1} & 0 & 0 & 0\\
  \vspace{1.5mm}0 & 0  & K^*K & 0 \\
  \vspace{1.5mm}0 & K^*K & K^*AK & 0 \\
0 & 0 & 0 & \tilde{K}^*A\tilde{K}
  \end{array}}\right)\quad \text{with}~~L:=T\left({\begin{array}{*{20}c}
  \vspace{1.5mm}I_{n-r-n_1^o} & 0 \\
                0 & \bar{L}_1\bar{L}_2
  \end{array}}\right).
\end{equation}
Since $A_1$ is invertible, we see from \eqref{tem3.25} that $\tilde{K}^*A\tilde{K}$ is invertible. 

Moreover, we recall the definition of $n^+$ and $n_1^+$ in Section \ref{section2}. Then the number of positive eigenvalues of $\tilde{K}^*A\tilde{K}$ equals to
$$
n^+-n_1^+-\text{the number of positive eigenvalues of } \left({\begin{array}{*{20}c}
\vspace{2mm}0 & K^*K \\
K^*K  & K^*AK 
\end{array}}\right).
$$
In Appendix \ref{AppendA1}, we show that the matrix $\left({\begin{array}{*{20}c}
\vspace{1mm}0 & K^*K \\
K^*K  & K^*AK 
\end{array}}\right)$ has $n_1^o$ positive eigenvalues. Therefore, $\tilde{K}^*A\tilde{K}$ has $(n^+-n_1^+-n_1^o)$ positive eigenvalues. Since $\tilde{K}^*S\tilde{K}$ is negative definite, the matrix $(\tilde{K}^*A\tilde{K})^{-1}(\tilde{K}^*S\tilde{K})$ in \eqref{tem3.18} has $(n^+-n_1^+-n_1^o)$ negative eigenvalues. This completes the proof.
\end{proof}

\subsection{Boundary conditions for the expansion coefficients}\label{subsection3.3}

Here we show how to determine an initial condition for \eqref{tem3.8} and boundary conditions for \eqref{3.2} and \eqref{tem3.14}. 
The approximate solution in \eqref{3.1} is constructed to satisfy 
\begin{eqnarray}\label{tem3.26}
  (B_u,B_v)\left({\begin{array}{*{20}c}
  \vspace{1.5mm}\bar{u} +\mu_0+\mu_1 \\
  \bar{v} +\nu_0+\nu_1
  \end{array}}\right)(t,0,\hat{x})=b(t,\hat{x}).
\end{eqnarray}
Referring to \eqref{3.3},\eqref{tem3.7} and \eqref{tem3.12}, we can write \eqref{tem3.26} as
\begin{eqnarray*} 
  (B_u,B_v)\left({\begin{array}{*{20}c}
  \vspace{1.5mm}\bar{u} +\mu_0+P_0(P_0^*\mu_1) \\
  \nu_0
  \end{array}}\right)(t,0,\hat{x})=b(t,\hat{x}).
\end{eqnarray*}
From Proposition \ref{prop3.3}, we know that $\mu_0=Nw$ and $\nu_0=\tilde{K}w$. Thus the last equation can be rewritten as 
\begin{equation}\label{tem3.27}
	(B_u, ~B_uP_0, ~B_uN+B_v\tilde{K})\left({\begin{array}{*{20}c}
  \vspace{1.5mm}\bar{u}\\
  \vspace{1.5mm}P_0^*\mu_1\\
                w
  \end{array}}\right)(t,0,\hat{x})=b(t,\hat{x}).
\end{equation}

To obtain a boundary condition for the hyperbolic system \eqref{3.2}, we express $\bar{u}=\bar{u}(t,0,\hat{x})$ in terms of its characteristic variables. 
Recall that $P_0$ consists of eigenvectors of $A_{11}$ associated with the zero eigenvalue. Let $P_1^S$ and $P_1^U$ be the respective right-stable and right-unstable matrices of $A_{11}$. Then we have the decomposition
\begin{equation}\label{tem3.28}
	\bar{u}(t,0,\hat{x}) =P_1^S \alpha^S +P_1^U \alpha^U+ P_0 \alpha^0.  
\end{equation}
Here vectors $\alpha^U$ and $\alpha^S$ are the incoming and outgoing 
modes for the equilibrium system, while $\alpha^0$ is the characteristic variable associated with the zero eigenvalue.
By the classical theory of hyperbolic IBVPs \cite{GKO}, the boundary condition for $\bar{u}$ should be an expression of $\alpha^U$ in terms of $\alpha^0$ and $\alpha^S$. Thus we rewrite \eqref{tem3.27} as 
\begin{eqnarray}\label{tem3.29}
 	(B_uP_1^U, ~B_uP_0, ~B_uN+B_v\tilde{K})\left({\begin{array}{*{20}c}
  \vspace{1.5mm}\alpha^U\\
  \vspace{1.5mm}P_0^*\mu_1\\
                w
  \end{array}}\right) =b(t,\hat{x})-B_uP_1^S\alpha^S-B_uP_0  \alpha^0.
\end{eqnarray}

On the other hand, recall Proposition \ref{prop3.3} that $w$ satisfies the ordinary differential equations
$$
\partial_yw=(\tilde{K}^*A\tilde{K})^{-1}(\tilde{K}^*S\tilde{K})w.
$$ 
In order for $w$ to be bounded, its initial value should satisfy 
\begin{equation}\label{tem3.30}
	L_2^Uw(t,0,\hat{x})=0,
\end{equation}
where $L_2^U$ is the left-unstable matrix of $(\tilde{K}^*A\tilde{K})^{-1}(\tilde{K}^*S\tilde{K})$. By Proposition \ref{prop3.3}, the invertible matrix $(\tilde{K}^*A\tilde{K})^{-1}(\tilde{K}^*S\tilde{K})$ has $(r-n_1^o)-(n^+ - n_1^o - n_1^+)= n_1^+ + r- n^+$ positive eigenvalues. Thus $L_2^U$ is an $(n_1^++r-n^+)\times (r-n_1^o)$-matrix. 

Combining the equations \eqref{tem3.29} and \eqref{tem3.30}, we arrive at
\begin{align}\label{tem3.31}
\left({\begin{array}{*{20}c}
  \vspace{1.5mm}B_uP_1^U & B_uP_0 & B_uN+B_v\tilde{K} \\
  0 & 0 & L_2^U
  \end{array}}\right)\left({\begin{array}{*{20}c}
  \vspace{1.5mm}\alpha^U \\
  \vspace{1.5mm}P_0^*\mu_1 \\
  w
  \end{array}}\right) =
  \left({\begin{array}{*{20}c}
  \vspace{1.5mm}b - B_u P_1^S \alpha^S- B_u P_0 \alpha^0 \\
  0
  \end{array}}\right). 
\end{align}
This is a system of $(n_1^++r)$ linear algebraic equations for $(n_1^++r)$ variables 
 $\alpha^U$, $P_0^*\mu_1$ and $w$, which have $n_1^+$, $n_1^o$ and $(r-n_1^o)$ components respectively.
% Since $B=(B_u,B_v)$ is an $n^+\times n$ matrix, $\tilde{K}$ is an $r \times (r-n_1^o)$ matrix and $L_2^U$ is an $(n_1^++r-n^+)\times (r-n_1^o)$ matrix, it is easy to see that 
If the square matrix in \eqref{tem3.31} 
% \begin{equation}\label{3.30} 
%   \Psi:=\left({\begin{array}{*{20}c}
%   \vspace{1.5mm}B_uP_1^U & B_uP_0 & B_uN+B_v\tilde{K}  \\
%   0 & 0 & L_2^U
%   \end{array}}\right)
% \end{equation}
is invertible, we can solve the system of linear algebraic equations to obtain the boundary conditions for \eqref{3.2} and \eqref{tem3.14} as well as $w(t,0,\hat{x})$. 
%Thus the initial condition for \eqref{tem3.8} can be determined with Proposition \ref{prop3.3}.

In the next section, we will show the invertibility of the square matrix in \eqref{tem3.31} under the GKC.

\section{Reduced boundary conditions}\label{section4}
In this section, we show that the square matrix in \eqref{tem3.31} is invertible under the GKC and explicitly write down the boundary condition for the equilibrium system \eqref{3.2}. Moreover, we show that the hyperbolic system \eqref{3.2} together with the derived boundary condition is well-posed in the sense of \cite{MO}.

% Following the idea in \cite{Y2}, we consider the problem	
% \begin{eqnarray}
% \left\{{\begin{array}{*{20}l}
%   \xi\hat{U}+A_1\hat{U}_x=\eta Q\hat{U},\quad x>0,\\[4mm]
%   B\hat{U}=0,\qquad \text{on}~~ x=0.
% \end{array}}\right.
% \end{eqnarray}
% Since the matrix $A_1$ is invertible, we obtain
% \begin{align*}
%   \hat{U}_x=A_1^{-1}(\eta Q-\xi I)\hat{U}\equiv M(\xi,\eta)\hat{U}.
% \end{align*}
% Let $R_M^S(\xi,\eta)$ denotes the right-stable matrix for $M(\xi,\eta)$. 
% Notice that Lemma 2.3 in \cite{Y2} is still valid in this characteristic case which means the number of stable eigenvalues of $M(\xi,\eta)$ equals to $n^+$. Thus $R_M^S(\xi,\eta)$ should be an $n\times n^+$ matrix. Moreover, Lemma 3.1 in \cite{Y2} is also valid and it implies that the square matrix $BR_M^S(\xi,\eta)$ should be invertible. Otherwise, the problem \eqref{2.1}, with a bounded initial value, admits an exponentially increasing solution as $\epsilon$ goes to zero. Motivated by this, the Generalized Kreiss condition was introduced in \cite{Y2} by\\

% \textbf{Generalized Kreiss Condition.} There exists $c_K>0$ such that 
% $$
% |\det\{BR_M^S(\xi,\eta)\}|\geq c_K \sqrt{\det\{R_M^{S*}(\xi,\eta)R_M^S(\xi,\eta)\}}
% $$
% for all $\eta\geq 0$ and $\xi$ with $Re\xi>0$.\\
For this purpose, we follow \cite{Y2} and compute the right-stable matrix $R_M^S(\xi,\omega,\eta)$ for $M(\xi,\omega,\eta)$ with $\eta$ sufficiently large. The invertibility will be seen from the GKC as $\eta\rightarrow \infty$. 
% To do this, we will give a block-matrix form of $R_M^S(\xi,\omega,\eta)$ when $\eta$ is sufficiently large. With this, we can take $\eta\rightarrow \infty$ in the GKC and prove the invertibility of $\Psi$. Recall that $R_M^S(\xi,\omega,\eta)$ is the right-stable matrix for $M(\xi,\omega,\eta)$. 
% To do this, we introduce some similar transformations for $M(\xi,\omega,\eta)$ and the results are given in Lemmas \ref{lemma4.1}-\ref{lemma4.3}. 
The computation consists of a number of similarity transformations for $M(\xi,\omega,\eta)$.
For convenience, we introduce the notation
$$
C(\omega)= i \sum_{j=2}^d\omega_j A_{j11}~~\text{and}~~C_{kl}(\omega) = P_k^*C(\omega)P_l,\quad \text{for}~~k,l=0,1
$$
with $P_0$ and $P_1$ defined in \eqref{tem3.11}. 

The rest of this section consists of three subsections.

\subsection{Preparations}
In this subsection, we present some similarity transformations of the matrix $M=M(\xi,\omega,\eta)$.
\begin{lemma}\label{lemma4.1}
Under Assumption \ref{Asp2.1}, the matrix $M=M(\xi,\omega,\eta)$ is similar to 
\begin{equation}\label{4.1}
 \bar{M}=\left({\begin{array}{*{20}c}
  \vspace{1.5mm} M_{1}(\xi,\omega)& \star & \star & \quad\star\\
  \vspace{1.5mm} \star & \star & \eta M_{3} +  \star & \quad\star\\
  \vspace{1.5mm} 0  &  M_2(\xi,\omega) & \star &\quad \star\\
  \star & \star & \star & \eta M_4 + \star
  \end{array}}\right) 
\end{equation}
with 
\begin{align*}
M_1(\xi,\omega) =&-\Lambda_1^{-1}\bigg(\big[\xi I +C_{11}(\omega)\big]-C_{10}(\omega)\big[\xi I + C_{00}(\omega)\big]^{-1}C_{01}(\omega) \bigg),\\ 
M_2(\xi,\omega) =&-(K^*K)^{-1}\big[\xi I + C_{00}(\omega)\big],\\ 
M_3 =&~(K^*K)^{-1}\left[K^*SK - (K^*S\tilde{K})(\tilde{K}^*S\tilde{K})^{-1}(\tilde{K}^*SK)\right],\\ 
M_4 = &~(\tilde{K}^*A\tilde{K})^{-1}(\tilde{K}^*S\tilde{K}).
\end{align*}
Here the notation $\star$ means a matrix independent of the parameter $\eta$.
\end{lemma}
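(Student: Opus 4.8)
The plan is to realize $\bar M$ as the conjugate of $M=A_1^{-1}(\eta Q-\xi I-iC(\omega)\oplus\cdots)$ by the invertible matrix $L$ already constructed in \eqref{tem3.25}, up to a further block reshuffle. Recall from Proposition~\ref{prop3.3} (equation \eqref{tem3.25}) that $L^*A_1L$ is block diagonal, namely
\[
L^*A_1L=\mathrm{diag}\!\left(\Lambda_1,\ \begin{pmatrix}0&K^*K\\ K^*K&K^*AK\end{pmatrix},\ \tilde K^*A\tilde K\right).
\]
So the first step is to compute $L^{-1}ML=(L^*A_1L)^{-1}\,L^*\!\left(\eta Q-\xi I-i\sum_{j\ge2}\omega_jA_j\right)\!L$, using that $L^*A_1L$ is the above block-diagonal matrix and that conjugation by $L$ is the same as the congruence $L^*(\cdot)L$ followed by left-multiplication by $(L^*A_1L)^{-1}$. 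The right-hand matrix $\eta Q-\xi I-i\sum\omega_jA_j$ has the $(u,v)$ block structure $\mathrm{diag}(0,S)$ for $Q$, $I_n$ for the identity, and the symmetric $A_j$'s; I would push all of these through the explicit columns of $L=T\,\mathrm{diag}(I_{n-r-n_1^o},\bar L_1\bar L_2)$ recorded in \eqref{tem3.16}, \eqref{tem3.17}, \eqref{tem3.24}.

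The key observations that make the four diagonal-ish blocks come out as claimed are: (i) the $\Lambda_1$-block of $L^*A_1L$ is decoupled at leading order, and since $Q$ annihilates the $u$-variables, the $\eta Q$ term contributes nothing there, leaving exactly $-\Lambda_1^{-1}(\xi I+C_{11}-C_{10}(\xi I+C_{00})^{-1}C_{01})=M_1(\xi,\omega)$ after eliminating the $P_0^*\mu$-coupling by a Schur-complement-type similarity (this is the source of the zero below $M_1$ in \eqref{4.1}); (ii) in the middle $2\times2$ super-block governed by $\begin{pmatrix}0&K^*K\\ K^*K&K^*AK\end{pmatrix}$, the term $-\xi I-iC$ restricted to the $P_0^*\mu$-row yields $M_2(\xi,\omega)=-(K^*K)^{-1}(\xi I+C_{00})$, while the $\eta S$-contribution to the off-diagonal $v$-type slot, after inverting that $2\times2$ block, produces precisely $\eta M_3$ with the indicated Schur complement $K^*SK-(K^*S\tilde K)(\tilde K^*S\tilde K)^{-1}(\tilde K^*SK)$; (iii) the last block, carried by $\tilde K^*A\tilde K$, picks up $\eta(\tilde K^*A\tilde K)^{-1}(\tilde K^*S\tilde K)=\eta M_4$ from the $\eta Q$ term and an $\eta$-independent remainder from $-\xi I-iC$. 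Every remaining entry is manifestly $\eta$-independent, so it may be absorbed into the $\star$'s, and a final permutation similarity orders the blocks as in \eqref{4.1}.

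The main obstacle will be bookkeeping: tracking how $-\xi I-i\sum_{j\ge2}\omega_jA_j$ — whose off-diagonal $A_{j12}$ pieces are \emph{not} killed by $L$ — spreads into the $\star$ positions, and verifying that none of these cross terms carry a hidden $\eta$. This is purely the claim that $\eta$ enters only through $Q$, hence only through the $v$-rows, hence only into the two places where the block structure of $(L^*A_1L)^{-1}$ routes it, namely the $M_3$ and $M_4$ slots; I would isolate this as a short lemma ("$L^{-1}(\eta Q)L$ has nonzero entries only in the columns dual to the $v$-variables") and then the Schur-complement identities for $M_1$ and $M_3$ are the only genuine computations. The elimination producing the zero in position $(3,1)$ of \eqref{4.1} is a standard block similarity $I+\star$ and should be stated as such rather than expanded.
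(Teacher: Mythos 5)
Your overall route is the paper's route (conjugate $M$ by the matrix $L$ of \eqref{tem3.25}, then clean up by further $\eta$-independent block similarities), but there is a genuine gap in the middle step, precisely where you try to dispose of it with a "short lemma". After the $L$-conjugation, $(L^*A_1L)^{-1}L^*(\eta Q)L$ does \emph{not} populate only the $M_3$ and $M_4$ slots: since $L^*QL$ has the full block
$\left({\begin{smallmatrix} K^*SK & K^*S\tilde K\\ \tilde K^*SK & \tilde K^*S\tilde K\end{smallmatrix}}\right)$
in its lower-right corner and $S$ couples $K$ with $\tilde K$ (in general $K^*S\tilde K\neq 0$), left multiplication by $(L^*A_1L)^{-1}$ routes $\eta$ into \emph{four} block positions: one gets $\eta (K^*K)^{-1}K^*SK+\star$ at $(2,3)$, $\eta (K^*K)^{-1}K^*S\tilde K+\star$ at $(2,4)$, $\eta(\tilde K^*A\tilde K)^{-1}\tilde K^*SK+\star$ at $(4,3)$ and $\eta(\tilde K^*A\tilde K)^{-1}\tilde K^*S\tilde K+\star$ at $(4,4)$. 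The entries at $(2,4)$ and $(4,3)$ sit in positions that must be $\eta$-independent $\star$'s in \eqref{4.1}, so your assertion "every remaining entry is manifestly $\eta$-independent" fails at this stage. Relatedly, your explanation of $M_3$ is not correct: inverting the middle block $\left({\begin{smallmatrix}0 & K^*K\\ K^*K & K^*AK\end{smallmatrix}}\right)$ produces only $\eta(K^*K)^{-1}K^*SK$, not the Schur complement $K^*SK-(K^*S\tilde K)(\tilde K^*S\tilde K)^{-1}(\tilde K^*SK)$.

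What is missing is the second, explicitly constructed similarity (the paper's $L_0$ in \eqref{4.5}, an $\eta$-independent matrix mixing the third and fourth block columns via $-(\tilde K^*S\tilde K)^{-1}(\tilde K^*SK)$ and the second and fourth via $(K^*K)^{-1}(K^*S\tilde K)(\tilde K^*S\tilde K)^{-1}(\tilde K^*A\tilde K)$). This conjugation simultaneously annihilates the $\eta$-terms at $(2,4)$ and $(4,3)$ and, as a by-product, replaces $K^*SK$ at $(2,3)$ by its Schur complement, yielding $\eta M_3+\star$ there while leaving $\eta M_4+\star$ at $(4,4)$; one must also note that $L_0$ being constant guarantees no new $\eta$-dependence is created. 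Your final step---the Schur-complement-type similarity in the first two block columns producing $M_1$ and the zero at $(3,1)$---matches the paper's $L_1$ of \eqref{4.6} and is fine, but without the $L_0$-elimination the claimed form \eqref{4.1}, with all $\star$'s independent of $\eta$, is not reached.
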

\begin{proof}
With the matrix $L$ defined in \eqref{tem3.25}, we consider the similarity transformation
\begin{align}
	L^{-1}ML=& (L^*A_1L)^{-1}L^*\left(\eta Q -\xi I- i \sum_{j=2}^d\omega_jA_j\right)L.\label{4.2}
\end{align}
%The computation of $(L^*A_1L)^{-1}$ is quite simple since \eqref{tem3.25} gives a block-matrix form. Due to
Because  
$$
\left({\begin{array}{*{20}c}
\vspace{2mm}0 & K^*K \\
K^*K  & K^*AK 
\end{array}}\right)^{-1}=\left({\begin{array}{*{20}c}
\vspace{2mm}-(K^*K)^{-1}(K^*AK)(K^*K)^{-1} & (K^*K)^{-1} \\
             (K^*K)^{-1} & 0 
\end{array}}\right),
$$
it follows from \eqref{tem3.25} that 
\begin{equation}\label{4.3}
	(L^*A_1L)^{-1}=\left({\begin{array}{*{20}c}
  \vspace{2mm}\Lambda_{1}^{-1} & 0 & 0 & 0\\
  \vspace{2mm}0 & -(K^*K)^{-1}(K^*AK)(K^*K)^{-1} & (K^*K)^{-1} & 0 \\
  \vspace{2mm}0 & (K^*K)^{-1} & 0 & 0 \\
0 & 0 & 0 & (\tilde{K}^*A\tilde{K})^{-1}
  \end{array}}\right).
\end{equation}

On the other hand, from \eqref{tem3.16}, \eqref{tem3.24} and \eqref{tem3.25} we have
\begin{align}
L \equiv & T \left({\begin{array}{*{20}c}
  \vspace{1.5mm}I & 0 \\
                0 & \bar{L}_1\bar{L}_2
  \end{array}}\right) \nonumber\\[2mm]
= & \left({\begin{array}{*{20}c}
  \vspace{1.5mm}P_1 & P_0 & -P_1\Lambda_{1}^{-1}P_1^*A_{12} \\
  0 & 0 & I_r
  \end{array}}\right)\left({\begin{array}{*{20}c}
\vspace{1.5mm}I & 0 & 0 & 0 \\
\vspace{1.5mm}0 & I & 0 & -(K^*K)^{-1}(K^*A\tilde{K}) \\
0 & 0 & K & \tilde{K}
\end{array}}\right) \nonumber \\[2mm]
=&\left({\begin{array}{*{20}c}
  \vspace{1.5mm}P_1 & P_0 & -P_1\Lambda_{1}^{-1}P_1^*A_{12}K & -P_1\Lambda_{1}^{-1}P_1^*A_{12}\tilde{K}-P_0(K^*K)^{-1}(K^*A\tilde{K})\\
  0 & 0 & K & \tilde{K}
  \end{array}}\right).\label{4.4}
\end{align}
Then we can compute (omit some details by $\star$)
\begin{align*}
L^*\left(\eta Q -\xi I-i \sum_{j=2}^d \omega_jA_j\right)L = &\left({\begin{array}{*{20}c}
  \vspace{1.5mm}P_1^* & 0\\
  \vspace{1.5mm}P_0^* & 0\\
  \vspace{1.5mm}\star & K^*\\
  \star & \tilde{K}^* 
  \end{array}}\right)\left({\begin{array}{*{20}c}
  \vspace{1.5mm}-\xi I_{n-r}-C(\omega) & \star\\
  \star & \eta S + \star
  \end{array}}\right)
  \left({\begin{array}{*{20}c}
  \vspace{1.5mm}P_1 & P_0 & \star & \star\\
  0 & 0 & K & \tilde{K}
  \end{array}}\right)\\[2mm]
  = &\left({\begin{array}{*{20}c}
  \vspace{1.5mm}-\xi P_1^*-P_1^*C(\omega) & \star \\
  \vspace{1.5mm}-\xi P_0^*-P_0^*C(\omega) & \star \\
  \vspace{1.5mm}\star & \eta K^*S + \star  \\
  \star & \eta \tilde{K}^*S + \star 
  \end{array}}\right)\left({\begin{array}{*{20}c}
  \vspace{1.5mm}P_1 & P_0 & \star & \star\\
  0 & 0 & K & \tilde{K}
  \end{array}}\right)\\[2mm]
  = &\left({\begin{array}{*{20}c}
  \vspace{1.5mm}-\xi I -C_{11}(\omega) & -C_{10}(\omega) & \star & \star \\
  \vspace{1.5mm}-C_{01}(\omega) & -\xi I -C_{00}(\omega) & \star & \star\\
  \vspace{1.5mm}\star & \star & \eta K^*SK + \star & \eta K^*S\tilde{K} + \star\\
  \star &  \star & \eta \tilde{K}^*SK + \star & \eta \tilde{K}^*S\tilde{K} + \star 
  \end{array}}\right).
\end{align*}
With this and \eqref{4.3}, the matrix $L^{-1}ML$ in \eqref{4.2} can be written as
\begin{align*}
\left({\begin{array}{*{20}c}
  \vspace{1.5mm}- \Lambda_1^{-1}[\xi I + C_{11}(\omega)] & - \Lambda_1^{-1} C_{10}(\omega) & \star & \star \\
  \vspace{1.5mm}\star & \star &  \eta (K^*K)^{-1}K^*SK + \star & \eta (K^*K)^{-1}K^*S\tilde{K} + \star\\
  \vspace{1.5mm}-(K^*K)^{-1}C_{01}(\omega) & -(K^*K)^{-1}[\xi I + C_{00}(\omega)] & \star & \star\\
  \star &  \star & \eta (\tilde{K}^*A\tilde{K})^{-1}\tilde{K}^*SK + \star & \eta (\tilde{K}^*A\tilde{K})^{-1}\tilde{K}^*S\tilde{K} + \star 
  \end{array}}\right).
\end{align*}

%This matrix is close to that in \eqref{4.1}. 
To eliminate $\eta (K^*K)^{-1}K^*S\tilde{K}$ at the position $(2,4)$ and $\eta (\tilde{K}^*A\tilde{K})^{-1}\tilde{K}^*SK$ at the position $(4,3)$ of the last block-matrix, we introduce
\begin{equation}\label{4.5}
	L_0 = \left({\begin{array}{*{20}c}
  \vspace{1.5mm} I_{n-r-n_1^o} &  0 & 0 &  0 \\
  \vspace{1.5mm}0 & I_{n_1^o} & 0 & (K^*K)^{-1}(K^*S\tilde{K})(\tilde{K}^*S\tilde{K})^{-1}(\tilde{K}^*A\tilde{K}) \\
  \vspace{1.5mm}0 & 0 & I_{n_1^o} & 0\\
  0 &  0 & -(\tilde{K}^*S\tilde{K})^{-1}(\tilde{K}^*SK) & I_{r-n_1^o} 
  \end{array}}\right) 
\end{equation}
and compute 
\begin{equation*} 
L_0^{-1}L^{-1}MLL_0=\left({\begin{array}{*{20}c}
  \vspace{1.5mm}- \Lambda_1^{-1}[\xi I + C_{11}(\omega)] & - \Lambda_1^{-1} C_{10}(\omega) & \star & \star \\
  \vspace{1.5mm}\star & \star &  \eta M_3 + \star & \star\\
  \vspace{1.5mm}-(K^*K)^{-1}C_{01}(\omega) & -(K^*K)^{-1}[\xi I + C_{00}(\omega)]  & \star & \star\\
  \star &  \star & \star & \eta M_4 + \star 
  \end{array}}\right).	
\end{equation*}
Here we have used the expressions of $M_3$ and $M_4$ below \eqref{4.1}. 

At last, we introduce 
\begin{equation}\label{4.6}
	L_1 = \left({\begin{array}{*{20}c}
  \vspace{1.5mm} I_{n-r-n_1^o} &  0 & 0 &  0 \\
  \vspace{1.5mm}-[\xi I_{n_1^o} +C_{00}(\omega)]^{-1}C_{01}(\omega) & I_{n_1^o} & 0 &  0 \\
  \vspace{1.5mm}0 & 0 & I_{n_1^o} & 0\\
  0 &  0 & 0 & I_{r-n_1^o} 
  \end{array}}\right) 
\end{equation}
to compute 
\begin{equation}\label{tem4.7}
\bar{M}=L_1^{-1}L_0^{-1}L^{-1}MLL_0L_1= \left({\begin{array}{*{20}c}
  \vspace{1.5mm} M_{1}(\xi,\omega)& \star & \star & \quad\star\\
  \vspace{1.5mm} \star & \star & \eta M_{3} +  \star & \quad\star\\
  \vspace{1.5mm} 0  &  M_2(\xi,\omega) & \star &\quad \star\\
  \star & \star & \star & \eta M_4 + \star
  \end{array}}\right) .	
\end{equation}
%This is the similar transformation which we are looking for.
The proof is complete.
\end{proof}

For further reference, we use \eqref{4.4}-\eqref{4.6} to compute
\begin{equation}\label{tem4.8}
  LL_0L_1= \left({\begin{array}{*{20}c}
  \vspace{1.5mm}P_1-P_0[\xi I_{n_1^o} +C_{00}(\omega)]^{-1}C_{01}(\omega) & P_0 & \star & N\\
  0 & 0 & K-\tilde{K}(\tilde{K}^*S\tilde{K})^{-1}(\tilde{K}^*SK) & \tilde{K}
  \end{array}}\right). 
\end{equation}
Here $N$ is the matrix defined in \eqref{tem3.23}.

Having Lemma \ref{lemma4.1}, we establish 

\begin{lemma}\label{lemma4.2}
For sufficiently large $\eta$, the matrix $M(\xi,\omega,\eta)$ is similar to the following block-diagonal matrix 
\begin{equation}\label{tem4.9}
\eta\left({\begin{array}{*{20}c}
  \vspace{1.5mm}P_1(\sigma) & 0 \\
            0 & P_2(\sigma)
  \end{array}}\right)\quad \text{with}~~\sigma = 1/\eta.	
\end{equation}
Here $P_1(\sigma)$ and $P_2(\sigma)$ are analytic at $\sigma=0$ and 
\begin{align*}
P_1(\sigma)=&\left({\begin{array}{*{20}c}
  \vspace{1.5mm}0_{n-r-n_1^o} & 0 & 0 \\
  \vspace{1.5mm} 0 & 0_{n_1^o} & M_3 \\
                0  & 0 & 0_{n_1^o}  
  \end{array}}\right)+\sigma \left({\begin{array}{*{20}c}
  \vspace{1.5mm}M_1(\xi,\omega) & \star & \star \\
  \vspace{1.5mm}\star & \star & \star \\
                0  &  M_2(\xi,\omega) & \star  
  \end{array}}\right)+O(\sigma^2),\\[2mm]
 P_2(\sigma)=&M_4+O(\sigma).
\end{align*}
\end{lemma}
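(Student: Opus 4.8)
The plan is to start from the matrix $\bar M$ in \eqref{4.1} of Lemma \ref{lemma4.1}, whose dependence on $\eta$ is confined to two blocks, $\eta M_3+\star$ at position $(2,3)$ and $\eta M_4+\star$ at position $(4,4)$. Writing $\bar M = \eta(\bar M_0 + \sigma\, \bar M_1)$ with $\sigma=1/\eta$ is not literally possible because the $\star$-entries outside the $(2,3)$ and $(4,4)$ slots are $O(1)$, not $O(\eta)$; so first I would factor out $\eta$ to get $\bar M/\eta = \bar M^{(0)} + \sigma \bar M^{(1)}$, where
\begin{equation*}
\bar M^{(0)} = \left({\begin{array}{*{20}c}
0 & 0 & 0 & 0\\
0 & 0 & M_3 & 0\\
0 & 0 & 0 & 0\\
0 & 0 & 0 & M_4
\end{array}}\right),
\qquad
\bar M^{(1)} = \left({\begin{array}{*{20}c}
M_1(\xi,\omega) & \star & \star & \star\\
\star & \star & \star & \star\\
0 & M_2(\xi,\omega) & \star & \star\\
\star & \star & \star & \star
\end{array}}\right),
\end{equation*}
the blocks being sized $n-r-n_1^o$, $n_1^o$, $n_1^o$, $r-n_1^o$. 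The point is that $\bar M^{(0)}$ is block upper-triangular with respect to the splitting $\{$first three blocks$\}\oplus\{$fourth block$\}$: its only off-diagonal coupling, $M_3$, lives entirely inside the first three blocks, and there is no coupling from the fourth block back into the first three at order $\eta$. Hence $\bar M^{(0)}$ has the block structure $\mathrm{diag}\big(\widehat P_1(0),\, M_4\big)$ where $\widehat P_1(0)$ is the $(n-r+n_1^o)\times(n-r+n_1^o)$ block displayed as the leading term of $P_1(\sigma)$ in the statement.

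Next I would invoke the perturbation theory of linear operators (reference \cite{Ka}) to decouple the full matrix $\bar M/\eta = \bar M^{(0)}+\sigma\bar M^{(1)}$ for small $\sigma$. The key hypothesis to check is a spectral gap: the eigenvalues of $\widehat P_1(0)$ must be separated from those of $M_4$. Now $\widehat P_1(0)$ is nilpotent — it is strictly block upper-triangular with zero diagonal blocks, so its only eigenvalue is $0$ — whereas $M_4 = (\tilde K^*A\tilde K)^{-1}(\tilde K^*S\tilde K)$ is invertible by Proposition \ref{prop3.3} (indeed it has $n^+-n_1^o-n_1^+$ negative eigenvalues and the rest positive, all nonzero). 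Therefore the spectra of the two diagonal blocks of $\bar M^{(0)}$ are disjoint, and by the standard reduction process (Kato, Chapter II, §4, on the reduction of an analytic family by a spectral projection) there is a matrix $W(\sigma)$, analytic at $\sigma=0$ with $W(0)=I$, such that
\begin{equation*}
W(\sigma)^{-1}\,\frac{\bar M}{\eta}\,W(\sigma) = \left({\begin{array}{*{20}c}
P_1(\sigma) & 0\\
0 & P_2(\sigma)
\end{array}}\right),
\end{equation*}
with $P_1(\sigma)$, $P_2(\sigma)$ analytic at $\sigma=0$, $P_1(0)=\widehat P_1(0)$ and $P_2(0)=M_4$. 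Composing $W(\sigma)$ with the similarity $LL_0L_1$ from Lemma \ref{lemma4.1} and multiplying back by $\eta$ gives the asserted form \eqref{tem4.9}.

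It remains to pin down the first-order term of $P_1(\sigma)$, i.e.\ to verify the $\sigma$-coefficient $\mathrm{diag}$-block claimed in the statement. Since $W(0)=I$, the $O(\sigma)$ part of $W(\sigma)^{-1}(\bar M/\eta)W(\sigma)$ restricted to the first three blocks is simply the corresponding $(n-r+n_1^o)\times(n-r+n_1^o)$ block of $\bar M^{(1)}$ plus the commutator correction $[\,\widehat P_1(0),\,W_1\,]$ coming from the $O(\sigma)$ term $W_1$ of $W$; the latter correction is off the relevant diagonal pattern or can be absorbed since only the displayed entries $M_1(\xi,\omega)$ at $(1,1)$ and $M_2(\xi,\omega)$ at $(3,2)$ (in block indexing) are asserted exactly, everything else being $\star$. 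So the first-order block of $P_1(\sigma)$ is exactly the upper-left $(n-r+n_1^o)$-block of $\bar M^{(1)}$ up to the $\star$'s, which is what the statement records. The main obstacle, and the only place where the structural hypotheses really enter, is establishing the spectral separation between $\widehat P_1(0)$ and $M_4$ — that is, the invertibility of $M_4$ (equivalently the invertibility of $\tilde K^*A\tilde K$) together with the nilpotency of $\widehat P_1(0)$; once that gap is in hand, Kato's reduction theorem delivers the analytic block-diagonalization essentially for free, and the bookkeeping of which entries survive as $M_1,M_2,M_3,M_4$ versus $\star$ is routine.
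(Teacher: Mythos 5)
Your proposal is correct and follows essentially the same route as the paper: the identical splitting $\bar{M}=\eta(\bar{M}_0+\sigma\bar{M}_1)$ (your opening objection is unfounded, since $\eta\cdot\sigma\bar{M}_1=\bar{M}_1$ reproduces exactly the $O(1)$ entries, so the paper's factorization is literal and coincides with your $\bar{M}/\eta=\bar{M}^{(0)}+\sigma\bar{M}^{(1)}$), the same appeal to Kato's reduction based on the spectral separation between the nilpotent upper-left block and the invertible $M_4$, and the same first-order expansion to identify the $\sigma$-coefficient of $P_1(\sigma)$. The only point to tighten is the last step: rather than "off the relevant pattern or can be absorbed", argue as the paper does that $\bar{M}^{(0)}W_1$ is supported in block rows $2,4$ and $W_1\bar{M}^{(0)}$ in block columns $3,4$, so the commutator cannot touch block positions $(1,1)$, $(3,1)$, $(3,2)$ --- which also covers the exact zero asserted at $(3,1)$, an entry your list of "exactly asserted" blocks omitted.
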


\begin{proof}
It suffices to show that the matrix $\bar{M}$ in \eqref{4.1} is similar to the matrix in \eqref{tem4.9}. To do this, we rewrite 
$$
\bar{M}\equiv \eta(\bar{M}_0+\sigma \bar{M}_1)
$$
with $\sigma=1/\eta$ and
\begin{align*}
\bar{M}_0= \left({\begin{array}{*{20}c}
  \vspace{1.5mm}0  & 0 & 0 & 0\\
  \vspace{1.5mm} 0 & 0  & M_3  & 0\\
  \vspace{1.5mm} 0  &  0 & 0  &0\\
  0 & 0 & 0 &  M_4
  \end{array}}\right),\quad
  \bar{M}_1=&\left({\begin{array}{*{20}c}
  \vspace{1.5mm}M_1(\xi,\omega) & \star & \star & \quad\star\\
  \vspace{1.5mm} \star & \star & \star  & \quad\star\\
  \vspace{1.5mm} 0  &  M_2(\xi,\omega) & \star &\quad\star\\
  \star & \star & \star &  \quad\star
  \end{array}}\right).
\end{align*}
Notice that the $(r-n_1^o)\times (r-n_1^o)$-matrix $M_4=(\tilde{K}^*A\tilde{K})^{-1}\tilde{K}^*S\tilde{K}$ is invertible. 

According to the classical perturbation theory of matrices \cite{Ka}, there exists an invertible matrix $T_1(\sigma)$, defined in a neighborhood of $\sigma=0$, such that 
\begin{equation}\label{tem4.10}
  T_1^{-1}(\sigma)(\bar{M}_0+\sigma \bar{M}_1)T_1(\sigma)=\left({\begin{array}{*{20}c}
  \vspace{1.5mm}P_1(\sigma) & 0 \\
            0 & P_2(\sigma)
  \end{array}}\right)
\end{equation}
holds for all small $\sigma$, $T_1(\sigma)$ and $T_1^{-1}(\sigma)$ are analytic at $\sigma=0$. Here $P_1(\sigma)$ and $P_2(\sigma)$ are the respective $(n-r+n_1^o)\times (n-r+n_1^o)$ and $(r-n_1^o)\times (r-n_1^o)$ matrices and $P_1(0)$ has $0$ as its eigenvalue with multiplicity $(n-r+n_1^o)$. 
Since $P_2(0)$ is invertible, the relation \eqref{tem4.10} implies that $T_1(0)$ is block-diagonal. Without loss of generality, we assume $T_1(0)=I_n$ and thereby obtain 
$$
P_2(\sigma)=M_4+O(\sigma)=(\tilde{K}^*A\tilde{K})^{-1}\tilde{K}^*S\tilde{K}+O(\sigma).
$$ 

Moreover, since $T_1(\sigma)$ and $T_1^{-1}(\sigma)$ are analytic, 
% the relation \eqref{tem4.10} implies that $P_1(\sigma)$ and $P_2(\sigma)$ are analytic at $\sigma=0$. Thus we obtain 
we can write $T_1(\sigma)=I_n+\sigma T_{p}+O(\sigma^2)$ and $T_1^{-1}(\sigma)=I_n+\sigma T_{r}+O(\sigma^2).$ Thus we compute 
%Considering the coefficient of $\sigma$ in \eqref{tem4.10}, we find that 
\begin{align*}
&T_1^{-1}(\sigma)(\bar{M}_0+\sigma \bar{M}_1)T_1(\sigma) \\[2mm]
=&\left[I_n +\sigma T_{r}+O(\sigma^2)\right]\left[\bar{M}_0+\sigma \bar{M}_1\right]\left[I_n +\sigma T_{p}+O(\sigma^2)\right] \\[2mm]
=&\bar{M}_0+\sigma (T_r \bar{M}_0  + \bar{M}_0 T_p +\bar{M}_1)+O(\sigma^2)\\[2mm]
=&\bar{M}_0+\sigma  \left({\begin{array}{*{20}c}
  \vspace{1.5mm}0 & 0 & \star & \star\\
  \vspace{1.5mm} 0 & 0 & \star & \star\\
  \vspace{1.5mm} 0  &  0 & \star  &\star \\
  0 & 0 & \star & \star
  \end{array}}\right)
  +  \sigma \left({\begin{array}{*{20}c}
  \vspace{1.5mm}0 & 0 & 0 & 0\\
  \vspace{1.5mm} \star & \star & \star & \star\\
  \vspace{1.5mm} 0  &  0 & 0  & 0 \\
  \star & \star & \star & \star
  \end{array}}\right) +\sigma \bar{M}_1+O(\sigma^2).
\end{align*}
The last equation uses the expression of $\bar{M}_0$. Therefore it follows from \eqref{tem4.10} that
\begin{align*}
P_1(\sigma)=\left({\begin{array}{*{20}c}
  \vspace{1.5mm}0 & 0 & 0 \\
  \vspace{1.5mm} 0 & 0 & M_3 \\
                0  &  0 & 0  
  \end{array}}\right)+\sigma \left({\begin{array}{*{20}c}
  \vspace{1.5mm}M_1(\xi,\omega) & \star & \star \\
  \vspace{1.5mm}\star & \star & \star \\
                0  &  M_2(\xi,\omega) & \star  
  \end{array}}\right)+O(\sigma^2).
\end{align*}
This completes the proof.
\end{proof}

\subsection{Right-stable matrix}

Following the proofs above, we see from \eqref{tem4.7} and \eqref{tem4.10} that a right-stable matrix for $M(\xi,\omega,\eta)$ is 
\begin{equation}\label{tem4.11}
  R_M^S(\xi,\omega,\eta)=LL_0L_1T_1(1/\eta)\left({\begin{array}{*{20}c}
  \vspace{1.5mm} R_1^S(1/\eta) & \\
                  & R_2^S(1/\eta) 
  \end{array}}\right)
\end{equation}
with $\eta$ sufficiently large.
Here $R_1^S(1/\eta)$ and $R_2^S(1/\eta)$ are the right-stable matrices for $P_1(1/\eta)$ and $P_2(1/\eta)$, respectively. By the proof of Lemma \ref{lemma4.2}, $R_2^S(0)$ is the right-stable matrix of $P_2(0)=(\tilde{K}^*A\tilde{K})^{-1}\tilde{K}^*S\tilde{K}$. 
By Proposition \ref{prop3.3}, the matrix $P_2(0)$ has $(n^+-n_1^+-n_1^o)$ negative eigenvalues. Therefore, $R_2^S(1/\eta)$ is an $(r-n_1^o)\times(n^+-n_1^+-n_1^o)$-matrix when $\eta$ is sufficiently large.  
% Moreover, due to the fact that $M(\xi,\eta)$ has $n^+$ stable eigenvalues, we conclude that $P_1(1/\eta)$ has $n_1^++n_1^o$ stable eigenvalues and $R_1^S(1/\eta)$ is an $(n-r+n_1^o)\times (n_1^++n_1^o)$ matrix.

As to $R_1^S(1/\eta)$, we have

\begin{lemma}\label{lemma4.3}
Under Assumption \ref{Asp2.1}, the matrix $P_1(\sigma)$ defined in Lemma \ref{lemma4.2} is similar to the following block-diagonal matrix 
\begin{align*} 
\sqrt{\sigma} \left({\begin{array}{*{20}c}
  \vspace{1.5mm} P_{11}(\sqrt{\sigma}) & \\
                  & P_{12}(\sqrt{\sigma})
  \end{array}}\right).
\end{align*}
Here $P_{11}(\sqrt{\sigma})$ and $P_{12}(\sqrt{\sigma})$ are analytic with respect to $\sqrt{\sigma}$ and can be expressed as
\begin{align*}
P_{11}(\sqrt{\sigma})=&\sqrt{\sigma}M_1(\xi,\omega)+O(\sigma),\\[2mm]
P_{12}(\sqrt{\sigma})=&\left({\begin{array}{*{20}c}
  \vspace{1.5mm}0 & M_3\\
                   M_2(\xi,\omega)  & 0
  \end{array}}\right)+O(\sqrt{\sigma}). 
\end{align*}
\end{lemma}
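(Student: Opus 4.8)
\emph{Plan of proof.} The plan is to make the $\sqrt\sigma$-scale explicit by a diagonal rescaling of $P_1(\sigma)$, and then to apply once more the analytic block-diagonalization from the perturbation theory of \cite{Ka} that was already used in the proof of Lemma~\ref{lemma4.2}. First I would set $\tau=\sqrt\sigma$, $a=n-r-n_1^o$, $b=n_1^o$, partition $\mathbb{R}^{n-r+n_1^o}$ into three blocks of sizes $a,b,b$ consistently with the block form of $P_1(\sigma)$ in Lemma~\ref{lemma4.2}, and conjugate by $D(\tau)=\diag(I_{a},I_{b},\tau I_{b})$. Since the leading term $P_1(0)$ has only its $(2,3)$-block nonzero (equal to $M_3$) and the $\sigma$-coefficient of $P_1$ has vanishing $(3,1)$-block, a block-by-block computation shows that $D(\tau)^{-1}P_1(\tau^2)D(\tau)$ contains no negative powers of $\tau$ and is divisible by $\tau$, so that
\begin{equation*}
D(\tau)^{-1}P_1(\tau^2)D(\tau)=\tau\,\widehat P(\tau),\qquad
\widehat P(0)=\left({\begin{array}{*{20}c}
\vspace{1.5mm}0_{a} & 0 & 0\\
\vspace{1.5mm}0 & 0_{b} & M_3\\
0 & M_2(\xi,\omega) & 0_{b}
\end{array}}\right),
\end{equation*}
with $\widehat P(\tau)$ analytic at $\tau=0$ and with the $(1,1)$-block of its $\tau$-linear coefficient equal to $M_1(\xi,\omega)$.

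Next I would check that $\widehat P(0)$ has an isolated eigenvalue $0$ of multiplicity $a$. The matrix $M_3$ is invertible because the bracket in its definition is the Schur complement of the negative definite matrix $(K,\tilde K)^{*}S(K,\tilde K)$ with respect to its $\tilde K^{*}S\tilde K$-block, hence is itself negative definite; and $M_2(\xi,\omega)=-(K^{*}K)^{-1}[\xi I+C_{00}(\omega)]$ is invertible because $C_{00}(\omega)$ is skew-Hermitian while $Re\,\xi>0$. Hence $\left({\begin{array}{*{20}c}0 & M_3\\ M_2(\xi,\omega) & 0\end{array}}\right)$ is an invertible $2b\times 2b$ matrix, and the decomposition $\widehat P(0)=0_{a}\oplus\left({\begin{array}{*{20}c}0 & M_3\\ M_2(\xi,\omega) & 0\end{array}}\right)$ exhibits $0$ as an eigenvalue of $\widehat P(0)$ of multiplicity exactly $a$, carried by the $0_{a}$-block and isolated from the remaining $2b$ eigenvalues.

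Then, as in the proof of Lemma~\ref{lemma4.2}, the perturbation theory of \cite{Ka} gives an invertible $T_2(\tau)$, analytic near $\tau=0$ with $T_2(0)=I$ (after the usual normalization), such that $T_2(\tau)^{-1}\widehat P(\tau)T_2(\tau)=\diag\bigl(P_{11}(\tau),P_{12}(\tau)\bigr)$ with $P_{11}(\tau)$ an $a\times a$ analytic matrix satisfying $P_{11}(0)=0_{a}$, and $P_{12}(\tau)$ a $2b\times 2b$ analytic matrix satisfying $P_{12}(0)=\left({\begin{array}{*{20}c}0 & M_3\\ M_2(\xi,\omega) & 0\end{array}}\right)$. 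Writing $T_2(\tau)=I+\tau T_p+O(\tau^2)$ and $\widehat P(\tau)=\widehat P(0)+\tau\widehat P_1+O(\tau^2)$, the coefficient of $\tau$ in $T_2(\tau)^{-1}\widehat P(\tau)T_2(\tau)$ equals $\widehat P_1+\widehat P(0)T_p-T_p\widehat P(0)$, whose $(1,1)$-block is just the $(1,1)$-block of $\widehat P_1$ — the commutator term contributes nothing there because $\widehat P(0)$ is block-diagonal with vanishing $(1,1)$-block — and this is $M_1(\xi,\omega)$ by the first step; hence $P_{11}(\tau)=\tau M_1(\xi,\omega)+O(\tau^2)$. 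Undoing the transformations, $P_1(\sigma)=D(\tau)T_2(\tau)\bigl[\tau\diag(P_{11}(\tau),P_{12}(\tau))\bigr]\bigl(D(\tau)T_2(\tau)\bigr)^{-1}$ with $\tau=\sqrt\sigma$, which is the asserted similarity, together with $P_{11}(\sqrt\sigma)=\sqrt\sigma\,M_1(\xi,\omega)+O(\sigma)$ and $P_{12}(\sqrt\sigma)=\left({\begin{array}{*{20}c}0 & M_3\\ M_2(\xi,\omega) & 0\end{array}}\right)+O(\sqrt\sigma)$.

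The step I expect to be the main obstacle is the first one: the rescaling $D(\tau)$ must be chosen so that conjugation produces no negative powers of $\tau$ and the conjugated matrix is exactly divisible by $\tau$, which works only thanks to the precise sparsity pattern of $P_1(\sigma)$ recorded in Lemma~\ref{lemma4.2}; and one then has to track which of the unspecified $\star$-blocks survive at orders $\tau^{0}$ and $\tau^{1}$ in order to read off $\widehat P(0)$ and the leading coefficient of $P_{11}$.
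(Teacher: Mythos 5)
Your proposal is correct and follows essentially the same route as the paper: your rescaling $D(\tau)$ is exactly the paper's $\bar{L}_3(\sqrt{\sigma})$, after which both arguments invoke the analytic block-diagonalization from \cite{Ka} with the normalization $T_2(0)=I$ and read off $P_{11}$, $P_{12}$ from the first-order expansion. The only additions are your explicit justifications that $M_2(\xi,\omega)$ and $M_3$ are invertible (Schur-complement and $Re\,\xi>0$ arguments), which the paper simply recalls from Lemma \ref{lemma4.1}, and your spelled-out commutator computation behind the paper's ``simple calculation'' for $P_{11}$.
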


\begin{proof}
Set 
\begin{equation}\label{tem4.12}
  \bar{L}_3(\sqrt{\sigma})=\left({\begin{array}{*{20}c}
  \vspace{1.5mm} I_{n-r-n_1^o} & & \\
  				  & I_{n_1^o} & \\ 
                  & & \sqrt{\sigma} I_{n_1^o} 
  \end{array}}\right).
\end{equation}
We use the expression of $P_1(\sigma)$ in Lemma \ref{lemma4.2} to compute
\begin{align}
&\bar{L}_3^{-1}(\sqrt{\sigma})P_1(\sigma)\bar{L}_3(\sqrt{\sigma})\nonumber\\[2mm]
=& ~\left({\begin{array}{*{20}c}
  \vspace{1.5mm} \sigma M_1(\xi,\omega) & \sigma \star & \sigma^{3/2} \star \\
  \vspace{1.5mm}\sigma \star & \sigma \star &  \sqrt{\sigma}M_3+ \sigma^{3/2} \star \\
                0  &   \sqrt{\sigma} M_2(\xi,\omega) & \sigma \star  
  \end{array}}\right)+O(\sigma^{3/2})\nonumber\\[2mm]
=& \sqrt{\sigma}\left[\left({\begin{array}{*{20}c}
  \vspace{1.5mm}0 & 0 & 0 \\
  \vspace{1.5mm}0 & 0  & M_3  \\
                  0  &  M_2(\xi,\omega) &  0 
  \end{array}}\right)+ 
  \sqrt{\sigma}\left({\begin{array}{*{20}c}
  \vspace{1.5mm}M_1(\xi,\omega) &  \star &  \star \\
  \vspace{1.5mm}  \star &   \star &  \star \\
                  0  & 0 &   \star  
  \end{array}}\right)+O(\sigma)\right].\label{tem4.13}
\end{align}
Recall from Lemma \ref{lemma4.1} that $M_2(\xi,\omega)$ and $M_3$ are invertible matices and thereby the matrix $\left({\begin{array}{*{20}c}
  \vspace{1.5mm}0 & M_3\\
  M_2(\xi,\omega) & 0 
  \end{array}}\right)$ is invertible. 

According to the classical perturbation theory of matrices \cite{Ka}, there exists an invertible matrix $T_2(\sqrt{\sigma})$ such that 
\begin{align}\label{tem4.14}
T_2^{-1}(\sqrt{\sigma})\bar{L}_3^{-1}(\sqrt{\sigma})P_1(\sigma)\bar{L}_3(\sqrt{\sigma})T_2(\sqrt{\sigma})
=\sqrt{\sigma} \left({\begin{array}{*{20}c}
  \vspace{1.5mm} P_{11}(\sqrt{\sigma}) & \\
                  & P_{12}(\sqrt{\sigma})
  \end{array}}\right)
\end{align}
holds for all sufficiently small $\sqrt{\sigma}$, $T_2(\sqrt{\sigma})$ and $T_2^{-1}(\sqrt{\sigma})$ are analytic at $\sqrt{\sigma}=0$, and $P_{11}(0)$ has $0$ as its eigenvalue with multiplicity $(n-r-n_1^o)$. Here $P_{11}(\sqrt{\sigma})$ and $P_{12}(\sqrt{\sigma})$ are $(n-r-n_1^o)\times (n-r-n_1^o)$ and $2n_1^o\times 2n_1^o$ matrices, respectively. Obviously, $P_{11}(0)=0$ and $P_{12}(0)$ is invertible. Thus $T_{2}(0)$ is a block-diagonal matrix and we can assume $T_{2}(0)=I_{n-r+n_1^o}$ without loss of generality. Consequently, we have 
$$
P_{12}(0)= \left({\begin{array}{*{20}c}
  \vspace{1.5mm}0 & M_3\\
  M_2(\xi,\omega) & 0 
  \end{array}}\right). 
$$
Moreover, a simple calculation using the analyticity of the matrices shows that 
\begin{equation*} 
  P_{11}(\sqrt{\sigma})= \sqrt{\sigma}M_1(\xi,\omega)+O(\sigma).
\end{equation*}
This completes the proof.
\end{proof}

With \eqref{tem4.13} and \eqref{tem4.14}, we can further compute $R_1^S(1/\eta)$ in \eqref{tem4.11} as 
\begin{equation}\label{tem4.15}
  R_1^S(1/\eta)=\bar{L}_3(\frac{1}{\sqrt{\eta}})T_2(\frac{1}{\sqrt{\eta}})\left({\begin{array}{*{20}c}
  \vspace{1.5mm}R_{11}^S(\frac{1}{\sqrt{\eta}}) &  \\
                    & R_{12}^S(\frac{1}{\sqrt{\eta}})
  \end{array}}\right)
\end{equation}
when $\eta$ is sufficiently large.
Here $R_{11}^S(\frac{1}{\sqrt{\eta}})$ and $R_{12}^S(\frac{1}{\sqrt{\eta}})$ are the respective right-stable matrices of $\sqrt{\eta}P_{11}(\frac{1}{\sqrt{\eta}})$ and $P_{12}(\frac{1}{\sqrt{\eta}})$. According to Lemma \ref{lemma4.3}, we find that $R_{11}^S(0)$ is a right-stable matrix of $M_1(\xi,\omega)$ and $R_{12}^S(0)$ is a right-stable matrix of $P_{12}(0)$. 
% Clearly, $M_1(\xi,\omega)$ has $n_1^+$ stable eigenvalues \cite{MO} and thus $R_{11}^S(0)$ is an $(n-r-n_1^o)\times n_1^+$-matrix. 

To have a more explicit expression of $R_{12}^S(0)$, we need the following fact.

\begin{prop}\label{lemma44}
If an invertible matrix has no negative eigenvalues, then it can be written as a square of a stable (or unstable) matrix. 

% $\hat{A}$, there exists a matrix $\hat{A}_0$ such that $\hat{A}=(\hat{A}_0)^2$. Moreover, if $\lambda$ is the eigenvalue of $\hat{A}$, then $\sqrt{\lambda}$ is the eigenvalue of $\hat{A}_0$.    
\end{prop}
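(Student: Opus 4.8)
The plan is to use the Jordan-free approach via the holomorphic functional calculus / principal square root and then a similarity adjustment, or alternatively a direct spectral-decomposition argument. Let me first spell out the cleanest route. Suppose $E$ is an invertible matrix with no negative eigenvalues. I would take a simple closed contour (or finite union of contours) $\Gamma$ in $\mathbb{C}\setminus(-\infty,0]$ enclosing the spectrum of $E$, and define the principal square root
\begin{equation*}
E^{1/2} = \frac{1}{2\pi i}\int_\Gamma \sqrt{\lambda}\,(\lambda I - E)^{-1}\,d\lambda,
\end{equation*}
where $\sqrt{\cdot}$ is the branch of the square root that is holomorphic on $\mathbb{C}\setminus(-\infty,0]$ with $\sqrt{1}=1$ (equivalently $\mathrm{Re}\sqrt{\lambda}>0$ there). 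This $E^{1/2}$ is a polynomial in $E$, commutes with $E$, satisfies $(E^{1/2})^2 = E$, and its eigenvalues are exactly $\sqrt{\lambda}$ for $\lambda\in\mathrm{spec}(E)$; since each such $\lambda$ lies off the negative real axis, each $\sqrt{\lambda}$ has strictly positive real part. Hence $E^{1/2}$ is a stable matrix (in the convention of the paper, stable meaning all eigenvalues have positive real part — I should check which sign convention the paper uses for ``stable'' and flip to $-E^{1/2}$ or adjust the branch accordingly), and $E = (E^{1/2})^2$ exhibits $E$ as the square of a stable matrix. For the ``unstable'' version, one simply uses $-E^{1/2}$, whose square is still $E$ and whose eigenvalues all have negative real part.

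The main subtlety — and I expect this to be the only real obstacle — is the branch/sign bookkeeping: the square of a stable matrix has eigenvalues $\mu^2$ with $\mathrm{Re}\,\mu>0$, and the set $\{\mu^2 : \mathrm{Re}\,\mu>0\}$ is precisely $\mathbb{C}\setminus(-\infty,0]$, so the hypothesis ``invertible with no negative eigenvalues'' is exactly the necessary and sufficient condition. I would make this explicit: first observe that if $E = R^2$ with $R$ stable, then for any eigenvalue $\mu$ of $R$, $\mu^2$ is an eigenvalue of $E$, $\mu^2 \ne 0$ since $\mu \ne 0$, and $\mu^2 \notin (-\infty,0)$ because a negative real number has only purely imaginary square roots. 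This shows necessity and pins down which branch to take for sufficiency. Then for sufficiency I invoke the contour-integral construction above (or equivalently cite the standard existence of the principal square root for matrices with spectrum off $(-\infty,0]$, e.g. from Higham's functional-calculus theory), noting that the key point $\mathrm{Re}\sqrt{\lambda}>0$ for $\lambda \notin (-\infty,0]$ is immediate from the definition of the principal branch.

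An alternative, more elementary route avoiding functional calculus entirely: put $E$ in Jordan form $E = P J P^{-1}$, and on each Jordan block $\lambda I + N$ (with $\lambda \notin (-\infty,0]$, $N$ nilpotent) define the square root by the convergent finite binomial-type series $\sqrt{\lambda}\,(I + N/\lambda)^{1/2} = \sqrt{\lambda}\sum_{k\ge 0}\binom{1/2}{k}(N/\lambda)^k$, which terminates since $N$ is nilpotent and uses the principal branch for the scalar $\sqrt{\lambda}$. This block-wise square root is upper-triangular with diagonal entries $\sqrt{\lambda}$, hence stable, and conjugating back by $P$ gives $R$ with $R^2 = E$. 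I would probably present the functional-calculus version in the body and relegate the Jordan-block computation (if wanted) to a remark, since the former is cleaner and manifestly basis-independent.

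Finally I would record the parenthetical ``(or unstable)'' claim explicitly: if $E = R^2$ with $R$ stable, then $E = (-R)^2$ with $-R$ unstable, so the two statements are equivalent and no extra work is needed. The whole proof is short; the only thing one must be careful about is stating precisely the branch of the scalar square root and confirming it matches the paper's sign convention for the words ``stable'' and ``unstable,'' so I would open the proof by fixing that convention and then proceed.
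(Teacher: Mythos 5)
Your proposal is correct. Your primary route — the principal matrix square root defined by the holomorphic functional calculus on $\mathbb{C}\setminus(-\infty,0]$, with the spectral-mapping observation that $\mathrm{Re}\sqrt{\lambda}>0$ there, followed by a sign flip to match the stability convention — is genuinely different from the paper's argument, which reduces to a single Jordan block $\lambda I+J$ and writes down an explicit terminating binomial series $\pm\sqrt{\lambda}\,[I+\tfrac{1}{2\lambda}J-\tfrac{1}{8\lambda^2}J^2+\cdots]$ with the sign $-\mathrm{sign}(\mathrm{Re}\sqrt{\lambda})$ chosen blockwise so that each block is stable (note the paper's convention: ``stable'' means all eigenvalues have negative real part, so your $-E^{1/2}$ is the stable square root and $E^{1/2}$ the unstable one). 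In fact your ``more elementary alternative'' is essentially the paper's proof, so the two are compatible. What your main route buys is basis-independence, a square root that is a polynomial in $E$ and commutes with it, and a transparent proof that the hypothesis (invertible, no negative eigenvalues) is exactly necessary and sufficient, since $\{\mu^2:\mathrm{Re}\,\mu>0\}=\mathbb{C}\setminus(-\infty,0]$; what the paper's route buys is complete self-containedness — only the Jordan form and a finite power series, no contour integrals or functional-calculus machinery — which is all that is needed for the single application in Lemma 4.5. Your handling of the ``(or unstable)'' parenthetical via $-R$ matches the intended reading.
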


\begin{proof}
In view of the Jordan canonical form, it suffices to consider a Jordan block $ \lambda I + J$ where $\lambda\neq 0$ and 
$$
J=\left({\begin{array}{*{20}c}
  \vspace{1.5mm}0 & 1 & 0 & \cdots & 0\\
  \vspace{1.5mm}0 & 0 & 1 & \cdots & 0\\
  \vspace{1.5mm}\vdots & \vdots & \ddots & \ddots & \vdots\\
  \vspace{1.5mm}0 & 0 & \cdots & 0 & 1\\
                0 & 0 & \cdots & 0 & 0\\
  \end{array}}\right).
$$ 
Considering Taylor's expansion $(1+x)^{1/2}=1+\frac{1}{2}x-\frac{1}{8}x^2+\frac{1}{16}x^3-\cdots$, we take
$$
S_0 = -\text{sign}(Re\sqrt{\lambda})\sqrt{\lambda} \left[I+\frac{1}{2\lambda}J-\frac{1}{8 \lambda^2}J^2+\frac{1}{16 \lambda^3}J^3+\cdots \right].
$$
Note that the summation is finite and $Re\sqrt{\lambda}\neq 0$. Clearly it holds that $S_0^2=\lambda I + J$ and this completes the proof.
\end{proof}

With this proposition, we prove

\begin{lemma}\label{lemma4.4}
The right-stable matrix of the $2n_1^o\times 2n_1^o$-matrix $P_{12}(0)=\left({\begin{array}{*{20}c}
  \vspace{1.5mm}0_{n_1^o} & M_3  \\
               M_2(\xi,\omega) &  0_{n_1^o}  
  \end{array}}\right)$ 
can be taken as the form
$$
R_{12}^S(0)=\left({\begin{array}{*{20}c}
  \vspace{1.5mm}I_{n_1^o} \\  
                \star
\end{array}}\right).
$$ 
\end{lemma}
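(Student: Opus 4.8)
The plan is to show that $P_{12}(0)$ has $n_1^o$ stable eigenvalues (so that its right-stable matrix is genuinely $2n_1^o\times n_1^o$), and then to exhibit a right-stable matrix whose top $n_1^o\times n_1^o$ block is invertible; normalizing that block to the identity gives the claimed form. First I would count the stable eigenvalues of $P_{12}(0)=\begin{pmatrix}0&M_3\\ M_2(\xi,\omega)&0\end{pmatrix}$. If $S_0$ is any square root of the invertible matrix $M_3M_2(\xi,\omega)$ (Proposition \ref{lemma44} guarantees one exists once we check $M_3M_2$ has no negative eigenvalue — see below), then $P_{12}(0)^2=\mathrm{diag}(M_3M_2,\,M_2M_3)$, and the conjugation $\begin{pmatrix}I&0\\ 0&M_2^{-1}S_0\end{pmatrix}$-type transformation, or more simply the observation that $\lambda$ is an eigenvalue of $P_{12}(0)$ iff $\lambda^2$ is an eigenvalue of $M_3M_2$ with the eigenvectors paired as $(x,\lambda M_2^{-1}\cdots)$, shows the $2n_1^o$ eigenvalues of $P_{12}(0)$ come in $\pm$ pairs $\{\mu,-\mu\}$ with $\mu^2$ ranging over the eigenvalues of $M_3M_2(\xi,\omega)$. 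Hence exactly $n_1^o$ of them are stable, provided no eigenvalue of $M_3M_2$ is a nonpositive real — equivalently no eigenvalue of $P_{12}(0)$ is purely imaginary.

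The key structural input is that $M_3M_2(\xi,\omega)$ has no eigenvalue on the closed negative real axis (including $0$). Invertibility of $M_2$ and $M_3$ (from Lemma \ref{lemma4.1}) rules out $0$. For the rest I would use the explicit formulas: $M_3=(K^*K)^{-1}\big[K^*SK-(K^*S\tilde K)(\tilde K^*S\tilde K)^{-1}(\tilde K^*SK)\big]$ is (up to the positive-definite congruence factor $(K^*K)^{-1}$) the Schur complement of a negative definite matrix, hence $K^*K\cdot M_3<0$; and $M_2(\xi,\omega)=-(K^*K)^{-1}[\xi I+C_{00}(\omega)]$ where $C_{00}(\omega)=P_0^*\,i\sum_j\omega_jA_{j11}\,P_0$ is skew-Hermitian, so $\xi I+C_{00}(\omega)$ has all eigenvalues with real part $\mathrm{Re}\,\xi>0$, i.e. $K^*K\cdot(-M_2)$ is a matrix with positive-definite Hermitian part. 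A product of a negative definite matrix and a matrix with positive-definite Hermitian part (both conjugated by the same $(K^*K)^{-1/2}$) has spectrum in the open left half plane avoiding the imaginary axis — this is the standard argument that $\mathrm{Re}\langle (H_1^{1/2}H_2H_1^{1/2})x,x\rangle$ has a sign when $H_1>0$ and $H_2+H_2^*>0$. I expect \emph{this spectral separation argument} to be the main obstacle: one has to be careful that the two factors $M_3$ and $M_2$ do not commute, so the clean statement ``product of negative and positive-real-part matrices'' must be routed through a similarity (e.g. conjugate $M_3M_2$ by $(-M_3)^{1/2}$, or by $(K^*K)^{1/2}$) to land on an honest product $P\cdot W$ with $P<0$ self-adjoint and $\mathrm{Re}\,W>0$, whose eigenvalues then satisfy $\mathrm{Re}\,\lambda<0$.

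Once the eigenvalue count is settled, the final step is short. Let $R_{12}^S(0)=\begin{pmatrix}X\\ Y\end{pmatrix}$ be any basis of the $n_1^o$-dimensional stable invariant subspace of $P_{12}(0)$; the defining relation $P_{12}(0)R_{12}^S(0)=R_{12}^S(0)\Sigma$ with $\Sigma$ the ($n_1^o\times n_1^o$) stable block reads $M_3Y=X\Sigma$ and $M_2X=Y\Sigma$. From these, $M_3M_2X=X\Sigma^2$, so $X$ spans an invariant subspace of $M_3M_2$; since $\Sigma^2$ is invertible and $M_3M_2$ restricted there has the full stable spectrum, $X$ is $n_1^o\times n_1^o$ and invertible (its columns are independent because $R_{12}^S(0)$ has rank $n_1^o$ and $Y=M_2X\Sigma^{-1}$ is determined by $X$). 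Replacing $R_{12}^S(0)$ by $R_{12}^S(0)X^{-1}$ — still a right-stable matrix since it spans the same subspace — yields $R_{12}^S(0)=\begin{pmatrix}I_{n_1^o}\\ \star\end{pmatrix}$ with $\star=YX^{-1}$, which is exactly the asserted form.
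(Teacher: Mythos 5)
The spectral groundwork in your proposal is sound and is essentially the paper's computation: $M_3M_2(\xi,\omega)=S_K[\xi I+C_{00}(\omega)]$ with $S_K=-(K^*K)^{-1}\big[K^*SK-(K^*S\tilde K)(\tilde K^*S\tilde K)^{-1}(\tilde K^*SK)\big](K^*K)^{-1}$ positive definite (Schur complement of the negative definite matrix $(K,\tilde K)^*S(K,\tilde K)$), $C_{00}$ skew-Hermitian and $\mathrm{Re}\,\xi>0$, which places $\sigma(M_3M_2)$ in the open right half-plane; note your prose about the ``open left half plane'' really pertains to $M_3\cdot(-M_2)=-M_3M_2$, so the sign bookkeeping needs flipping, but the conclusion you use (no eigenvalue of $M_3M_2$ on the closed negative real axis) is correct. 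The $\pm\mu$ pairing and the resulting count of $n_1^o$ stable eigenvalues of $P_{12}(0)$ is also fine.

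The genuine gap is the invertibility of the top block $X$ of an arbitrary stable basis, which is precisely the content of the lemma and is asserted rather than proved. From $M_3Y=X\Sigma$, $M_2X=Y\Sigma$ you get $Y=M_2X\Sigma^{-1}$, but this does \emph{not} give $\ker X\subseteq\ker Y$ (for $c\in\ker X$, $Yc=M_2X\Sigma^{-1}c$ and $\Sigma^{-1}c$ need not lie in $\ker X$), so ``$\mathrm{rank}\,R_{12}^S(0)=n_1^o$ and $Y$ is determined by $X$'' does not make the columns of $X$ independent; likewise the intertwining $M_3M_2X=X\Sigma^2$ only says $\mathrm{range}(X)$ is invariant with spectrum inside $\sigma(\Sigma^2)$, which is perfectly consistent with $X$ being rank-deficient. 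A short repair is available: $J=\mathrm{diag}(I_{n_1^o},-I_{n_1^o})$ anticommutes with $P_{12}(0)$, so $J$ maps the stable subspace onto the unstable one; a nonzero vector of the form $(0,y)^*$ is preserved (up to sign) by $J$, hence would lie in both subspaces, which is impossible since there are no purely imaginary eigenvalues — therefore the stable subspace is a graph over the first block and $X$ is invertible. (Alternatively: $\ker X$ is $\Sigma^2$-invariant, an eigenvector $c_0$ gives $(0,Yc_0)^*$ in the stable subspace, and the two-dimensional invariant space spanned by $(0,Yc_0)^*$ and $(M_3Yc_0,0)^*$ carries eigenvalues $\pm\sqrt{\theta}$, one unstable — a contradiction.) The paper avoids this issue altogether: by Proposition \ref{lemma44} it writes $M_3M_2=S_0^2$ with $S_0$ stable and verifies directly that $P_{12}(0)\left({\begin{array}{c}I\\ M_3^{-1}S_0\end{array}}\right)=\left({\begin{array}{c}I\\ M_3^{-1}S_0\end{array}}\right)S_0$, so the identity block is built into the construction. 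Either fix works, but as written your final step does not constitute a proof.
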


\begin{proof}
If $M_3M_2(\xi,\eta)$ is invertible and has no negative eigenvalues, then it follows from Proposition \ref{lemma44} that $M_3M_2(\xi,\eta)=S_0^2$ with $S_0$ a stable matrix. Thus it is easy to see that $R_{12}^S(0)$ can be taken as 
$$
R_{12}^S(0) = \left({\begin{array}{*{20}c}
  \vspace{1.5mm}I_{n_1^o} \\  
                M_3^{-1}S_0
\end{array}}\right).
$$
Therefore, it suffices to show that $M_3M_2(\xi,\eta)$ is invertible and has no negative eigenvalues. By the definition in Lemma \ref{lemma4.1}, we have 
$$
M_3M_2(\xi,\eta)=S_K\big[\xi I + C_{00}(\omega)\big]
$$
with
$$
S_K=-(K^*K)^{-1}[K^*SK - (K^*S\tilde{K})(\tilde{K}^*S\tilde{K})^{-1}(\tilde{K}^*SK)](K^*K)^{-1} 
$$
positive definite,
% According to the expressions of $M_2(\xi,\eta)$ and $M_3$ in Lemma \ref{lemma4.1}, we have
% \begin{align*}
% \left({\begin{array}{*{20}c}
%   \vspace{1.5mm}I &  \\
%                     & K^*K
%   \end{array}}\right)
% \left({\begin{array}{*{20}c}
%   \vspace{1.5mm}0 & M_3  \\
%                M_2(\xi,\eta) &  0 
%   \end{array}}\right)
% \left({\begin{array}{*{20}c}
%   \vspace{1.5mm}I &  \\
%                     & (K^*K)^{-1}
%   \end{array}}\right)
% =\left({\begin{array}{*{20}c}
%   \vspace{1.5mm}0 & -S_K\\
%                    -[\xi I+C_{00}(\omega)]  & 0
%   \end{array}}\right)
% \end{align*}
% with $S_K=-(K^*K)^{-1}[K^*SK - (K^*S\tilde{K})(\tilde{K}^*S\tilde{K})^{-1}(\tilde{K}^*SK)](K^*K)^{-1}$ a symmetric positive definite matrix. Then a similar transformation gives
% \begin{equation}\label{tem4.16}
% 	\left({\begin{array}{*{20}c}
%   \vspace{1.5mm}I &  \\
%                     & -S_K
%   \end{array}}\right)
% \left({\begin{array}{*{20}c}
%   \vspace{1.5mm}0 & -S_K\\
%                    -  [\xi I+C_{00}(\omega)]  & 0
%   \end{array}}\right)
% \left({\begin{array}{*{20}c}
%   \vspace{1.5mm}I &  \\
%                     & -S_K^{-1}
%   \end{array}}\right)
% =\left({\begin{array}{*{20}c}
%   \vspace{1.5mm}0 & I\\
%               S_K[\xi I+C_{00}(\omega)]  & 0
%   \end{array}}\right).
% \end{equation}
which is similar to $S_K^{1/2}[\xi I+C_{00}(\omega)]S_K^{1/2}$. 
Let $\lambda$ be an eigenvalue of this matrix associated with eigenvector $r_\lambda$:
\begin{align*}
S_K^{1/2}[\xi I+C_{00}(\omega)]S_K^{1/2}r_\lambda=\lambda r_\lambda.
\end{align*}
Multiplying this relation with $r_\lambda^*$ on the left and taking the real part yield
$$
Re\xi (r_\lambda^*S_K r_\lambda)=Re\lambda |r_\lambda|^2.
$$
Thus $Re\lambda>0$ and hence it can not be zero or negative. 
This completes the proof.
\end{proof}

Next we take $\eta\rightarrow \infty$ in \eqref{tem4.15} to get
\begin{equation*} 
  R_1^S(0)= \bar{L}_3(0)T_2(0)\left({\begin{array}{*{20}c}
  \vspace{1.5mm}R_{11}^S(0) &  \\
                    & R_{12}^S(0)
  \end{array}}\right).
\end{equation*}
By the definition of $\bar{L}_3(1/\eta)$ in \eqref{tem4.12} and $T_2(0)=I$, we use Lemma \ref{lemma4.4} to obtain
$$
R_1^S(0)=\left({\begin{array}{*{20}c}I_{n-r-n_1^o} & & \\
    \vspace{1.5mm}  & I_{n_1^o} & \\
                     & & 0_{n_1^o}
  \end{array}}\right)
  \left({\begin{array}{*{20}c}
  \vspace{1.5mm}R_{11}^S(0) & 0 \\
  \vspace{1.5mm}0  &  I_{n_1^o}\\
                  0  & \star
  \end{array}}\right)=\left({\begin{array}{*{20}c}
  \vspace{1.5mm}R_{11}^S(0) & 0 \\
  \vspace{1.5mm}0  &  I_{n_1^o}\\
                  0  & 0_{n_1^o}
  \end{array}}\right).
$$
% Now we can use \eqref{tem4.11} and \eqref{tem4.15} to obtain the block-matrix form of $R_M^S(\xi,\eta)$ as  
% $$TL_1\left({\begin{array}{*{20}c}
%   \vspace{1.5mm} I_{n-r-n_1^o} & \\
%                   & L_2 
%   \end{array}}\right)T_1(\dfrac{1}{\eta})\left({\begin{array}{*{20}c}
%   \vspace{1.5mm} L_3(\dfrac{1}{\sqrt{\eta}})T_2(\dfrac{1}{\sqrt{\eta}})\left({\begin{array}{*{20}c}
%   \vspace{1.5mm}R_{11}^S(\dfrac{1}{\sqrt{\eta}}) &  \\
%                     & R_{12}^S(\dfrac{1}{\sqrt{\eta}})
%   \end{array}}\right)
%                          & \\
%                   & R_2^S(\dfrac{1}{\eta}) 
%   \end{array}}\right).
% $$
Thus we let $\eta \rightarrow \infty$ in the expression \eqref{tem4.11} and have
% \begin{align*}
% 	R_M^S\equiv&R_M^S(\xi,\infty) \\[2mm]
% =&LL_0\left({\begin{array}{*{20}c}
%   \vspace{1.5mm} \left({\begin{array}{*{20}c}I_{n-r-n_1^o} & & \\
%     \vspace{1.5mm}  & I_{n_1^o} & \\
%                      & & 0_{n_1^o}
%   \end{array}}\right)
%   \left({\begin{array}{*{20}c}
%   \vspace{1.5mm}R_{11}^S(0) & 0 \\
%   \vspace{1.5mm}0  &  I_{n_1^o}\\
%                   0  & \sqrt{\xi} (K^*K)^{-1}\bar{S}_0^{-1}
%   \end{array}}\right)
%                          & \\
%                   & R_2^S(0) 
%   \end{array}}\right).
% \end{align*}
% Here we have used the definitions of $T_1$ and $T_2$ in Lemma \ref{lemma4.2} and Lemma \ref{lemma4.3} which imply $T_1(0)=I_n$ and $T_2(0)=I_{n-r+n_1^o}$. Moreover, the expression of $L_3(0)$ and $R_{12}^S(0)$ come from the definition \eqref{} and Lemma \ref{lemma34}.

% Now a simple computation shows
\begin{equation*} 
R_M^S \equiv R_M^S(\xi,\omega,\infty) 
= LL_0L_1\left({\begin{array}{*{20}c}
  \vspace{1.5mm}R_{11}^S(0) & 0 & 0\\
  \vspace{1.5mm}0 & I_{n_1^o} &0\\
  \vspace{1.5mm}0 & 0_{n_1^o} &0\\
                  0& 0& R_2^S(0) 
  \end{array}}\right).
\end{equation*}
Recall the expression of $LL_0L_1$ in 
\eqref{tem4.8}. We compute
\begin{align}
R_M^S 
 = \left({\begin{array}{*{20}c}
  \vspace{1.5mm}\big[P_1-P_0[\xi I_{n_1^o} +C_{00}(\omega)]^{-1}C_{01}(\omega)\big]R_{11}^S & P_0 & ~NR_2^S \\[2mm]
  0 & 0 & ~\tilde{K}R_2^S 
  \end{array}}\right)	\label{tem4.16}
\end{align}
with $R_{11}^S=R_{11}^S(0)$ and $R_2^S=R_2^S(0)$. 
Notice that this is a full-rank matrix. 

\subsection{Main results}

Now we take $\eta\rightarrow \infty$ in the Generalized Kreiss Condition to get 
\begin{equation*} 
  |\det\{BR_M^S \}|\geq c_K \sqrt{\det\{R_M^{S*} R_M^S \}}.
\end{equation*}
Because the matrix $R_M^S$ is of full-rank, the above inequality implies that
\begin{equation}\label{tem4.17}
 	BR_M^S=\left( 
   B_u\big[P_1-P_0[\xi I_{n_1^o} +C_{00}(\omega)]^{-1}C_{01}(\omega)\big]R_{11}^S, \ B_uP_0, \ B_uNR_2^S + B_v \tilde{K}R_2^S  
   \right)
\end{equation}
is invertible. 

With these preparations, we establish the following main result.
% Next we will show that the equation \eqref{3.2} with the reduced boundary condition is well-posed. Namely,

\begin{theorem}\label{thm4.5}
Under the assumptions in Section \ref{section2}, there exists a full-rank $n_1^+\times n^+$-matrix $B_o$ such that the relation
\begin{equation}\label{tem4.18}
	B_oB_u\bar{u}(t,0,\hat{x})=B_ob(t,\hat{x}),
\end{equation}
as a boundary condition for the equilibrium system, satisfies $B_oB_uP_0=0$ and the Uniform Kreiss Condition \cite{MO} for characteristic IBVPs:  
\begin{equation*}
  |\det\{B_oB_uP_1R_{11}^S\}| \geq c_K \sqrt{\det\{R_{11}^{S*}R_{11}^S \}}
\end{equation*}
with $c_K$ a positive constant and $R_{11}^S$ a right-stable matrix of $M_1(\xi,\omega)$.
\end{theorem}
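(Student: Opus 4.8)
The plan is to obtain $B_o$ as the constant left-annihilator of the part of $BR_M^S$ that is frozen at $\eta=\infty$. Write the matrix in \eqref{tem4.17} as $BR_M^S=(\,X,\ B_uP_0,\ \Gamma_0\,)$ with $X=B_u\big[P_1-P_0[\xi I_{n_1^o}+C_{00}(\omega)]^{-1}C_{01}(\omega)\big]R_{11}^S$. The key observation is that $B_uP_0$ and $\Gamma_0:=(B_uN+B_v\tilde K)R_2^S$ are \emph{independent} of $(\xi,\omega,\eta)$: the matrices $P_0$, $N$ (see \eqref{tem3.23}) and $\tilde K$ (see \eqref{tem3.20}) are constant, and $R_2^S=R_2^S(0)$ is the right-stable matrix of the \emph{constant} matrix $M_4=(\tilde K^*A\tilde K)^{-1}(\tilde K^*S\tilde K)$, by the proof of Lemma~\ref{lemma4.2}. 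Set $\Gamma:=(B_uP_0,\ \Gamma_0)$, a constant $n^+\times(n^+-n_1^+)$ matrix. Being a column-submatrix of the invertible matrix $BR_M^S$, it has full column rank $n^+-n_1^+$, so its left null space has dimension $n_1^+$; let $B_o$ be any full-rank $n_1^+\times n^+$ matrix whose rows span this null space. Then $B_o$ is constant and $B_o\Gamma=0$, whence $B_oB_uP_0=0$, the first assertion; moreover, since $B_oB_uP_0=0$ the correction in $X$ is killed and $B_o\cdot BR_M^S=(\,B_oB_uP_1R_{11}^S,\ 0,\ 0\,)$.

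To derive the Uniform Kreiss Condition, I would pass to $\eta=\infty$ in the GKC, $|\det\{BR_M^S\}|\ge c_K\sqrt{\det\{R_M^{S*}R_M^S\}}$, and estimate the two sides. For the left side, append $n^+-n_1^+$ rows (collected in a matrix $\Pi$) to $B_o$ to get a constant invertible $n^+\times n^+$ matrix $\mathcal B$. Then $\mathcal B\,BR_M^S$ is block lower-triangular with diagonal blocks $B_oB_uP_1R_{11}^S$ and $\Pi\Gamma$, the latter being invertible because the kernel of $\Pi$ meets the column space of $\Gamma$ (which equals $\ker B_o$) only in $\{0\}$, by invertibility of $\mathcal B$; hence $|\det\{B_oB_uP_1R_{11}^S\}|$ is a positive constant times $|\det\{BR_M^S\}|$. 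For the right side, write $R_M^S=(LL_0L_1)\Phi$, with $\Phi$ the explicit block matrix in the formula for $R_M^S$ preceding \eqref{tem4.16}; a direct computation shows that $\Phi^*\Phi$ is block-diagonal with diagonal blocks $R_{11}^{S*}R_{11}^S$, $I_{n_1^o}$ and $R_2^{S*}R_2^S$, so $\det\{\Phi^*\Phi\}=\det\{R_{11}^{S*}R_{11}^S\}\det\{R_2^{S*}R_2^S\}$ with $\det\{R_2^{S*}R_2^S\}$ a positive constant, and since $LL_0L_1$ is a fixed invertible matrix the same argument used in Section~\ref{section2} (for the invariance of the GKC under symmetrization) gives $\det\{R_M^{S*}R_M^S\}\ge c\,\det\{\Phi^*\Phi\}$ for some $c>0$. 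Combining the three estimates yields $|\det\{B_oB_uP_1R_{11}^S\}|\ge c_K\sqrt{\det\{R_{11}^{S*}R_{11}^S\}}$ for a positive constant again denoted $c_K$, which is the asserted Uniform Kreiss Condition. In particular $B_oB_uP_1R_{11}^S$ is invertible, so $B_oB_u$ has full row rank $n_1^+$ and \eqref{tem4.18} is a genuine boundary condition for the equilibrium system \eqref{3.2}.

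The computations in the second paragraph are routine; the substance is in the first. The crucial points are: once the similarity reductions of Section~\ref{section4} are pushed to $\eta=\infty$, exactly two of the three column-blocks of $BR_M^S$ become $(\xi,\omega)$-independent, so that their common left-annihilator is a \emph{constant} operator $B_o$ with $B_oB_uP_0=0$; and applying $B_o$ collapses $BR_M^S$ onto $B_oB_uP_1R_{11}^S$, i.e.\ onto the equilibrium boundary symbol restricted to the right-stable space of $M_1(\xi,\omega)$ (a right-stable matrix of which is $R_{11}^S$, by Lemma~\ref{lemma4.3}). I expect the main obstacle to be precisely this bookkeeping---verifying which blocks are frozen, which in turn rests on $R_2^S(0)$ being the right-stable matrix of the constant matrix $M_4$, a consequence of the perturbation analysis of Lemmas~\ref{lemma4.1}--\ref{lemma4.3}.
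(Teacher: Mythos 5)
Your first paragraph and the left-hand-side bookkeeping in the second are correct and essentially identical to the paper's argument: $B_o$ is taken as a constant left-annihilator of the parameter-independent blocks $(B_uP_0,\ (B_uN+B_v\tilde K)R_2^S)$, which gives $B_oB_uP_0=0$; completing $B_o$ to an invertible matrix and using the block-triangular structure shows $|\det\{B_oB_uP_1R_{11}^S\}|$ equals a positive constant times $|\det\{BR_M^S\}|$, to which the GKC at $\eta=\infty$ applies.

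The gap is in your lower bound for $\det\{R_M^{S*}R_M^S\}$, precisely the part you dismiss as routine. You write $R_M^S=(LL_0L_1)\Phi$ and claim $\det\{R_M^{S*}R_M^S\}\geq c\,\det\{\Phi^*\Phi\}$ "since $LL_0L_1$ is a fixed invertible matrix," by analogy with the symmetrization argument of Section \ref{section2}. But $LL_0L_1$ is \emph{not} fixed: while $L$ and $L_0$ (see \eqref{tem3.25}, \eqref{4.5}) are constant, the factor $L_1$ in \eqref{4.6} contains the block $\hat C=[\xi I_{n_1^o}+C_{00}(\omega)]^{-1}C_{01}(\omega)$ and hence depends on $(\xi,\omega)$. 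Moreover $\hat C$ is not uniformly bounded on $\{Re\xi>0\}\times\mathbb{R}^{d-1}$ (the skew-Hermitian matrix $C_{00}(\omega)$ has purely imaginary eigenvalues, so $\|[\xi I+C_{00}(\omega)]^{-1}\|$ blows up as $\xi$ approaches them, while $C_{01}(\omega)$ need not vanish); since $\det L_1=1$, the smallest singular value of $LL_0L_1$ then tends to $0$, and the argument of Section \ref{section2} — which needs a \emph{parameter-independent} well-conditioned factor — produces no uniform constant $c$. Uniformity in $(\xi,\omega)$ is exactly what the Uniform Kreiss Condition requires, so your estimate as justified does not close the proof. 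The inequality you want is in fact true, but proving it requires handling $\hat C$ explicitly, which is what the paper does: writing $R_M^S=\left({\begin{array}{cc}X_1 & X_2\\ 0 & X_3\end{array}}\right)$ with $X_1=\big((P_1-P_0\hat C)R_{11}^S,\ P_0\big)$ as in \eqref{tem4.16}, a Schur-complement factorization plus determinant monotonicity (Lemma 3.3 in \cite{Y2}) gives $\det\{R_M^{S*}R_M^S\}\geq\det\{X_1^*X_1\}\det\{X_3^*X_3\}$ with $\det\{X_3^*X_3\}$ a positive constant, and the orthonormality of $(P_1,P_0)$ makes the $\hat C$-blocks cancel through a unipotent congruence, yielding the exact identity $\det\{X_1^*X_1\}=\det\{R_{11}^{S*}R_{11}^S\}$. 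You should replace your "routine" step by this (or an equivalent) computation; with it, the rest of your argument goes through.
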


\begin{remark}
To see why $M_1(\xi,\omega)$ and $P_1$ appear here, we refer to \cite{MO} and reformulate the Uniform Kreiss Condition for characteristic IBVPs of symmetrizable hyperbolic systems.
% To compare the above uniformed Kreiss condition for the equilibrium system \eqref{3.2} with that in \cite{MO}, 
For this purpose, we multiply the equilibrium system \eqref{3.2} with the orthonormal matrix 
$\left({\begin{array}{*{20}c}
                 P_1^*\\ 
                 P_0^*
  \end{array}}\right)$ from the left to obtain
$$
\left({\begin{array}{*{20}c}
                 \vspace{1.5mm}P_1^*\bar{u}\\ 
                 P_0^*\bar{u}
  \end{array}}\right)_t+\left({\begin{array}{*{20}c}
                 \vspace{1.5mm}\Lambda_1 & 0\\ 
                 0 & 0
  \end{array}}\right)\left({\begin{array}{*{20}c}
                 \vspace{1.5mm}P_1^*\bar{u}\\ 
                 P_0^*\bar{u}
  \end{array}}\right)_{x_1}+\sum_{j=2}^d\left({\begin{array}{*{20}c}
                 \vspace{1.5mm}P_1^*A_{j11}P_1 & P_1^*A_{j11}P_0 \\ 
                 P_0^*A_{j11}P_1 & P_0^*A_{j11}P_0
  \end{array}}\right)\left({\begin{array}{*{20}c}
                 \vspace{1.5mm}P_1^*\bar{u}\\ 
                 P_0^*\bar{u}
  \end{array}}\right)_{x_j}=0.
$$
Set $ V_k= P_k^*\bar{u} ~(k=0,1)$. 
% For this system defined for $x_1\geq 0$, a boundary condition should be given at $x_1=0$ and the corresponding boundary matrix $\tilde{B}$ should satisfy an analogue of 
%According to \cite{MO}, this boundary condition should be independent of $P_0^*\bar{u}$ and 
The corresponding eigenvalue problem of the last system is 
$$
\xi\left({\begin{array}{*{20}c}
                 \vspace{1.5mm}\hat{V}_1\\ 
                 \hat{V}_0
  \end{array}}\right) +\left({\begin{array}{*{20}c}
                 \vspace{1.5mm}\Lambda_1 & 0\\ 
                 0 & 0
  \end{array}}\right)\left({\begin{array}{*{20}c}
                 \vspace{1.5mm}\hat{V}_1\\ 
                 \hat{V}_0
  \end{array}}\right)_{x_1}+ \left({\begin{array}{*{20}c}
                 \vspace{1.5mm}C_{11}(\omega) & C_{10}(\omega) \\ 
                 C_{01}(\omega) & C_{00}(\omega)
  \end{array}}\right)\left({\begin{array}{*{20}c}
                 \vspace{1.5mm}\hat{V}_1\\ 
                 \hat{V}_0
  \end{array}}\right)=0, 
$$
where $C_{kl}(\omega) = P_k^*C(\omega)P_l$ and $C(\omega)= i \sum_{j=2}^d\omega_j A_{j11}$ as defined in the beginning of this section.
From this eigenvalue problem, it is easy to see that $\hat{V}_0=-[\xi I+C_{00}(\omega)]^{-1}C_{01}(\omega)\hat{V}_1$ and 
$$
\hat{V}_{1x}=-\Lambda_1^{-1}\bigg(\big[\xi I +C_{11}(\omega)\big]-C_{10}(\omega)\big[\xi I + C_{00}(\omega)\big]^{-1}C_{01}(\omega) \bigg)\hat{V}_{1}=M_1(\xi,\omega)\hat{V}_{1}. 
$$
Therefore, a boundary condition of the form $\tilde{B}\bar{u}(t,0,\hat{x})=\tilde{b}(t,\hat{x})$ for the equilibrium system can be rewritten as
$\tilde{B}P_1V_1(t,0,\hat{x})+\tilde{B}P_0V_0(t,0,\hat{x})=\tilde{b}(t,\hat{x})$ and  
should satisfy the Uniform Kreiss Condition $|\det\{\tilde{B}P_1R_{11}^S\}|\geq c_K\sqrt{\det\{R_{11}^{S*}R_{11}^S \}}$. In addition, it was shown in \cite{MO} that $\tilde{B}P_0=0$ is also necessary for the well-posedness.
Note that $\tilde{B}=B_oB_u$ and $\tilde{b}=B_ob$ for the boundary condition \eqref{tem4.18}.
% Since $B_oB_uP_0=0$, the boundary condition for $\left({\begin{array}{*{20}c}
%                  \vspace{1.5mm}V_1\\ 
%                  V_0
%   \end{array}}\right)$ is $B_ob=B_oB_u(P_1,P_0)\left({\begin{array}{*{20}c}
%                  \vspace{1.5mm}V_1\\ 
%                  V_0
%   \end{array}}\right)=B_oB_uP_1V_1$.
% The uniformed Kreiss condition in \cite{MO} is $\det\{B_oB_uP_1R_{11}^S\}\geq c_K$.
\end{remark}

\begin{remark}
Surprisingly, the GKC implies that the reduced boundary condition \eqref{tem4.18} fulfills the necessary requirement $B_oB_uP_0=0$ proposed in \cite{MO}. 
\end{remark}

\textbf{Proof of Theorem 4.5:}
For convenience, we rename the block matrices of $BR_M^S$ in \eqref{tem4.17} as
\begin{align}
BR_M^S=&(Y_1,Y_2,Y_3)\nonumber\\
\equiv&\left( 
   B_u\big[P_1-P_0[\xi I_{n_1^o} +C_{00}(\omega)]^{-1}C_{01}(\omega)\big]R_{11}^S , \ B_uP_0, \ B_uNR_2^S + B_v \tilde{K}R_2^S  
   \right),  \label{tem4.19}
\end{align}
Since $BR_M^S$ is invertible, $(Y_2,Y_3)$ is an $n^+\times(n^+-n_1^+)$ full-rank matrix. Thus there exists a full-rank $n_1^+\times n^+$-matrix $B_o$ such that 
\begin{equation} \label{tem4.20}
    B_o(Y_2, Y_3)=0,
\end{equation}
which implies that $B_oB_uP_0=B_oY_2=0$. Recall that $R_2^S$ is a right-stable matrix of $P_2(0)=(\tilde{K}^*A\tilde{K})^{-1}\tilde{K}^*S\tilde{K}$, which is independent of parameters $\xi$ and $\omega$. Thus, $R_2^S$ and thereby $B_o$ are independent of the parameters.

With $B_o$ given thus, let $\tilde{B}_o$ be an $(n^+-n_1^+)\times n^+$-matrix such that 
$\left({\begin{array}{*{20}c}
                  B_o\\ 
                  \tilde{B}_o
  \end{array}}\right)$ is invertible. By \eqref{tem4.20}, we have
\begin{align*}
\left({\begin{array}{*{20}c}
    \vspace{1.5mm}B_o\\ 
      \tilde{B}_o
\end{array}}\right)BR_M^S 
=\left({\begin{array}{*{20}c}
    \vspace{1.5mm}B_oY_1 & 0 & 0\\[2mm] 
      \tilde{B}_oY_1 & \tilde{B}_oY_2 & \tilde{B}_oY_3
\end{array}}\right).
\end{align*}
Then the $(n^+-n_1^+)\times(n^+-n_1^+)$ matrix $(\tilde{B}_oY_2, ~\tilde{B}_oY_3)$ is invertible and 
\begin{align*} 
\det\{B_oY_1\}=\det\{(\tilde{B}_oY_2, ~\tilde{B}_oY_3)\}^{-1}\det\left\{\left({\begin{array}{*{20}c}
    \vspace{1.5mm}B_o\\ 
      \tilde{B}_o
\end{array}}\right)\right\}\det\{BR_M^S\}.
\end{align*}
Since $B_oB_uP_0=0$, it follows that $B_oY_1 = B_oB_uP_1R_{11}^S$.
Thus we deduce from the GKC that 
\begin{align}
	|\det\{B_oB_uP_1R_{11}^{S}\}|=&\det\{(\tilde{B}_oY_2, ~\tilde{B}_oY_3)\}^{-1}\det\left\{\left({\begin{array}{*{20}c}
    \vspace{1.5mm}B_o\\ 
      \tilde{B}_o
\end{array}}\right)\right\}\det\{BR_M^S\} \nonumber \\
 \geq& c_K\det\{(\tilde{B}_oY_2, ~\tilde{B}_oY_3)\}^{-1}\det\left\{\left({\begin{array}{*{20}c}
    \vspace{1.5mm}B_o\\ 
      \tilde{B}_o
\end{array}}\right)\right\}\sqrt{\det\{R_{M}^{S*} R_M^S \}}\label{tem4.21}\\
\equiv& \tilde{c}_1 \sqrt{\det\{R_{M}^{S*} R_M^S \}}.\nonumber
\end{align}

Now we rename the block matrices of $R_M^S$ in \eqref{tem4.16} as 
$$
R_M^S=\left({\begin{array}{*{20}c}
  \vspace{1.5mm}\bigg(\big[P_1-P_0[\xi I_{n_1^o} +C_{00}(\omega)]^{-1}C_{01}(\omega)\big]R_{11}^S, ~P_0\bigg) & ~NR_2^S \\[2mm]
  0  & ~\tilde{K}R_2^S 
  \end{array}}\right)\equiv \left({\begin{array}{*{20}c}
   \vspace{1.5mm} X_1 & X_2\\ 
                       0 & X_3
  \end{array}}\right).
$$
A simple computation shows 
\begin{align*}
R_M^{S*}R_M^S
=&\left({\begin{array}{*{20}c}
    \vspace{1.5mm}X_1^*X_1 &X_1^*X_2\\ 
      X_2^*X_1 & X_2^*X_2+X_3^*X_3
\end{array}}\right)\\[2mm]
=& \left({\begin{array}{*{20}c}
    \vspace{1.5mm}I  & 0\\ 
      X_2^*X_1(X_1^*X_1)^{-1} & I 
\end{array}}\right)\left({\begin{array}{*{20}c}
    \vspace{1.5mm}X_1^*X_1 & 0\\ 
      0 & \tilde{X}
\end{array}}\right)\left({\begin{array}{*{20}c}
    \vspace{1.5mm}I & (X_1^*X_1)^{-1}X_1^*X_2\\ 
      0 & I 
\end{array}}\right)
\end{align*}
with $\tilde{X}=X_2^*[I -X_1(X_1^*X_1)^{-1}X_1^*]X_2+X_3^*X_3$. 
From this it follows that 
\begin{equation}\label{tem4.22}
\det\{R_M^{S*}R_M^S \}=\det\{X_1^*X_1\} \det\{\tilde{X}\}.
\end{equation}
Since $I \geq X_1(X_1^*X_1)^{-1}X_1^*$ and $X_3^*X_3\geq 0$, we have
 $\det\{\tilde{X}\}\geq \det\{X_3^*X_3\}$ by Lemma 3.3 in \cite{Y2}. 
 Thus we deduce from \eqref{tem4.21} and \eqref{tem4.22} that
\begin{align}\label{tem4.23}
|\det\{B_oB_uP_1R_{11}^S\}|\geq&~\tilde{c}_1 \sqrt{\det\{\tilde{X}\}}\sqrt{\det\{X_1^*X_1\}}\geq 
\tilde{c}_1 \sqrt{\det\{X_3^{*}X_3\}}\sqrt{\det\{X_1^*X_1\}}.
\end{align}

Note that $X_3$ is of full-column-rank and independent of the parameters. Thus $\sqrt{\det\{X_3^{*}X_3\}}$ is a positive number independent of the parameters.
Moreover, let $\hat{C}=[\xi I_{n_1^o} +C_{00}(\omega)]^{-1}C_{01}(\omega)$. 
Then we have $X_1=\Big((P_1-P_0\hat{C})R_{11}^S, ~P_0\Big)$ and thereby
% $$
% \left({\begin{array}{*{20}c}
%     \vspace{1.5mm}R_{11}^{S*}(P_1^*-\hat{C}^*P_0^*)\\ 
%       P_0^*
% \end{array}}\right)\Big((P_1-P_0\hat{C})R_{11}^S, ~P_0\Big)=
% \left({\begin{array}{*{20}c}
%     \vspace{1.5mm}R_{11}^{S*}[I+\hat{C}^*\hat{C}]R_{11}^S & -R_{11}^{S*}\hat{C}^*\\ 
%       -\hat{C}R_{11}^S & I 
% \end{array}}\right)
% $$
$$
X_1^*X_1=
\left({\begin{array}{*{20}c}
    \vspace{1.5mm}R_{11}^{S*}[I+\hat{C}^*\hat{C}]R_{11}^S & -R_{11}^{S*}\hat{C}^*\\ 
      -\hat{C}R_{11}^S & I 
\end{array}}\right)= 
\left({\begin{array}{*{20}c}
    \vspace{1.5mm}I & -R_{11}^{S*}\hat{C}^*\\ 
      0 & I 
\end{array}}\right)
\left({\begin{array}{*{20}c}
    \vspace{1.5mm}R_{11}^{S*} R_{11}^S & 0\\ 
      0 & I 
\end{array}}\right)
\left({\begin{array}{*{20}c}
    \vspace{1.5mm}I & 0\\ 
      -\hat{C}R_{11}^S & I 
\end{array}}\right)
$$
Therefore we have
$$
\sqrt{\det\{X_1^*X_1\}}= \sqrt{\det\{R_{11}^{S*}R_{11}^S\}}.
$$
This together with the inequality \eqref{tem4.23} completes the proof.

Furthermore, we prove 
\begin{lemma}\label{lemma4.6}
Under the GKC, the square matrix defined in \eqref{tem3.31}
is invertible. 
\end{lemma}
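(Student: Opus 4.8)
The plan is to deduce the invertibility of the $(n_1^++r)\times(n_1^++r)$ square matrix in \eqref{tem3.31} — call it $\mathcal{B}$ — from the invertibility of $BR_M^S$ obtained just before Theorem \ref{thm4.5} as a consequence of the GKC with $\eta\to\infty$. The bridge is to specialize that result to the tangential frequency $\omega=0$ and then exploit the block decomposition \eqref{tem4.16} together with the stable/unstable splitting of the matrix $P_2(0)=(\tilde K^*A\tilde K)^{-1}(\tilde K^*S\tilde K)$.

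First I would set $\omega=0$ in \eqref{tem4.16}. Then $C_{00}(0)=C_{01}(0)=C_{10}(0)=C_{11}(0)=0$, so Lemma \ref{lemma4.1} gives $M_1(\xi,0)=-\xi\Lambda_1^{-1}$, whose stable invariant subspace is the positive-eigenvalue eigenspace of $\Lambda_1$ and is independent of $\xi$ for $\text{Re}\,\xi>0$. Since $A_{11}=P_1\Lambda_1P_1^*$ by \eqref{tem3.11}, the columns of $P_1R_{11}^S$ span exactly the positive-eigenvalue eigenspace of $A_{11}$; as this is also spanned by the columns of the full-rank matrix $P_1^U$, there is an invertible $n_1^+\times n_1^+$ matrix $E$ with $P_1R_{11}^S=P_1^UE$. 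Hence \eqref{tem4.16} yields
\begin{equation*}
BR_M^S\big|_{\omega=0}=\big(\,B_uP_1^UE,\ B_uP_0,\ (B_uN+B_v\tilde{K})R_2^S\,\big),
\end{equation*}
which is an invertible $n^+\times n^+$ matrix by the displayed consequence of the GKC preceding Theorem \ref{thm4.5}. Right-multiplying by $\text{diag}(E^{-1},I,I)$, we obtain that $\big(B_uP_1^U,\ B_uP_0,\ (B_uN+B_v\tilde{K})R_2^S\big)$ is invertible.

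Next let $R_2^U$ be a right-unstable matrix of $P_2(0)$, so that $(R_2^S,R_2^U)$ is an invertible $(r-n_1^o)\times(r-n_1^o)$ matrix. Because $L_2^U$ is the left-unstable matrix of the same $P_2(0)$, we have $L_2^UR_2^S=0$, while $L_2^U(R_2^S,R_2^U)=(0,\ L_2^UR_2^U)$ has rank $\text{rank}\,L_2^U=r+n_1^+-n^+$ (using Proposition \ref{prop3.3}); hence the square matrix $L_2^UR_2^U$ is invertible. Now right-multiply $\mathcal{B}$ by $\text{diag}(I,I,(R_2^S,R_2^U))$ and split the third column block accordingly:
\begin{equation*}
\mathcal{B}\,\text{diag}\big(I,I,(R_2^S,R_2^U)\big)=\left({\begin{array}{*{20}c}
  \vspace{1.5mm}B_uP_1^U & B_uP_0 & (B_uN+B_v\tilde{K})R_2^S & (B_uN+B_v\tilde{K})R_2^U \\
  0 & 0 & 0 & L_2^UR_2^U
  \end{array}}\right).
\end{equation*}
This matrix is block upper triangular with square diagonal blocks $\big(B_uP_1^U,\ B_uP_0,\ (B_uN+B_v\tilde{K})R_2^S\big)$ (of order $n^+$) and $L_2^UR_2^U$ (of order $r+n_1^+-n^+$), both invertible by the two preceding steps; hence it is invertible, and since $\text{diag}(I,I,(R_2^S,R_2^U))$ is invertible, so is $\mathcal{B}$.

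The step I expect to be the main obstacle is the identification $\text{range}(P_1R_{11}^S)=\text{range}(P_1^U)$ at $\omega=0$ — that is, checking that the $\xi$- and $\omega$-dependent right-stable matrix $R_{11}^S$ of $M_1(\xi,\omega)$ reduces, when $\omega=0$, to a fixed basis of the positive eigenspace of $A_{11}$ without affecting the GKC conclusion — together with the careful bookkeeping of the dimensions $n^+$, $n_1^+$, $n_1^o$, $r$ that makes the two triangular diagonal blocks genuinely square. Everything else is elementary linear algebra (the $P_2(0)$ splitting and two block-triangular reductions).
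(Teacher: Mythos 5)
Your proposal is correct and follows essentially the same route as the paper: the paper also specializes to $\omega=0$ (with $\xi=1$), notes that $R_{11}^S$ is then the right-stable matrix of $-\Lambda_1^{-1}$ so that one may simply take $P_1^U=P_1R_{11}^S$ (your invertible factor $E$ is absorbed into this choice), and then right-multiplies the square matrix in \eqref{tem3.31} by $\mathrm{diag}(I_{n_1^+},I_{n_1^o},(R_2^S,R_2^U))$ to obtain the same block-triangular matrix with diagonal blocks $BR_M^S(1,0,\infty)$ and $L_2^UR_2^U$. The dimension bookkeeping and the use of $L_2^UR_2^S=0$, $L_2^UR_2^U$ invertible match the paper's argument exactly.
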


\begin{proof}
Taking $\xi=1$ and $\omega=0$ in the expression of \eqref{tem4.19}, we have
$$
BR_M^S(1,0,\infty)=\left( 
   B_uP_1R_{11}^S , \ B_uP_0, \ B_uNR_2^S + B_v \tilde{K}R_2^S  
   \right)
$$
and $R_{11}^S$ is the right-stable matrix of $M_1(1,0)=-\Lambda_1^{-1}$. Thus we can take $P_1^U= P_1R_{11}^S$ in the decomposition \eqref{tem3.28}.

Let $R_2^U$ and $R_2^S$ be the right-unstable and right-stable matrices of the invertible matrix $(\tilde{K}^*A\tilde{K})^{-1}\tilde{K}^*S\tilde{K}$. Then $(R_2^S, R_2^U)$ and $L_2^UR_2^U$ are invertible, $L_2^UR_2^S=0$, and $BR_M^S(1,0,\infty)$ is invertible due to the GKC. Hence the lemma follows from the following simple computation
\begin{align*}
&\left({\begin{array}{*{20}c}
  \vspace{1.5mm}B_uP_1^U & B_uP_0 & B_uN+B_v\tilde{K} \\ 
  0 & 0 & L_2^U
  \end{array}}\right)\left({\begin{array}{*{20}c}
  \vspace{1.5mm}I_{n_1^+} & 0 & 0 & 0\\ 
  \vspace{1.5mm}0 & I_{n_1^o} & 0 & 0\\ 
  0 & 0 & R_2^S & R_2^U
  \end{array}}\right) \\[3mm]
=&\left({\begin{array}{*{20}c}
  \vspace{1.5mm}B_uP_1^U & B_uP_0 & (B_uN + B_v \tilde{K})R_2^S & (B_uN + B_v \tilde{K})R_2^U \\[1mm]
  0 & 0 & 0 & L_2^UR_2^U
  \end{array}}\right)\\[3mm]
=&\left({\begin{array}{*{20}c}
  \vspace{1.5mm}BR_M^S(1,0,\infty)  & (B_uN + B_v \tilde{K})R_2^U \\
  0 & L_2^UR_2^U
  \end{array}}\right).
\end{align*}
\end{proof}

Finally, we obtain the boundary conditions for \eqref{3.2}, \eqref{tem3.14} and the initial condition for \eqref{tem3.8} from \eqref{tem3.27}.
% by solving \eqref{tem3.31}.
%The boundary condition \eqref{tem4.18} is derived from \eqref{tem3.27}. 
First of all, we see from \eqref{tem3.30} that $w(t,0,\hat{x})$ can be expressed as $w(t,0,\hat{x})=R_2^Sw^S$ with $w^S$ an $(n^+-n_1^+-n_1^o)$-vector.
%$w(t,0,\hat{x})\in\text{span}\{R_2^S\}$. 
Thus \eqref{tem3.27} can be rewritten as 
\begin{equation}\label{tem4.24}
 B_u\bar{u}(t,0,\hat{x}) + B_uP_0(P_0^*\mu_1)(t,0,\hat{x}) + (B_uN+B_v\tilde{K})R_2^Sw^S = b(t,\hat{x}).
\end{equation}
Multiplying this equation with $B_o$ from the left and using \eqref{tem4.20}, we arrive at the boundary condition \eqref{tem4.18}. Consequently, $\bar{u}$ can be uniquely determined by solving the equilibrium system \eqref{3.2} together with \eqref{tem4.18} and proper initial data.

% \begin{equation}\label{329} 
%    B_u\bar{u}(0,t)+ B_uP_0(P_0^*\mu_1)(0,t) + (B_uN+B_v\tilde{K})w(0)=b(t).
% \end{equation}
% Since $L_2^Uw(0)=0$, it follows that $w(0)\in\text{span}\{R_2^S\}$.
% Thus we can multiply $B_o$ on the left of \eqref{329} and use \eqref{tem4.20} to obtain 
% \begin{equation*}
% 	B_oB_u\bar{u}(0,t)=B_ob(t). 
% \end{equation*}
Once $\bar{u}$ is solved, we turn to derive the initial condition for \eqref{tem3.8} and the boundary condition for \eqref{tem3.14}.
%According to the proof of this theorem, we also have
% \begin{prop}\label{prop31}
% Under the assumptions in Section \ref{section2}, there exist an $n_1^o\times n^+$ matrix $B_{1}$ and an $(n^+-n_1^+-n_1^o)\times n^+$ matrix $B_{2}$ determined by the relation \eqref{321} below such that 
% \begin{equation}\label{317}
%   \det\{B_{1}B_uP_0\}\neq 0,\qquad \det\{B_{2}(B_uN + B_v \tilde{K})R_2^S\}\neq 0.	
% \end{equation}
% The boundary condition for \eqref{tem3.14} and the initial condition for \eqref{tem3.18} can be expressed as 
% \begin{align}
% 	 (P_0^*\mu_1)(0,t)=&~(B_{1} B_uP_0)^{-1}B_{1}[b(t)-B_u\bar{u}(0,t)], \label{BCP0mu1}\\[2mm]
% 	 w(0)=&~R_2^S\Big(B_{2}(B_uN+B_v\tilde{K})R_2^S\Big)^{-1}B_{2}[b(t)-B_u\bar{u}(0,t)].\label{ICw} 
% \end{align} 
% \end{prop}
Taking $\xi = 1$ and $\omega=0$ in \eqref{tem4.19}, we see that 
$$
(B_uP_1R_{11}^S, ~B_uNR_2^S + B_v \tilde{K}R_2^S)  \quad\text{and}\quad  (B_uP_1R_{11}^S , \ B_uP_0)
$$ 
are the respective $n^+\times(n^+-n_1^o)$ and $n^+\times (n_1^++ n_1^o)$ full-rank matrices. Then there exist an $n_1^o\times n^+$-matrix $B_1$ and an $(n^+-n_1^+-n_1^o)\times n^+$-matrix $B_{2}$ such that 
\begin{equation*} 
  B_1(B_uP_1R_{11}^S, ~B_uNR_2^S + B_v \tilde{K}R_2^S)=0, \qquad B_2(B_uP_1R_{11}^S , \ B_uP_0)=0.
\end{equation*}
Multiplying \eqref{tem4.24} with $B_1$ and $B_2$ from the left yields
\begin{equation}\label{tem4.25}
(B_1B_uP_0)(P_0^*\mu_1)(t,0,\hat{x})=B_1b(t,\hat{x}) - B_1B_u\bar{u}(t,0,\hat{x})	
\end{equation}
and 
\begin{equation}\label{tem4.26}
B_2(B_uN+B_v\tilde{K})R_2^Sw^S= B_2b(t,\hat{x}) - B_2B_u\bar{u}(t,0,\hat{x}).	
\end{equation}
Like $B_oY_1$ in the proof of Theorem \ref{thm4.5}, $B_1B_uP_0\equiv B_1Y_2$ and $B_2(B_uN+B_v\tilde{K})R_2^S\equiv B_2Y_3$ can be shown to be invertible. Thus $P_0^*\mu_1(t,0,\hat{x})$ and $w^S$ can be solved from \eqref{tem4.25} and \eqref{tem4.26}. At last, $w(t,0,\hat{x})$ is obtained by the simple relation $w(t,0,\hat{x})=R_2^Sw^S$ above. Consequently, we get the initial condition for \eqref{tem3.8} due to Proposition \ref{prop3.3} and the boundary condition for \eqref{tem3.14}.
 
% On the other hand, since $w(0)\in\text{span}\{R_2^S\}$, we can express $w(0)=R_2^Sw^S$ with $w^S$ representing an $(n^+-n_1^+-n_1^o)$-vector. Multiplying $B_2$ on the left of yields 

% Due to the fact that $B_2(B_uN+B_v\tilde{K})R_2^S$ is invertible, we can get $w^S$ by the last equation. Consequently, by the relation $w(0)=R_2^Sw^S$, we have the result of \eqref{ICw}.

% \begin{align}
% 	 (P_0^*\mu_1)(0,t)=&~(B_{1} B_uP_0)^{-1}B_{1}[b(t)-B_u\bar{u}(0,t)], \label{BCP0mu1}\\[2mm]
% 	 w(0)=&~R_2^S\Big(B_{2}(B_uN+B_v\tilde{K})R_2^S\Big)^{-1}B_{2}[b(t)-B_u\bar{u}(0,t)].\label{ICw} 
% \end{align} 
\section{Validity}\label{section5} 
In this section, we show that the relaxation limit of smooth solution $U^\epsilon$ to the IBVP \eqref{2.1} satisfies the boundary condition \eqref{tem4.18} as well as the 
equilibrium system \eqref{3.2}. This will be done by estimating $(U^\epsilon - U_\epsilon)$ with the decomposition method \cite{GKO} mainly based on the GKC. Here $U_\epsilon$ is the approximate solution constructed in Section \ref{section3}.

% Let $U^\epsilon$ and $U_\epsilon$ denote the exact solution and the asymptotic solution to the initial-boundary value problem \eqref{2.1}. In this section, we will give an error estimate between the asymptotic solution $U_\epsilon$ and the exact solution $U^\epsilon$. 
%\subsection{Assumptions}
For simplicity, we make some assumptions on the initial and boundary data. In order to avoid the initial-layer effect, the initial value $U_0(x_1,\hat{x})$ for the relaxation system \eqref{2.1} is assumed to be in equilibrium:
\begin{equation}\label{5.1}
	U_0(x_1,\hat{x})=\left({\begin{array}{*{20}c}
  \vspace{1.5mm}u_0(x_1,\hat{x}) \\
                0
\end{array}}\right),
\end{equation}
where $u_0(x_1,\hat{x})$ represents the first $(n-r)$ components of $U_0(x_1,\hat{x})$.  
Moreover, we assume that the initial value $U_0(x_1,\hat{x})$ and the boundary condition in \eqref{2.1} are compatible:
\begin{equation}\label{5.2}
  BU_0(0,\hat{x})=b(0,\hat{x}),\qquad \text{for}~\hat{x}\in \mathbb{R}^{d-1}.
\end{equation} 
These imply
\begin{equation}\label{5.3}
  B_oB_uu_0(0,\hat{x})=B_ob(0,\hat{x}),
\end{equation}
meaning that the reduced boundary condition \eqref{tem4.18} for the equilibrium system \eqref{3.2}  is compatible with the initial value $u_0$. 
% Furthermore, it follows from \eqref{5.2} and \eqref{BCP0mu1} that 
% $$
%  (P_0^*\mu_1)(0,0)=0.
% $$
% Furthermore, we assume that the initial data $u_0(x_1,\hat{x})$ and the boundary term $b(t,\hat{x})$ are smooth enough. 
% Namely, 
% \begin{equation}\label{5.4}
% b(t,\hat{x}) \in H^2([0,T]\times R^{d-1}),\qquad u_0(x_1,\hat{x})\in H^2(R^+\times R^{d-1}).	
% \end{equation}

\subsection{Approximate solution}\label{subsection5.1}
As a preparation for the estimate in the next subsection, we complete the construction of the approximate solution in \eqref{3.1}:
$$
U_\epsilon(t,x_1,\hat{x})=\left({\begin{array}{*{20}c}
  \vspace{1.5mm}\bar{u}\\
  \bar{v}
\end{array}}\right)(t,x_1,\hat{x})+\left({\begin{array}{*{20}c}
  \vspace{1.5mm}\mu_0 \\
  \nu_0
\end{array}}\right)(t,\frac{x_1}{\epsilon},\hat{x})+\left({\begin{array}{*{20}c}
  \vspace{1.5mm}\mu_1 \\
  \nu_1
\end{array}}\right)(t,\frac{x_1}{\sqrt{\epsilon}},\hat{x})+\sqrt{\epsilon}\left({\begin{array}{*{20}c}
  \vspace{1.5mm}\mu_2 \\
  \nu_2
\end{array}}\right)(t,\frac{x_1}{\sqrt{\epsilon}},\hat{x}).
$$
Recall from Subsection \ref{subsection3.1} that 
\begin{equation}\label{5.4}
	\bar{v}=0,\quad \nu_1=0,\quad P_1^*\mu_1=0 
\end{equation}
and the coefficients $\bar{u}$, $(\mu_0, \nu_0)$ and $P_0^*\mu_1$ satisfy the equations \eqref{3.2}, \eqref{tem3.8} and \eqref{tem3.14} respectively. Furthermore, the boundary conditions for \eqref{3.2},\eqref{tem3.14} and the initial condition for \eqref{tem3.8} are given in \eqref{tem4.18}, \eqref{tem4.25} and \eqref{tem4.26}, respectively. 
It remains to specify initial conditions for the partial differential equations \eqref{3.2} and \eqref{tem3.14}. 
%We derive them from the initial data of the approximate solution $U_\epsilon$ as follows.
By \eqref{5.4}, the initial value $U_\epsilon(0,x_1,\hat{x})$ is
$$
\left({\begin{array}{*{20}c}
  \vspace{1.5mm}\bar{u}\\
  0
\end{array}}\right)(0,x_1,\hat{x})+\left({\begin{array}{*{20}c}
  \vspace{1.5mm}\mu_0 \\
  \nu_0
\end{array}}\right)(0,\frac{x_1}{\epsilon},\hat{x})+\left({\begin{array}{*{20}c}
  \vspace{1.5mm}P_0(P_0^*\mu_1) \\
  0
\end{array}}\right)(0,\frac{x_1}{\sqrt{\epsilon}},\hat{x})+\sqrt{\epsilon}\left({\begin{array}{*{20}c}
  \vspace{1.5mm}\mu_2 \\
  \nu_2
\end{array}}\right)(0,\frac{x_1}{\sqrt{\epsilon}},\hat{x}).
$$

For $U_\epsilon$ to be a good approximation of the exact solution $U^\epsilon$, its initial value should be close to  
$$
U^\epsilon(0,x_1,\hat{x})=U_0(x_1,\hat{x})=\left({\begin{array}{*{20}c}
  \vspace{1.5mm}u_0(x_1,\hat{x}) \\
  0
\end{array}}\right). 
$$  
Thus we take
$$
\bar{u}(0,x_1,\hat{x})=u_0(x_1,\hat{x}),\qquad P_0^*\mu_1(0,\frac{x_1}{\sqrt{\epsilon}},\hat{x}) \equiv 0.
$$
With these, it follows from \eqref{tem4.26} and \eqref{5.2} that $w(0,0,\hat{x})=0$. By Proposition \ref{prop3.3}, we know that $(\mu_0,\nu_0)|_{t=0}\equiv 0$.
% According to the discussion in Section \ref{section3}-\ref{section4}, we know that $\mu_0(0,t)$, $\nu_0(0,t)$, $P_0^*\mu_1(0,t)$ are uniquely determined by the boundary condition \eqref{tem3.26} under the Generalized Kreiss condition. Thus it follows from the compatibility \eqref{5.2} that $(\mu_0,\nu_0)(0,0)=0$. Recall from Proposition \ref{prop3.3}, we know that $(\mu_0(y,0),\nu_0(y,0))\equiv 0$.
Moreover, by \eqref{tem3.10} and \eqref{tem3.15}, we know that $\nu_2(0,z,\hat{x})\equiv0$ and $P_1^*\mu_2(0,z,\hat{x})\equiv0$.
Note that $\mu_2$ is not uniquely determined by the equation \eqref{tem3.15}. Here, for simplicity, we may take $\mu_2(0,z,\hat{x})\equiv 0$. In conclusion, the initial value for $U_\epsilon$ can be taken as
$$
U_\epsilon(0,x_1,\hat{x})=U_0(x_1,\hat{x})=U^\epsilon(0,x_1,\hat{x}).
$$

Combining the initial condition $\bar{u}(0,x_1,\hat{x})=u_0(x_1,\hat{x})$ with the equilibrium system \eqref{3.2} and the reduced boundary condition \eqref{tem4.18}, we have the following IBVP for $\bar{u}$:
\begin{eqnarray*} 
\left\{{\begin{array}{*{20}l}
  \bar{u}_{t}+A_{11}\bar{u}_{x_1}+\sum_{j=2}^dA_{j11}\bar{u}_{x_j}=0,\\[2mm]
  B_oB_u\bar{u}(t,0,\hat{x})=B_ob(t,\hat{x}),\\[2mm]
  \bar{u}(0,x_1,\hat{x})=u_0(x_1,\hat{x}).
\end{array}}\right.
\end{eqnarray*} 
% \textcolor{red}{We assume that there exists an unique solution $\bar{u}\in H^1([0,T]\times R^+ \times R^{d-1})$ and $\bar{u}|_{x_1=0} \in H^1([0,T]\times R^{d-1})$. } 
Having $\bar{u}$, we can get the initial value $w(t,0,\hat{x})=R_2^Sw^S$ from \eqref{tem4.26}.
Then we refer to Proposition \ref{prop3.3} and determine $(\mu_0, \nu_0)$. 
On the other hand, we know from \eqref{tem3.14}, \eqref{tem4.25} and the initial condition $P_0^*\mu_1(0,z,\hat{x})=0$ that $P_0^*\mu_1$ satisfies 
\begin{eqnarray*} 
\left\{{\begin{array}{*{20}l}
 \partial_t(P_0^*\mu_1)+\left[P_0^*A_{12}S^{-1}A_{12}^*P_0\right]\partial_{zz}(P_0^*\mu_1)+\sum\limits_{j=2}^d (P_0^*A_{j11}P_0) \partial_{x_j} (P_0^*\mu_1)=P_0^*G,\\[4mm]
 (B_1B_uP_0)(P_0^*\mu_1)(t,0,\hat{x})=B_1b(t,\hat{x}) - B_1B_u\bar{u}(t,0,\hat{x}),\\[3mm]
  (P_0^*\mu_1)(0,z,\hat{x})=0.
\end{array}}\right.
\end{eqnarray*}
% \textcolor{red}{We assume that there exists an unique solution $P_0^*\mu_1\in L^2([0,T];H^2(R^+\times R^{d-1}))$ and $\partial_t(P_0^*\mu_1)\in L^2([0,T]\times H^1(R^+\times R^{d-1}))$.} 
At last, we determine $\mu_2$ and $\nu_2$ according to \eqref{tem3.15} and \eqref{tem3.10}.
In this way, the approximate solution $U_\epsilon$ is constructed. 
% such that $(\mu_2, \nu_2)\in L^2([0,T];H^1(R^+\times R^{d-1}))$ and $\partial_t(\mu_2, \nu_2)\in L^2([0,T]\times R^+\times R^{d-1})$.

% , we know that $w(t,0,\hat{x})\in H^1([0,T]\times R^{d-1})$. Then due to the classic theory of ordinary differential equation, the solution $(\mu_0,\nu_0)$ to the problem \eqref{tem3.8} belongs to $H^1([0,T]\times R^+ \times R^{d-1})$.  

For the error estimate in the next subsection, we make the following regularity assumption 
\begin{assumption}\label{asp5.1}
The expansion coefficients in \eqref{3.1} satisfy
\begin{eqnarray*} 
\left\{{\begin{array}{*{20}l}
\bar{u}, ~P_0^*\mu_1\in L^2([0,T]\times H^1(\mathbb{R}^+\times \mathbb{R}^{d-1})),\\[3mm]
% \quad \bar{u}|_{x_1=0} \in H^1([0,T]\times R^{d-1}),
(\mu_0,\nu_0), ~(\mu_2, \nu_2)\in H^1([0,T]\times \mathbb{R}^+ \times \mathbb{R}^{d-1}).
% P_0^*\mu_1\in L^2([0,T];H^1(R^+\times R^{d-1})),\\[3mm]
% \quad \partial_t(P_0^*\mu_1)\in L^2([0,T]\times H^1(R^+)),
%(\mu_2, \nu_2)\in H^1([0,T]\times R^+ \times R^{d-1} ).
\end{array}}\right. 
\end{eqnarray*}
\end{assumption}
\subsection{Error estimate}\label{subsection5.2}
To estimate the difference 
$$
W:=U^\epsilon-U_\epsilon=\left({\begin{array}{*{20}c}
  \vspace{1.5mm}u^\epsilon \\
  v^\epsilon
\end{array}}\right)-\left({\begin{array}{*{20}c}
  \vspace{1.5mm}u_\epsilon \\
  v_\epsilon
\end{array}}\right),
$$
we firstly derive the equation of $W$. Recall the linear operator $\mathcal{L}$: 
$$
\mathcal{L}(U)=\partial_{t}U +\sum\limits_{j=1}^d A_j\partial_{x_j}U-QU/\epsilon 
$$
introduced in Section \ref{subsection3.1}.
Clearly, we have $\mathcal{L}(U^\epsilon)=0$ and
$$
\mathcal{L}(U_\epsilon)=\mathcal{L}_1+\mathcal{L}_2.
$$
Here 
\begin{align*}
\mathcal{L}_1=&\partial_{t}\left({\begin{array}{*{20}c}
  \vspace{1.5mm}\bar{u}\\
  \bar{v}
  \end{array}}\right) +
\sum\limits_{j=1}^d A_j
\partial_{x_j}\left({\begin{array}{*{20}c}
  \vspace{1.5mm}\bar{u} \\
  \bar{v}
  \end{array}}\right) 
  -\frac{1}{\epsilon}
\left({\begin{array}{*{20}c}
  \vspace{1.5mm}0 & 0 \\
  0 & S
  \end{array}}\right)
\left({\begin{array}{*{20}c}
  \vspace{1.5mm}\bar{u} \\
  \bar{v}
  \end{array}}\right),\\[2mm]
\mathcal{L}_2=&\partial_{t}\left({\begin{array}{*{20}c}
  \vspace{1.5mm}\mu\\
  \nu
  \end{array}}\right) +
\sum\limits_{j=1}^d A_j
\partial_{x_j}\left({\begin{array}{*{20}c}
  \vspace{1.5mm}\mu\\
  \nu
  \end{array}}\right)-\frac{1}{\epsilon}
\left({\begin{array}{*{20}c}
  \vspace{1.5mm}0 & 0 \\
  0 & S
  \end{array}}\right)
\left({\begin{array}{*{20}c}
  \vspace{1.5mm}\mu\\
  \nu
  \end{array}}\right).
\end{align*}
By \eqref{3.2} and \eqref{3.3}, we have
\begin{align*} 
\mathcal{L}_1=\left({\begin{array}{*{20}c}
  \vspace{2mm}0 \\
  A_{12}^*\partial_{x_1}\bar{u}+\sum\limits_{j=2}^d A_{j12}^*\partial_{x_j}\bar{u}
  \end{array}}\right).
\end{align*}
Using \eqref{tem3.8}-\eqref{tem3.10}, we obtain
\begin{align*}
\mathcal{L}_2=&~\partial_t\left({\begin{array}{*{20}c}
  \vspace{1.5mm}\mu \\
  \nu 
\end{array}}\right) +\sum\limits_{j=2}^d \left({\begin{array}{*{20}c}
  \vspace{1.5mm}A_{j11} & A_{j12} \\
  A_{j12}^* & A_{j22}
  \end{array}}\right) 
\partial_{x_j}\left({\begin{array}{*{20}c}
  \vspace{1.5mm}\mu \\
  \nu 
\end{array}}\right)\\[2mm]
& + \left({\begin{array}{*{20}c}
  \vspace{1.5mm}A_{11} & A_{12} \\
  A_{12}^* & A_{22}
  \end{array}}\right)\bigg[\frac{1}{\epsilon}
~\partial_y\left({\begin{array}{*{20}c}
  \vspace{1.5mm}\mu_0 \\
  \nu_0
\end{array}}\right) +\frac{1}{\sqrt{\epsilon}}
~\partial_z\left({\begin{array}{*{20}c}
  \vspace{1.5mm}\mu_1 \\
  \nu_1
\end{array}}\right) + 
\partial_z\left({\begin{array}{*{20}c}
  \vspace{1.5mm}\mu_2 \\
  \nu_2
\end{array}}\right) \bigg]\\[2mm]
&-\frac{1}{\epsilon}
\left({\begin{array}{*{20}c}
  \vspace{1.5mm}0 & 0 \\
  0 & S
  \end{array}}\right)\bigg[ 
\left({\begin{array}{*{20}c}
  \vspace{1.5mm}\mu_0 \\
  \nu_0
\end{array}}\right)+ 
\left({\begin{array}{*{20}c}
  \vspace{1.5mm}\mu_1 \\
  \nu_1
\end{array}}\right)+\sqrt{\epsilon} 
\left({\begin{array}{*{20}c}
  \vspace{1.5mm}\mu_2 \\
  \nu_2
\end{array}}\right)\bigg]\\[2mm]
=&~\partial_t\left({\begin{array}{*{20}c}
  \vspace{1.5mm}\mu \\
  \nu 
\end{array}}\right)  +\sum\limits_{j=2}^d \left({\begin{array}{*{20}c}
  \vspace{1.5mm}A_{j11} & A_{j12} \\
  A_{j12}^* & A_{j22}
  \end{array}}\right) 
\partial_{x_j}\left({\begin{array}{*{20}c}
  \vspace{1.5mm}\mu \\
  \nu 
\end{array}}\right)+\left({\begin{array}{*{20}c}
  \vspace{1.5mm}A_{11} & A_{12} \\
  A_{12}^* & A_{22}
  \end{array}}\right)
\partial_z\left({\begin{array}{*{20}c}
  \vspace{1.5mm}\mu_2 \\
  \nu_2
\end{array}}\right).
\end{align*}
Furthermore, we deduce from \eqref{tem3.13} that
\begin{align*} 
\mathcal{L}_2=&\left({\begin{array}{*{20}c}
  \vspace{1mm}0 \\
  \partial_t\nu_{0}+ \sum\limits_{j=2}^d\Big[A_{j12}^*\partial_{x_j}(\mu_0+\mu_1)+A_{j22}\partial_{x_j}\nu_{0}\Big]+A_{12}^*\partial_z\mu_2+A_{22}\partial_z\nu_2
\end{array}}\right)\nonumber \\[2mm]
&+ \sqrt{\epsilon}
\left[
\partial_t\left({\begin{array}{*{20}c}
  \vspace{1.5mm}\mu_2 \\
  \nu_2
\end{array}}\right) +\sum\limits_{j=2}^d
\left({\begin{array}{*{20}c}
  \vspace{1.5mm}A_{j11} & A_{j12}\\
  A_{j12}^* & A_{j22}
  \end{array}}\right)
\partial_{x_j}\left({\begin{array}{*{20}c}
  \vspace{1.5mm}\mu_2 \\
  \nu_2
\end{array}}\right)
\right].
\end{align*}
In summary, we can write 
$$
\mathcal{L}(U_\epsilon)=\mathcal{L}_1+\mathcal{L}_2=\left({\begin{array}{*{20}c}
  \vspace{1.5mm}\sqrt{\epsilon}R_{1}\\
                           R_{2} 
  \end{array}}\right) 
$$
with $R_1$ and $R_2$ clearly defined.  
By the regularity Assumption \ref{asp5.1}, we can see that $R_1,R_2\in L^2([0,T]\times \mathbb{R}^+\times \mathbb{R}^{d-1})$. 
%Moreover, since 
% $$
% \int_{[0,T]\times R^+\times R^{d-1}}\left|\mu_2(t,\frac{x_1}{\sqrt{\epsilon}},\hat{x})\right|^2dtdx_1d\hat{x}=
% \sqrt{\epsilon}\int_{[0,T]\times R^+\times R^{d-1}}\Big|\mu_2(t,z,\hat{x})\Big|^2dtdzd\hat{x},
% $$
% we have $\|R_1\|_{L^2([0,T]\times R^+\times R^{d-1})}\leq C\sqrt{\epsilon}$.
Consequently, the equation for $W$ is 
\begin{align}\label{tem5.5}
\partial_tW+
\sum\limits_{j=1}^d A_j
\partial_{x_j} W=\frac{1}{\epsilon}
\left({\begin{array}{*{20}c}
  \vspace{1.5mm}0 & 0 \\
  0 & S
  \end{array}}\right)W -\left({\begin{array}{*{20}c}
  \vspace{1.5mm}\sqrt{\epsilon}R_{1} \\
  R_{2}
\end{array}}\right).
\end{align}

Next we turn to the boundary and initial conditions for $W$. From the original boundary condition in \eqref{2.1} and \eqref{tem3.26}, 
it is easy to see that the boundary condition for $W$ is 
\begin{eqnarray}\label{tem5.6}
  BW(t,0,\hat{x})=\sqrt{\epsilon}\left({\begin{array}{*{20}c}
  \vspace{1.5mm}\mu_2 \\
  \nu_2
  \end{array}}\right)(t,0,\hat{x})\equiv g(t,\hat{x}).
\end{eqnarray} 
By Assumption \ref{asp5.1}, $\mu_2(t,0,\hat{x}),\nu_2(t,0,\hat{x})\in L^2([0,T]\times \mathbb{R}^{d-1})$ and thereby
\begin{equation*} 
	\|g\|_{L^2([0,T]\times \mathbb{R}^{d-1})}\leq  C\sqrt{\epsilon}.
\end{equation*} 
On the other hand, recall the discussion in Section \ref{subsection5.1} that $U_\epsilon(0,x_1,\hat{x})=U_0(x_1,\hat{x})=U^\epsilon(0,x_1,\hat{x})$ and thereby 
\begin{align}\label{5.9}
W(0,x_1,\hat{x}) \equiv 0.
\end{align}
Combining \eqref{tem5.5}-\eqref{5.9}, we have the following IBVP for $W$:
\begin{align*} 
\left\{
{\begin{array}{*{20}l}
  \vspace{1.5mm}\partial_tW+
\sum\limits_{j=1}^d A_j
\partial_{x_j} W=\dfrac{1}{\epsilon}
\left({\begin{array}{*{20}c}
  \vspace{1.5mm}0 & 0 \\
  0 & S
  \end{array}}\right)W -\left({\begin{array}{*{20}c}
  \vspace{1.5mm}\sqrt{\epsilon}R_{1} \\
  R_{2}
\end{array}}\right), \\[3mm]
BW (t,0,\hat{x})=g(t,\hat{x}),\\[2mm]
W(0,x_1,\hat{x})=0.
\end{array}}
\right.
\end{align*}

To estimate $W$, we follow \cite{GKO} and decompose $W=W_1+W_2$. Here $W_1$ satisfies
\begin{align}\label{5.10}
\left\{
{\begin{array}{*{20}l}
  \vspace{1.5mm}\partial_tW_1+
\sum\limits_{j=1}^d A_j
\partial_{x_j} W_1=\dfrac{1}{\epsilon}
\left({\begin{array}{*{20}c}
  \vspace{1.5mm}0 & 0 \\
  0 & S
  \end{array}}\right)W_1 -\left({\begin{array}{*{20}c}
  \vspace{1.5mm}\sqrt{\epsilon}R_{1} \\
  R_{2}
\end{array}}\right), \\[4mm]
P_+W_1(t,0,\hat{x})=0,\\[2mm]
W_1(0,x_1,\hat{x})=0,
\end{array}}
\right.
\end{align}
and $W_2$ satisfies
\begin{align}\label{5.11}
\left\{
{\begin{array}{*{20}l}
  \vspace{1.5mm}\partial_tW_2+
\sum\limits_{j=1}^d A_j
\partial_{x_j} W_2=\dfrac{1}{\epsilon}
\left({\begin{array}{*{20}c}
  \vspace{1.5mm}0 & 0 \\
  0 & S
  \end{array}}\right)W_2, \\[2mm]
BW_2(t,0,\hat{x})=g(t,\hat{x})-BW_1(t,0,\hat{x}),\\[2mm]
W_2(0,x_1,\hat{x})=0.
\end{array}}\qquad \qquad 
\right.
\end{align}
In \eqref{5.10}, $P_+$ is an $n^+\times n$-matrix consisting of the $n^+$ left-eigenvectors of $A_1$ associated with the positive eigenvalues. Because $A_1$ is symmetric, $P_+$ can
be the first $n^+$ rows of the following orthonormal matrix $P$ satisfying
$$
PA_1P^*=\left({\begin{array}{*{20}c}
  \vspace{1.5mm}\Lambda_+ &  \\
                            & \Lambda_-
\end{array}}\right).
$$
Here $\Lambda_+$ and $\Lambda_-$ are diagonal matrices whose entries are $n^+$ positive eigenvalues and $(n-n^+)$ negative eigenvalues of $A_1$. Corresponding to the partition $\text{diag}(\Lambda_+,\Lambda_-)$, we can write $P=\left({\begin{array}{*{20}c}
   P_+\\
   P_-
\end{array}}\right),$ where $P_-$ is an $(n-n^+)\times n$-matrix. 

For IBVPs \eqref{5.10} and \eqref{5.11}, we have the following conclusions.

\begin{lemma}\label{lemma5.1}
The IBVP \eqref{5.10} has an unique solution $W_1=W_1(t,x_1,\hat{x})$ satisfying
\begin{align}
&\max_{t\in[0,T]}\|W_1(t,\cdot,\cdot)\|_{L^2(\mathbb{R}^+\times \mathbb{R}^{d-1})}^2+ \|W_1|_{x_1=0}\|_{L^2([0,T]\times \mathbb{R}^{d-1})}^2 \nonumber \\[2mm]
\leq & C(T)\epsilon\bigg(\|R_{1}\|^2_{L^2([0,T]\times \mathbb{R}^+\times \mathbb{R}^{d-1})}
+ \|R_{2}\|^2_{L^2([0,T]\times \mathbb{R}^+\times \mathbb{R}^{d-1})}\bigg).\label{5.12}
\end{align}
Here $C(T)$ is a generic constant depending only on $T$.
\end{lemma}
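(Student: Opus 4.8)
The plan is to first settle existence and uniqueness for \eqref{5.10} by the classical theory of symmetric hyperbolic IBVPs with maximally dissipative boundary conditions, and then to prove the quantitative bound \eqref{5.12} by an energy estimate that uses both the dissipativity of the homogeneous boundary condition $P_+W_1=0$ and that of the relaxation term $\mathrm{diag}(0,S)/\epsilon$. For existence and uniqueness, note that the $A_j$ are symmetric, that $A_1$ is invertible with $n^+$ positive eigenvalues, and that $P_+$ consists of the left-eigenvectors of $A_1$ for those eigenvalues; hence $P_+W_1(t,0,\hat x)=0$ prescribes exactly the incoming modes, the boundary energy flux $W_1^{*}A_1W_1$ is $\le 0$ on its kernel, i.e.\ the condition is maximally dissipative. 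Since $R_1,R_2\in L^2([0,T]\times\mathbb R^+\times\mathbb R^{d-1})$ by Assumption \ref{asp5.1} and the initial data vanish, \cite{GKO,BS} provide a unique solution with the interior and boundary-trace regularity needed for the integrations by parts below (alternatively, one carries out the estimate for smooth data and passes to the limit).

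For the estimate, split $W_1=(p,q)^{*}$ according to the block structure $Q=\mathrm{diag}(0,S)$, multiply \eqref{5.10} by $2W_1^{*}$, take the real part, and integrate over $\mathbb R^{+}_{x_1}\times\mathbb R^{d-1}_{\hat x}$. The transverse fluxes $\int\partial_{x_j}(W_1^{*}A_jW_1)$ with $j\ge2$ vanish, while the $x_1$-flux yields the boundary term $W_1^{*}A_1W_1|_{x_1=0}$. Diagonalizing $A_1=P^{*}\mathrm{diag}(\Lambda_+,\Lambda_-)P$ and using $P_+W_1|_{x_1=0}=0$ together with $P_-P_-^{*}=I$ gives
\[
W_1^{*}A_1W_1\big|_{x_1=0}=(P_-W_1)^{*}\Lambda_-(P_-W_1)\big|_{x_1=0}\le -c_\Lambda\big\|W_1|_{x_1=0}\big\|_{L^2(\mathbb R^{d-1})}^2,
\]
with $c_\Lambda>0$ the smallest modulus of an eigenvalue of $\Lambda_-$. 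The relaxation term contributes $\tfrac2\epsilon\,\mathrm{Re}\langle q,Sq\rangle\le -\tfrac{2c_S}\epsilon\|q\|^2$ (with $S\le -c_SI$), and the forcing is absorbed by Young's inequality: $-2\,\mathrm{Re}\langle p,\sqrt\epsilon R_1\rangle\le\|W_1\|^2+\epsilon\|R_1\|^2$ and $-2\,\mathrm{Re}\langle q,R_2\rangle\le\tfrac{c_S}\epsilon\|q\|^2+\tfrac\epsilon{c_S}\|R_2\|^2$. Collecting these,
\[
\frac{d}{dt}\|W_1\|^2+c_\Lambda\big\|W_1|_{x_1=0}\big\|_{L^2(\mathbb R^{d-1})}^2+\frac{c_S}\epsilon\|q\|^2\le\|W_1\|^2+\epsilon\|R_1\|^2+\frac\epsilon{c_S}\|R_2\|^2,
\]
where the spatial norms are over $\mathbb R^+\times\mathbb R^{d-1}$. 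Dropping the two nonnegative terms on the left and applying Grönwall with $W_1(0)=0$ gives $\max_{[0,T]}\|W_1(t)\|^2\le C(T)\epsilon(\|R_1\|_{L^2}^2+\|R_2\|_{L^2}^2)$; then integrating the displayed inequality over $[0,T]$, keeping the boundary term, and inserting this bound for $\int_0^T\|W_1\|^2\,dt$ yields the claimed estimate for $\|W_1|_{x_1=0}\|_{L^2([0,T]\times\mathbb R^{d-1})}^2$.

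The only genuinely delicate point is the bookkeeping of the powers of $\epsilon$ that produces the factor $\epsilon$ on the right-hand side of \eqref{5.12}. The term $R_1$ enters \eqref{5.10} with the explicit prefactor $\sqrt\epsilon$, so it is harmless; the term $R_2$ carries no such prefactor, but it is paired only against the non-equilibrium component $q$, which is controlled by the $O(1/\epsilon)$ relaxation dissipation $\tfrac2\epsilon\langle q,Sq\rangle$. Spending half of that dissipation to absorb $-2\,\mathrm{Re}\langle q,R_2\rangle$ is exactly what turns $\|R_2\|^2$ into $\epsilon\|R_2\|^2$; without exploiting the structural splitting $W_1=(p,q)^{*}$ one would only obtain the weaker bound $O(1)\cdot(\|R_1\|^2+\|R_2\|^2)$. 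A secondary issue to handle with care is that the energy identity is a priori valid only for sufficiently regular $W_1$, which is why the existence step is invoked with the extra regularity furnished by the standard theory (or the estimate is first established for smooth data and then passed to the limit).
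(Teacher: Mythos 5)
Your proposal is correct and follows essentially the same route as the paper: a standard energy estimate in which the boundary flux is controlled via the spectral decomposition of $A_1$ together with $P_+W_1|_{x_1=0}=0$, the $O(1/\epsilon)$ dissipation from $S$ acting on the non-equilibrium block is used to absorb the $R_2$ forcing and produce the factor $\epsilon$, and Gronwall plus a time integration recover both the interior and boundary-trace bounds. The only cosmetic difference is that you justify existence via maximal dissipativity while the paper cites the Uniform Kreiss Condition and the theory in the references, which amounts to the same classical well-posedness result.
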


\begin{lemma}\label{lemma5.2}
The IBVP \eqref{5.11} has an unique solution $W_2=W_2(t,x_1,\hat{x})$ satisfying 
\begin{align}
&\max_{t\in[0,T]}\|W_2(t,\cdot,\cdot)\|_{L^2(\mathbb{R}^+\times \mathbb{R}^{d-1})}^2+ \|W_2|_{x_1=0}\|_{L^2([0,T]\times \mathbb{R}^{d-1})}^2 \nonumber \\[2mm]
\leq & C(T)\bigg(\|W_1|_{x_1=0}\|_{L^2([0,T]\times \mathbb{R}^{d-1})}^2+\|g\|_{L^2([0,T]\times \mathbb{R}^{d-1})}^2\bigg).\label{5.13}
\end{align}
\end{lemma}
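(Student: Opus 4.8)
The plan is to prove \eqref{5.13} in two stages: first a \emph{boundary–trace estimate}
$\|W_2|_{x_1=0}\|_{L^2([0,T]\times\mathbb{R}^{d-1})}^2\le C(T)\big(\|g\|_{L^2}^2+\|W_1|_{x_1=0}\|_{L^2}^2\big)$, obtained from the Fourier–Laplace transform and the GKC, and then the $\max_t$–bound, recovered from this by a standard energy argument. Existence, uniqueness and enough regularity of $W_2$ to justify the computations below are not an issue: the GKC implies the uniform Lopatinski condition, the term $\epsilon^{-1}QW_2$ is a bounded lower–order perturbation for each fixed $\epsilon$, and the classical theory of \cite{GKO} applies. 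The real point is to produce a constant $C(T)$ that does not deteriorate as $\epsilon\to 0$.

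For the trace estimate I would Laplace–transform the interior equation of \eqref{5.11} in $t$ (dual variable $s$, $\mathrm{Re}\,s>0$, using $W_2|_{t=0}\equiv 0$) and Fourier–transform in $\hat x$ (dual variable $\omega$). Since $A_1$ is invertible this turns the homogeneous interior equation into the ODE $\partial_{x_1}\hat W_2 = A_1^{-1}\big(\epsilon^{-1}Q-sI_n-i\sum_{j=2}^d\omega_jA_j\big)\hat W_2 = M(s,\omega,1/\epsilon)\hat W_2$, i.e. exactly the matrix $M(\xi,\omega,\eta)$ of Section \ref{section2} evaluated at $\xi=s$ and $\eta=1/\epsilon\ge 0$. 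By Lemma 2.3 of \cite{Y2}, $M(s,\omega,1/\epsilon)$ has $n^+$ stable eigenvalues, so the solution bounded on $x_1>0$ is $\hat W_2(x_1)=e^{M(s,\omega,1/\epsilon)x_1}R\sigma$ with $R=R_M^S(s,\omega,1/\epsilon)$ normalized to have orthonormal columns and $\sigma\in\mathbb{C}^{n^+}$. The boundary condition forces $BR\sigma=\hat h$, where $h:=g-BW_1|_{x_1=0}$; since $R^*R=I_{n^+}$, the GKC gives $|\det\{BR\}|\ge c_K$, hence $\|(BR)^{-1}\|\le\|B\|^{\,n^+-1}/c_K$ uniformly in $(s,\omega,\epsilon)$, and therefore $|\hat W_2(0)|^2=|\sigma|^2\le C|\hat h|^2$. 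Parseval's identity in $t$ and $\hat x$ then gives $\|e^{-\gamma t}W_2|_{x_1=0}\|_{L^2}^2\le C\|e^{-\gamma t}h\|_{L^2}^2$ for every $\gamma>0$; fixing $\gamma$ (which costs a factor $e^{2\gamma T}$ when passing to unweighted norms on $[0,T]$) and using $\|h\|^2\le 2\|g\|^2+2\|B\|^2\|W_1|_{x_1=0}\|^2$ completes this stage.

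For the $\max_t$–bound I would take the $L^2$–inner product of the interior equation of \eqref{5.11} with $2W_2$ and integrate over $\{x_1>0\}\times\mathbb{R}^{d-1}$. The symmetry of the $A_j$ turns the $x_j$–terms ($j\ge 2$) into vanishing divergences and produces only the boundary term from the $x_1$–derivative, while $Q=\mathrm{diag}(0,S)$ with $S$ negative definite gives $W_2^*QW_2\le 0$; hence $\frac{d}{dt}\|W_2(t,\cdot,\cdot)\|_{L^2(\mathbb{R}^+\times\mathbb{R}^{d-1})}^2\le \|A_1\|\int_{\mathbb{R}^{d-1}}|W_2(t,0,\hat x)|^2\,d\hat x$. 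Integrating in $t$ from $0$, using $W_2|_{t=0}\equiv 0$, and inserting the trace estimate of the previous stage yields \eqref{5.13}.

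The one delicate point is the $\epsilon$–uniformity of the constant in the trace estimate, and I expect it to be the main obstacle. It is resolved by the observation that the relaxation term merely moves the parameter of $M$ to $\eta=1/\epsilon$, which still lies in the range $\eta\ge 0$ covered by the GKC; consequently the determinant lower bound $|\det\{BR\}|\ge c_K$, and hence the bound on the solution map $\hat h\mapsto\hat W_2(0)$, holds with the same constant for all $\epsilon>0$. Note that no interior $L^2$–in–$x_1$ estimate is needed here, since the $\max_t$–norm is recovered entirely from the energy identity together with the boundary trace.
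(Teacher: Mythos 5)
Your proposal is correct and follows essentially the same route as the paper's proof: Fourier--Laplace transformation of the homogeneous problem for $W_2$, reduction of the $L^2$-in-$x_1$ solution to the stable subspace of $M(\xi,\omega,1/\epsilon)$, uniform invertibility of $BR_M^S$ from the GKC (uniform in $\epsilon$ because $\eta=1/\epsilon\ge 0$ is covered by the GKC) to bound the boundary trace, Parseval plus truncation to $[0,T]$, and finally the symmetric-hyperbolic energy identity with $Q\le 0$ to recover the $\max_t$ bound from the trace estimate. The only differences are presentational (orthonormal columns of $R_M^S$ instead of the paper's Schur-based biorthogonal normalization, and keeping $\gamma$ fixed at the cost of $e^{2\gamma T}$ rather than letting $\mathrm{Re}\,\xi\to 0$ and invoking the causality trick of \cite{GKO}).
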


\noindent Recall that $\|(R_1,R_2)\|_{L^2([0,T]\times \mathbb{R}^+\times \mathbb{R}^{d-1})}$ is bounded and $\|g\|_{L^2([0,T]\times \mathbb{R}^{d-1})}\leq  C\sqrt{\epsilon}$. 
The above two lemmas immediately give

\begin{theorem}\label{thm5.3}
Under the assumptions in Section \ref{section2} and Assumption \ref{asp5.1}, there exists a constant $K>0$ such that the error estimate 
\begin{align*}
\|(U^\epsilon-U_\epsilon)(t,\cdot,\cdot)\|_{L^2(\mathbb{R}^+\times \mathbb{R}^{d-1})}\leq K \epsilon^{1/2} 
\end{align*}
holds for all time $t\in[0,T]$.
\end{theorem}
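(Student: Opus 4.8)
The plan is to use the additive splitting $W := U^\epsilon - U_\epsilon = W_1 + W_2$ already introduced in \eqref{5.10}--\eqref{5.11}, control each piece by the matching lemma, and finish with the triangle inequality. The first step is to record that, by the regularity Assumption \ref{asp5.1}, the residuals $R_1$ and $R_2$ — which are assembled from $\bar u$, $(\mu_0,\nu_0)$, $(\mu_2,\nu_2)$ and their first-order derivatives — belong to $L^2([0,T]\times\mathbb{R}^+\times\mathbb{R}^{d-1})$ with norms that do not depend on $\epsilon$. Feeding this into Lemma \ref{lemma5.1}, estimate \eqref{5.12} gives
\[
\max_{t\in[0,T]}\|W_1(t,\cdot,\cdot)\|_{L^2(\mathbb{R}^+\times\mathbb{R}^{d-1})}^2+\|W_1|_{x_1=0}\|_{L^2([0,T]\times\mathbb{R}^{d-1})}^2\le C(T)\,\epsilon,
\]
so both $\|W_1\|$ and its trace on $x_1=0$ are $O(\sqrt\epsilon)$.

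The second step is to insert this bound into Lemma \ref{lemma5.2}. The boundary datum of \eqref{5.11} is $g-BW_1|_{x_1=0}$, where $g=\sqrt\epsilon\,(\mu_2,\nu_2)|_{x_1=0}$ satisfies $\|g\|_{L^2([0,T]\times\mathbb{R}^{d-1})}\le C\sqrt\epsilon$ again by Assumption \ref{asp5.1}, while $\|BW_1|_{x_1=0}\|_{L^2}\le\|B\|\,\|W_1|_{x_1=0}\|_{L^2}=O(\sqrt\epsilon)$ by the previous step. Hence \eqref{5.13} yields $\max_{t\in[0,T]}\|W_2(t,\cdot,\cdot)\|_{L^2}^2\le C(T)\,\epsilon$, and the triangle inequality
\[
\|(U^\epsilon-U_\epsilon)(t,\cdot,\cdot)\|_{L^2(\mathbb{R}^+\times\mathbb{R}^{d-1})}\le\|W_1(t,\cdot,\cdot)\|_{L^2}+\|W_2(t,\cdot,\cdot)\|_{L^2}\le K\,\epsilon^{1/2}
\]
holds uniformly for $t\in[0,T]$ with $K$ depending only on $T$ and the fixed expansion coefficients, which is exactly the assertion.

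So the theorem reduces to Lemmas \ref{lemma5.1} and \ref{lemma5.2}, and these — not their combination — are where the real work lies; I therefore expect them to be the main obstacle. Lemma \ref{lemma5.1} I would prove by a direct $L^2$ energy estimate: multiply \eqref{5.10} by $W_1^*$ and integrate over $\mathbb{R}^+\times\mathbb{R}^{d-1}$; the tangential flux terms vanish by symmetry of the $A_j$, the $x_1$-flux leaves the boundary form $-\tfrac12\,W_1^*A_1W_1\big|_{x_1=0}$ on the left, which is nonnegative and controls $\|W_1|_{x_1=0}\|^2$ because the homogeneous condition $P_+W_1(t,0,\hat x)=0$ removes the components associated with the positive eigenvalues of $A_1$, leaving only the part governed by $\Lambda_-<0$; and the relaxation source contributes the dissipation $-\tfrac{c}{\epsilon}\|v_1\|^2$ since $S$ is negative definite. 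The forcing $(\sqrt\epsilon R_1,R_2)$ is then absorbed by Young's inequality — pairing the explicit factor $\sqrt\epsilon$ with $R_1$ to produce $\epsilon\|R_1\|^2$, and using the $\tfrac1\epsilon$-dissipation to swallow the term involving $R_2$ at the cost of $\epsilon\|R_2\|^2$ — after which Grönwall in $t$ delivers \eqref{5.12}. Lemma \ref{lemma5.2} is the genuinely delicate part: it is a Kreiss-type estimate for the non-characteristic IBVP \eqref{5.11} with homogeneous interior equation, zero initial data and $L^2$ boundary data, and, crucially, with a constant uniform in $\epsilon$. I would establish it by Fourier transform in $\hat x$ and Laplace transform in $t$, reducing \eqref{5.11} to a linear ODE in $x_1$ whose decaying solutions are parametrised by the right-stable subspace of a matrix of the $M(\xi,\omega,\eta)$-type; the GKC then furnishes the uniform lower bound on the boundary determinant $|\det\{BR_M^S\}|$ needed to solve for the transformed trace and bound it by the transformed boundary data, after which Plancherel returns \eqref{5.13}. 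Keeping every constant independent of $\epsilon$ throughout this Fourier--Laplace analysis — in particular handling the singular source $\tfrac1\epsilon\,\mathrm{diag}(0,S)$ by the rescaling of the dual variables that turns it into the parameter $\eta$ in $M(\xi,\omega,\eta)$ — is where the main difficulty sits.
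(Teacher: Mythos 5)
Your proposal is correct and follows essentially the same route as the paper: Theorem \ref{thm5.3} is obtained exactly by combining the decomposition $W=W_1+W_2$ with Lemmas \ref{lemma5.1} and \ref{lemma5.2}, the $\epsilon$-independent bound on $\|(R_1,R_2)\|_{L^2}$, the bound $\|g\|_{L^2}\leq C\sqrt{\epsilon}$, and the triangle inequality. Your sketches of the two lemmas (energy estimate with the dissipation from $S$ and the boundary condition $P_+W_1|_{x_1=0}=0$; Fourier--Laplace reduction with the GKC providing the uniform bound on $(BR_M^S)^{-1}$) also coincide with the paper's arguments.
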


It remains to prove Lemma \ref{lemma5.1} and Lemma \ref{lemma5.2}, which is similar those in \cite{ZY}. For completeness, we present the details here. 

\begin{proof}(Lemma \ref{lemma5.1})
Clearly, the boundary condition $P_+W_1(0,t)=0$ satisfies the Uniform Kreiss Condition. Thus it follows from the existence theory in \cite{BS} that there exists an unique solution $W_1\in C([0,T];L^2(\mathbb{R}^+\times \mathbb{R}^{d-1}))$.

For the estimate \eqref{5.12}, we multiply \eqref{5.10} with $W_1^*$ from the left to obtain
$$
\frac{d}{dt}(W_1^*W_1)+\sum_{j=1}^d(W_1^*A_jW_1)_{x_j}= \dfrac{2}{\epsilon}
W_1^*\left({\begin{array}{*{20}c}
  \vspace{1.5mm}0 & 0 \\
  0 & S
  \end{array}}\right)W_1-2\sqrt{\epsilon}W_1^{I*}R_{1}-2W_1^{II*}R_{2}.
$$
Here $W_1^I$ represents the first $(n-r)$ components of $W_1$ and $W_1^{II}$ represents the other $r$ components. From the last equation, we use the negative-definiteness of $S$ and derive
\begin{align*}
\frac{d}{dt}(W_1^*W_1)+\sum_{j=1}^d(W_1^*A_jW_1)_{x_j}\leq & -\frac{c_0}{\epsilon}|W_1^{II}|^2+ 2|W_1^{II*}R_{2}| + 2\sqrt{\epsilon}|W_1^{I*}R_{1}|\\[2mm]
\leq &  -\frac{c_0}{2\epsilon}|W_1^{II}|^2 + \frac{2 \epsilon}{c_0}|R_{2}|^2 + \epsilon|R_{1}|^2 + |W_1|^2 
\end{align*}
with $c_0 > 0$ a constant. Integrating the last inequality over $(x_1,\hat{x})\in[0,+\infty)\times \mathbb{R}^{d-1}$ yields
\begin{align}
&~\frac{d}{dt}\|W_1(t,\cdot,\cdot)\|_{L^2(\mathbb{R}^+\times \mathbb{R}^{d-1})}^2 - \int_{\mathbb{R}^{d-1}}W_1^*(t,0,\hat{x}) A_1W_1(t,0,\hat{x}) d\hat{x}\nonumber \\
 \leq &~\frac{2 \epsilon}{c_0}\|R_{2}\|_{L^2(\mathbb{R}^+\times \mathbb{R}^{d-1})}^2 + \epsilon\|R_{1}\|_{L^2(\mathbb{R}^+\times \mathbb{R}^{d-1})}^2 + \|W_1\|_{L^2(\mathbb{R}^+\times \mathbb{R}^{d-1})}^2  .\label{5.14}
\end{align}
Since $A_1=P_+^*\Lambda_+P_+ + P_-^*\Lambda_-P_-$ and $P_+W_1(t,0,\hat{x})=0$, it follows that 
\begin{align}
-W_1^*(t,0,\hat{x})A_1W_1(t,0,\hat{x})=& -W_1^*(t,0,\hat{x})(P_+^*\Lambda_+P_+ + P_-^*\Lambda_-P_-)W_1(t,0,\hat{x})\nonumber\\[2mm]
                 =& -W_1^*(t,0,\hat{x}) P_-^*\Lambda_-P_- W_1(t,0,\hat{x}) \nonumber\\[2mm]
                 \geq & ~c_1W_1^*(t,0,\hat{x}) P_-^* P_- W_1(t,0,\hat{x}) \nonumber\\[2mm]
                 =& ~c_1|W_1(t,0,\hat{x})|^2\label{5.15}
\end{align}
with $c_1$ a positive constant.
Applying Gronwall's inequality to \eqref{5.14}, we have 
\begin{align*}
 \max_{t\in[0,T]}\|W_1(t,\cdot,\cdot)\|_{L^2(\mathbb{R}^+\times \mathbb{R}^{d-1})}^2
\leq & ~ Ce^{CT}\epsilon\bigg( \|R_{1}\|^2_{L^2([0,T] \times \mathbb{R}^+\times \mathbb{R}^{d-1})}+ \|R_{2}\|^2_{L^2([0,T] \times \mathbb{R}^+\times \mathbb{R}^{d-1})} \bigg). 
\end{align*}
At last, we use \eqref{5.15} and integrate \eqref{5.14} over $t\in[0,T]$ to obtain
\begin{align*}
\|W_1|_{x_1=0}\|_{L^2([0,T]\times \mathbb{R}^{d-1})}^2
\leq & ~C(T) \epsilon\bigg(\|R_{1}\|^2_{L^2([0,T] \times \mathbb{R}^+\times \mathbb{R}^{d-1})}+ \|R_{2}\|^2_{L^2([0,T] \times \mathbb{R}^+\times \mathbb{R}^{d-1})} \bigg). 
\end{align*}
This completes the proof of Lemma \ref{lemma5.1}.
\end{proof}

Finally, we present a proof of Lemma \ref{lemma5.2}.
\begin{proof}(Lemma \ref{lemma5.2})
By Remark 3.2 in \cite{Y2}, the Uniform Kreiss Condition is implied by the GKC. Thus the existence theory in \cite{BS} indicates that there exists an unique solution $W_2\in C([0,T];L^2(\mathbb{R}^+\times \mathbb{R}^{d-1}))$.

Next we adapt the method in \cite{GKO} to obtain the estimate \eqref{5.13}. Define the Fourier transform of the solution to \eqref{5.11} with respect to $\hat{x}$:
$$
\tilde{U}(t,x_1,\omega)=\int_{\mathbb{R}^{d-1}}e^{i \omega \hat{x}}U(t,x_1,\hat{x})d\hat{x}, \qquad \omega=(\omega_2,\omega_3,...,\omega_d)\in \mathbb{R}^{d-1}
$$
and the Laplace transform with respect to $t$:
$$
\hat{U}(\xi,x_1,\omega)=\int_0^{\infty}e^{-\xi t}\tilde{U}(t,x_1,\omega)dt,\qquad Re \xi>0. 
$$
Then we deduce from \eqref{5.11} that 
\begin{align}\label{5.16}
\left\{
{\begin{array}{*{20}l}
\vspace{2mm}\partial_{x_1}\hat{W_2} =A_1^{-1}\left(\eta Q - \xi I_n -  i \sum_{j=2}^d\omega_j A_j\right)\hat{W_2}\equiv M(\xi,\omega,\eta)\hat{W_2},\\[2mm]
\vspace{2mm} B\hat{W}_2(\xi,0,\omega)=\hat{g}(\xi,\omega)-B\hat{W}_1(\xi,0,\omega),\\[2mm]
\|\hat{W_2}(\xi,\cdot,\omega)\|_{L^2(\mathbb{R}^+)}<\infty \quad \text{for ~a.e.} ~~\xi,~\omega. 
\end{array}}
\right.
\end{align}
Here $\eta = 1/\epsilon$.
% Let $s=\sqrt{\eta^2+|\xi|^2+|\omega|^2}$ and $\xi'=\xi/s$, $\eta'=\eta/s$, $\omega'=\omega/s$, we can write $M(\xi,\omega,\eta)=sM(\xi',\omega',\eta')$. 
Let $R_M^S=R_M^S(\xi,\omega,\eta)$ and $R_M^U=R_M^U(\xi,\omega,\eta)$ be the respective right-stable and right-unstable matrices of $M=M(\xi,\omega,\eta)$:
\begin{align*}
MR_M^S=R_M^SM^S,\qquad\qquad 
MR_M^U=R_M^UM^U,
\end{align*}
where $M^S$ is a stable-matrix and $M^U$ is an unstable-matrix. 
% Notice that $R_M^S(\xi,\omega,\eta)$ is uniformly bounded for $|\xi'|\leq 1$, $|\omega|\leq 1$ and $0 \leq \eta'\leq 1$. 
In view of the Schur decomposition, we may choose $R_M^S$ and $R_M^U$ such that
$$
\left({\begin{array}{*{20}c}
  \vspace{1.5mm} R_M^{S*} \\
                 R_M^{U*}
\end{array}}\right)(R_M^S,\ R_M^U)=I_n.
$$
Then from \eqref{5.16} we obtain
\begin{align*}
\left({\begin{array}{*{20}c}
  \vspace{1.5mm} R_M^{S*} \\
                 R_M^{U*}
\end{array}}\right)\partial_{x_1}\hat{W}_{2}= 
  \left({\begin{array}{*{20}c}
  \vspace{1.5mm}M^S & \\
                    & M^U
\end{array}}\right)\left({\begin{array}{*{20}c}
  \vspace{1.5mm} R_M^{S*} \\
                 R_M^{U*}
\end{array}}\right)\hat{W}_2.
\end{align*}
Since $\|\hat{W}_2(\xi,\cdot,\omega)\|_{L^2(\mathbb{R}^+)}<\infty$ for a.e. $(\xi,\omega)$ and $M^U$ is an unstable-matrix, it must be
\begin{equation*}
   R_M^{U*}\hat{W}_2=0.
\end{equation*}

Thus the boundary condition in \eqref{5.16} becomes
\begin{align*}
B\hat{W}_2(\xi,0,\omega)= BR_M^{S}R_M^{S*}\hat{W}_2(\xi,0,\omega)=&\hat{g}(\xi,\omega)-B\hat{W}_1(\xi,0,\omega). 
\end{align*}
Since the matrix $(BR_M^S)^{-1}$ is uniformly bounded due to the GKC, we conclude that
\begin{align*}
\left|\hat{W}_2(\xi,0,\omega)\right|^2=& \left|R_M^{S}R_M^{S*}\hat{W}_2(\xi,0,\omega)\right|^2\\[2mm]
=&\left|R_M^{S}(BR_M^S)^{-1}\left[\hat{g}(\xi,\omega)-B\hat{W}_1(\xi,0,\omega)\right]\right|^2\\[2mm]
\leq & ~C\left(|\hat{g}(\xi,\omega)|^2+|\hat{W}_1(\xi,0,\omega)|^2\right).
\end{align*}
By Parseval's identity, the last inequality leads to 
\begin{align*} 
&\int_{\mathbb{R}^{d-1}}\int_0^{\infty}e^{-2t Re\xi }|W_2(t,0,\hat{x})|^2dtd\hat{x} \\[2mm]
\leq&  C\bigg(\int_{\mathbb{R}^{d-1}}\int_0^{\infty}e^{-2t Re\xi }\left|g(t,\hat{x})\right|^2dtd\hat{x}+\int_{\mathbb{R}^{d-1}}\int_0^{\infty}e^{-2t Re\xi }\left|W_1(t,0,\hat{x})\right|^2dtd\hat{x}\bigg)\\[2mm]
\leq& C\bigg(\int_{\mathbb{R}^{d-1}}\int_0^{\infty} \left|g(t,\hat{x})\right|^2dtd\hat{x}+\int_{\mathbb{R}^{d-1}}\int_0^{\infty} \left|W_1(t,0,\hat{x})\right|^2dtd\hat{x}\bigg)\quad\text{for}~~Re\xi>0.
\end{align*}
Because the right-hand side is independent of $Re\xi$, we have 
$$
\int_{\mathbb{R}^{d-1}}\int_0^{\infty} |W_2(t,0,\hat{x})|^2dtd\hat{x} \leq C\bigg(\int_{\mathbb{R}^{d-1}}\int_0^{\infty} \left|g(t,\hat{x})\right|^2dtd\hat{x}+\int_{\mathbb{R}^{d-1}}\int_0^{\infty} \left|W_1(t,0,\hat{x})\right|^2dtd\hat{x}\bigg).
$$
By using the trick from \cite{GKO}, the integral interval $[0,\infty)$ in the last inequality can be changed to $[0,T]$:
% According to the classical theory of first-order hyperbolic system, the solution at any finite time $t = T$ does not depend on $g(t,\hat{x})$ and $W_1(t,0,\hat{x})$
% for $t > T$. Therefore, with the boundary data  
% $$
% \bar{g}(t,\hat{x})=\left\{{\begin{array}{*{20}l}
%               g(t,\hat{x}),\quad &t\in[0,T], \\[2mm]
%                  0,\quad &t>T,
% \end{array}}\right. \qquad \bar{W}_1(t,0,\hat{x})=\left\{{\begin{array}{*{20}l}
%               W_1(t,0,\hat{x}),\quad &t\in[0,T], \\[2mm]
%                  0,\quad &t>T,
% \end{array}}\right.
% $$
% the new solution $\bar{W}_2$ satisfies  
% $$
% \int_{\mathbb{R}^{d-1}}\int_0^{\infty} |\bar{W}_2(t,0,\hat{x})|^2dtd\hat{x} \leq C\bigg(\int_{\mathbb{R}^{d-1}}\int_0^{\infty} \left|\bar{g}(t,\hat{x})\right|^2dtd\hat{x}+\int_{\mathbb{R}^{d-1}}\int_0^{\infty} \left|\bar{W}_1(t,0,\hat{x})\right|^2dtd\hat{x}\bigg).
% $$
% Since $W_2=\bar{W}_2$ for $t \leq T$, we have
\begin{align}
\int_{\mathbb{R}^{d-1}}\int_0^{T} |W_2(t,0,\hat{x})|^2dtd\hat{x} 
% \leq& \int_{\mathbb{R}^{d-1}}\int_0^{\infty} |\bar{W}_2(t,0,\hat{x})|^2dtd\hat{x}\nonumber \\[2mm]
% \leq& ~C\bigg(\int_{\mathbb{R}^{d-1}}\int_0^{\infty} \left|\bar{g}(t,\hat{x})\right|^2dtd\hat{x}+\int_{\mathbb{R}^{d-1}}\int_0^{\infty} \left|\bar{W}_1(t,0,\hat{x})\right|^2dtd\hat{x}\bigg)\nonumber \\[2mm]
\leq & ~C \bigg(\|W_1|_{x_1=0}\|_{L^2([0,T]\times \mathbb{R}^{d-1})}^2+\|g\|_{L^2([0,T]\times \mathbb{R}^{d-1})}^2\bigg). \label{5.17}
\end{align}

At last, we multiply the equation in \eqref{5.11} with $W_2^*$ from the left to get
\begin{align*}
\frac{d}{dt}(W_2^*W_{2})+\sum\limits_{j=1}^d \partial_{x_j}(W_2^*A_j
 W_2)\leq 0.
\end{align*}
Integrating the last inequality over $(x_1,\hat{x})\in[0,+\infty)\times \mathbb{R}^{d-1}$ and $t\in[0,T]$ yields
\begin{align*}
\max_{t\in[0,T]}\|W_2(t,\cdot,\cdot)\|_{L^2(\mathbb{R}^+\times \mathbb{R}^{d-1})}^2\leq & ~ C\int_{\mathbb{R}^{d-1}}\int_0^{T} |W_2(t,0,\hat{x})|^2dtd\hat{x}.
\end{align*} 
This together with \eqref{5.17} completes the proof.
\end{proof}
 
% \noindent{\bf Acknowledgments:}
\begin{appendices}
\section{Appendix}\label{AppendA1}
Here we prove that the matrix $\left({\begin{array}{*{20}c}
\vspace{1mm}0 & K^*K \\
K^*K  & K^*AK 
\end{array}}\right)$ has exactly $n_1^o$ positive eigenvalues and $n_1^o$ negative eigenvalues.

Notice that $K^*K$ is symmetric positive definite.
Let $K_A=(K^*K)^{-1/2}(K^*AK)(K^*K)^{-1/2}$. Due to the congruent transformation
\begin{align*}
&\left({\begin{array}{*{20}c}
\vspace{1mm}(K^*K)^{-1/2} & 0 \\
0 &  (K^*K)^{-1/2}
\end{array}}\right)
\left({\begin{array}{*{20}c}
\vspace{1mm}0 & K^*K \\
K^*K  & K^*AK 
\end{array}}\right)
\left({\begin{array}{*{20}c}
\vspace{1mm}(K^*K)^{-1/2} & 0 \\
0 &  (K^*K)^{-1/2}
\end{array}}\right)\\[2mm]
=&\left({\begin{array}{*{20}c}
\vspace{1.5mm}0 & I_{n_1^o}\\
 I_{n_1^o} & K_A
\end{array}}\right),
\end{align*}
it suffices to show that the matrix $\left({\begin{array}{*{20}c}
\vspace{1.5mm}0 & I_{n_1^o}\\
 I_{n_1^o} & K_A
\end{array}}\right)$ has exactly $n_1^o$ positive eigenvalues and $n_1^o$ negative eigenvalues. Clearly, this matrix is invertible and its eigenvalue $\omega$ satisfies 
% \begin{align*}
% 0=\left|\left({\begin{array}{*{20}c}
% \vspace{1.5mm}\omega I  & 0\\
% 0 & \omega I 
% \end{array}}\right)-\left({\begin{array}{*{20}c}
% \vspace{1.5mm}0 & I \\
%  I  & K_A
% \end{array}}\right)\right|=\left|{\begin{array}{*{20}c}
% \vspace{1.5mm}\omega I  & -I \\
% -I  & \omega I - K_A
% \end{array}}\right|=\left|{\begin{array}{*{20}c}
% \vspace{1.5mm}\omega I  & -I \\
% 0 & \omega I - K_A-\omega^{-1}I 
% \end{array}}\right|.
% \end{align*}
% Due to the fact $\omega \neq 0$, we have 
$$
|\omega^2I -\omega K_A-I |=0.
$$
Since $K_A$ is symmetric, there exists an orthonormal matrix $Q_A$ such that $Q_A^*Q_A=I_{n_1^o}$ and $Q_A^*K_AQ_A=\Lambda$ with $\Lambda$ an $n_1^o\times n_1^o$ diagonal matrix. Thus the last equation becomes 
$$
|\omega^2I -\omega \Lambda - I |=0.
$$
It is easy to see that this equation has exactly $n_1^o$ positive solutions and $n_1^o$ negative solutions. This is the proof.

\end{appendices}
 
%%%%%%%%%%%%%%%%%%%%%%%%%%%%%%%%%%%%%%%%%%%%%%%%%%%%%%%%

\end{document}